\newcommand{\isomto}{\overset{\sim}{\rightarrow}}
\newcommand{\CC}{\mathbb{C}}
\newcommand{\ZZ}{\mathbb{Z}}
\newcommand{\QQ}{\mathbb{Q}}
\newcommand{\cO}{\mathcal{O}}
\newcommand{\fp}{\mathfrak{p}}
\newcommand{\fq}{\mathfrak{q}}
\newcommand{\fM}{\mathfrak{M}}
\newcommand{\fW}{\mathfrak{W}}
\newcommand{\fS}{\mathfrak{S}}
\newcommand{\cW}{\mathcal{W}}
\newcommand{\cK}{\mathcal{K}}
\newcommand{\cS}{\mathcal{S}}
\newcommand{\cZ}{\mathcal{Z}}
\newcommand{\cV}{\mathcal{V}}
\newcommand{\cH}{\mathcal{H}}
\DeclareMathOperator{\Hom}{Hom}
\DeclareMathOperator{\Ind}{Ind}
\DeclareMathOperator{\cInd}{c-Ind}
\DeclareMathOperator{\End}{End}
\DeclareMathOperator{\Rep}{Rep}
\DeclareMathOperator{\Frac}{Frac}
\DeclareMathOperator{\Spec}{Spec}
\DeclareMathOperator{\cosoc}{cosoc}
\DeclareMathOperator{\Img}{Im}
\newtheorem{theorem}{Theorem}[section]
\newtheorem{lemma}[theorem]{Lemma}
\newtheorem{proposition}[theorem]{Proposition}
\newtheorem{corollary}[theorem]{Corollary}
\newtheorem{definition}[theorem]{Definition}
\begin{document}

\title{Gamma factors of pairs and a local converse theorem in families}
\author{Gilbert Moss}

\begin{abstract}
We prove a $GL(n)\times GL(n-1)$ local converse theorem for $\ell$-adic families of smooth representations of $GL_n(F)$ where $F$ is a finite extension of $\QQ_p$ and $\ell\neq p$. Along the way, we extend the theory of Rankin-Selberg integrals, first introduced in \cite{jps2}, to the setting of families, continuing previous work of the author \cite{moss1}.
\end{abstract}

\maketitle

\section{Introduction}
Let $F$ be a finite extension of $\QQ_p$. A local converse theorem is a result along the following lines: given $V_1$ and $V_2$ representations of $GL_n(F)$, if $\gamma(V_1\times V',X,\psi)=\gamma(V_2\times V',X,\psi)$ for all representations $V'$ of $GL_{n-1}(F)$, then $V_1$ and $V_2$ are the same. There exists such a converse theorem for complex representations: if $V_1$, $V_2$, and $V'$ are irreducible admissible generic representations of $GL_n(F)$ over $\CC$, ``the same'' means isomorphic (\cite{hen_converse}). It is a conjecture of Jacquet that it should suffice to let $V'$ vary over representations of $GL_{\lfloor\frac{n}{2}\rfloor}(F)$, or in other words a $GL(n)\times GL(\lfloor\frac{n}{2}\rfloor)$ converse theorem should hold. In this paper we construct $\gamma(V\times V',X,\psi)$ and prove a $GL(n)\times GL(n-1)$ local converse theorem in the setting of $\ell$-adic families. We deal with admissible generic families that are not typically irreducible, so ``the same'' will mean that $V_1$ and $V_2$ have the same supercuspidal support. Over families, there arises a new dimension to the local converse problem: determining the smallest coefficient ring over which the twisting representations $V'$ can be taken while still having the theorem hold. Before stating the result, we develop some notation.

A family of $GL_n(F)$-representations is an $A[GL_n(F)]$-module $V$ where $A$ is a Noetherian ring in which $p$ is invertible. The development of the theory is facilitated if $A$ is also a $W(k)$-algebra, where $k$ is an algebraically closed field of characteristic $\ell$, with $\ell\neq p$, and $W(k)$ denotes the Witt vectors (recall that $W(\overline{\mathbb{F}_{\ell}})$ is isomorphic to the $\ell$-adic completion of the ring of integers in the maximal unramified extension of $\QQ_{\ell}$); this is also the setting of Galois deformations. Given $\fp$ in $\Spec(A)$ with residue field $\kappa(\fp):=A_{\fp}/\fp A_{\fp}$, the fiber $V\otimes_A\kappa(\fp)$ gives a representation on a $\kappa(\fp)$-vector space. 

In this paper we consider admissible generic $A[GL_n(F)]$-modules which are co-Whittaker (Definition \ref{defnofcowhitt}). Each fiber of a co-Whittaker family admits a unique surjection onto an absolutely irreducible space of Whittaker functions. Emerton and Helm conjecture the existence of a map from the set of continuous Galois deformations over $W(k)$-algebras $A$ to the set of co-Whittaker $A[G]$-modules (in the setting where $A$ is complete, local with residue field $k$, reduced, and $\ell$-torsion free). Their definition is motivated by global constructions: the smooth dual of the $\ell\neq p$ tensor factor of Emerton's $\ell$-adically completed cohomology \cite{em_lg} is an example of a co-Whittaker module.

The local Langlands correspondence in families is uniquely characterized by requiring that it interpolate (a dualized generic version of) classical local Langlands in characteristic zero \cite[Thm 6.2.1]{eh}. The possibility of characterizing this correspondence using local constants forms one of the initial motivations for this work.

Co-Whittaker modules admit essentially one map into the space $\Ind_N^G\psi$, where $N$ is the subgroup of unipotent upper-triangular matrices, and $\psi$ is a fixed generic character. The image of this map is called the Whittaker space of $V$, denoted $\cW(V,\psi)$. Given $W$ in $\cW(V,\psi)$ and $W'$ in $\cW(V',\psi)$, we define the local Rankin-Selberg formal series $\Psi(W,W',X)$. In the setting of families there is no need to restrict ourselves to the situation where $V$ and $V'$ have the same coefficient ring. Therefore, the base ring is taken to be $R:=A\otimes_{W(k)}B$ where $A$ and $B$ are Noetherian $W(k)$-algebras, $V$ is an $A[GL_n(F)]$-module, and $V'$ is a $B[GL_m(F)]$-module.

Classically the local integrals form elements of $\CC(q^{-s})$ where $q$ is the order of the residue field of $F$. In this paper we replace the complex variable $q^{-s+\frac{n-m}{2}}$ with a formal variable $X$ and use purely algebraic methods. Our coefficient ring $R$ may not be a domain, so the analogue of $\CC(q^{-s})$ is more subtle. As in \cite{moss1}, the formal series $\Psi(W,W',X)$ will define an element of the fraction ring $S^{-1}(R[X,X^{-1}])$ where $S$ is the multiplicative subset of $R[X,X^{-1}]$ consisting of polynomials whose first and last coefficients are units; this is proved in \S \ref{rationalitysection}. This ring enables us to compare the objects on either side of a functional equation, which is proved in \S \ref{functionalequationsection}. The proofs of rationality and the functional equation follow the same overall pattern as the results for the $GL(n)\times GL(1)$ case, which is the subject of \cite{moss1}. In the functional equation for $\Psi(W,W',X)$, there is a term which remains constant as $W$ and $W'$ vary; this is the gamma factor $\gamma(V\times V',X,\psi)$.

In \S \ref{notationconventions} we give a definition of supercuspidal support for co-Whittaker families, and show that two co-Whittaker families have the same supercuspidal support if and only if they have the same Whittaker space. Our main theorem then states that gamma factors uniquely determine supercuspidal support:

\begin{theorem}
\label{intromainthm}
Let $A$ be a finite-type $W(k)$-algebra which is reduced and $\ell$-torsion free, and let $\cK:=\Frac(W(k))$. Suppose $V_1$ and $V_2$ are two co-Whittaker $A[GL_n(F)]$-modules. There is a finite extension $\cK'$ of $\cK$ such that, if $\gamma(V_1\times V',X,\psi) = \gamma(V_2\times V',X,\psi)$ for all absolutely irreducible generic integral representations $V'$ of $GL_{n-1}(F)$ over $\cK'$, then $V_1$ and $V_2$ have the same supercuspidal support (equivalently, $\cW(V_1,\psi)=\cW(V_2,\psi)$).
\end{theorem}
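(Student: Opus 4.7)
The plan is to reduce to the classical $GL(n)\times GL(n-1)$ local converse theorem of Henniart applied fiberwise over characteristic zero residue fields. Because $A$ is reduced, Noetherian, and $\ell$-torsion free over $W(k)$, the diagonal embedding $A\hookrightarrow \prod_{\fp} A/\fp$, where $\fp$ ranges over the finitely many minimal primes, is injective, and each residue field $\kappa(\fp)=\Frac(A/\fp)$ is a field of characteristic zero extending $\cK$. Combined with the equivalence (already noted above) between equality of Whittaker spaces and equality of supercuspidal support for co-Whittaker modules, the strategy is to detect supercuspidal support at the minimal primes, so that the task reduces to showing $(V_1)_{\fp}$ and $(V_2)_{\fp}$ have isomorphic cosocles for every minimal $\fp$.

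Because the Rankin-Selberg integrals and the functional equation commute with base change on the coefficient ring, the hypothesis transports cleanly: for any integral absolutely irreducible generic $V'$ over $\cK'$, the fibered gamma factors $\gamma((V_1)_{\fp}\times V'_{\fp}, X,\psi)$ and $\gamma((V_2)_{\fp}\times V'_{\fp}, X,\psi)$ agree. To invoke Henniart over each $\kappa(\fp)$, one needs twists by sufficiently many absolutely irreducible generic representations over $\kappa(\fp)$. The idea is to choose $\cK'$ so that, after base change to each $\kappa(\fp)$ and passage to the cosocle, the twists coming from $\cK'$ are rich enough for Henniart. A uniform conductor bound on $(V_i)_{\fp}$, coming from finite generation of $V_i$ as an $A[GL_n(F)]$-module together with admissibility, should restrict Henniart's test twists to a finite set of supercuspidal supports. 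Each representative of this finite set is defined over some finite extension of $\cK$; take $\cK'$ to be a common finite extension containing all of them.

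With such $\cK'$, Henniart's theorem applied fiberwise yields $\cosoc((V_1)_{\fp})\cong \cosoc((V_2)_{\fp})$ for every minimal $\fp$. Translating back through the co-Whittaker formalism, this gives equality of supercuspidal supports at each minimal prime, and injectivity of $A\hookrightarrow \prod A/\fp$ then promotes the equality to a global statement for $V_1$ and $V_2$, hence equality of Whittaker spaces.

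The main obstacle I anticipate is constructing a single finite extension $\cK'/\cK$ that works uniformly across all minimal primes. This needs a descent lemma asserting that every absolutely irreducible generic representation of $GL_{n-1}(F)$ over $\kappa(\fp)$ of bounded conductor arises as the cosocle of $V'_0 \otimes_{\cK'} \kappa(\fp)$ for some integral $V'_0$ defined over $\cK'$, uniformly in $\fp$. Establishing this descent, together with the uniform conductor bound and a precise fiberwise criterion for supercuspidal support of co-Whittaker families, is where I expect most of the technical work to lie; a secondary subtlety is confirming that integrality and genericity are preserved under the required field enlargement and base changes.
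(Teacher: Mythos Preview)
Your fiber-by-fiber reduction to Henniart has a genuine gap: the descent lemma you anticipate is false. When $A$ is, say, $W(k)[t]$, the minimal prime $(0)$ has residue field $\kappa(\fp)=\cK(t)$, a transcendental extension of $\cK$. The unramified character of $F^\times$ sending a uniformizer to $t$ is absolutely irreducible, generic, integral, and of conductor zero, yet it cannot be obtained as a base change from any representation defined over a finite extension $\cK'/\cK$. More generally, unramified twists over $\kappa(\fp)$ are parametrized by $\kappa(\fp)^\times$, and for transcendental $\kappa(\fp)$ only countably many of these come from a fixed $\cK'$. So a conductor bound does not help: Henniart's theorem over $\kappa(\fp)$ genuinely needs twists that do not descend, and the hypothesis of the theorem gives you no control over gamma factors against such twists.

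The paper does not reduce to fibers at all. Instead it runs Henniart's argument \emph{globally} over $A$: one shows that if $W_1,W_2$ are Whittaker functions in the Schwartz part with equal restriction to $P_n$, then the functional equation plus the gamma hypothesis forces equality of the dual-side integrals against every test $W'$, and then a vanishing lemma (the paper's Theorem~\ref{rikka2}) shows this forces $W_1=W_2$. The finite extension $\cK'$ arises not from a descent of test representations from $\kappa(\fp)$, but from the geometry of the integral Bernstein center: one shows that $\cO'$-valued points (for $\cO'$ the integers of a suitable finite $\cK'$, depending only on the Bernstein block) are Zariski dense in the generic fiber of $\Spec(e\cZ)$ for $GL_{n-1}$, and that the universal co-Whittaker module is generically absolutely irreducible. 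These two facts together furnish enough integral test representations $V'$ over $\cK'$ to detect any nonzero $H\in\cInd_N^G\psi_A$, which is exactly what the Henniart-style argument needs. The reducedness, flatness, and finite-type hypotheses on $A$ enter in making the locus where $H$ survives an open set whose image in $\Spec(e\cZ)$ meets this dense set of $\cO'$-points.
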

Recall that a $\cK'[G]$-module $V'$ is integral if it admits a $G$-stable $\cO_{\cK'}$-lattice, where $\cO_{\cK'}$ is the ring of integers of $\cK'$.

Thus, in the reduced and $\ell$-torsion free setting, our converse theorem shows it suffices to take the coefficient ring of the twisting representations $V'$ to be no larger than the ring of integers in a finite extension of $\cK$. The hypothesis that $A$ is a finite-type, reduced, and $\ell$-torsion free enables us to implement a key vanishing lemma, Theorem \ref{rikka2}, which is explained below. If $V_1$ and $V_2$ live within a single block of the category $\Rep_{W(k)}(GL_n(F))$, the finite extension $\cK'$ appearing in our converse theorem depends only on this block. Finding the smallest possible extension $\cK'$ for each block will be the subject of future investigation.

If $E$ is a finite extension of $\cK$, and $V$ is an absolutely irreducible generic integral representation of $GL_n(F)$ over $E$, then in particular it has a sublattice $L$ which is co-Whittaker \cite[3.3.2 Prop]{eh}, \cite[I.9.7]{vig}, and the supercuspidal support of $L$ determines $V$ up to isomorphism. Thus our converse theorem gives as a special case the following integral converse theorem:
\begin{corollary}
Let $V_1$, $V_2$ be two absolutely irreducible generic integral representations of $GL_n(F)$ over $E$. There is a finite extension $\cK'/\cK$ such that if $\gamma(V_1\times V',X,\psi)=\gamma(V_2\times V',X,\psi)$ for all absolutely irreducible generic integral representations $V'$ of $GL_{n-1}(F)$ over $\cK'$, then $V_1\cong V_2$.
\end{corollary}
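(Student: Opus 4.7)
The plan is to deduce the corollary from Theorem \ref{intromainthm} by replacing each $V_i$ with a co-Whittaker $\cO_E$-lattice. By \cite[3.3.2 Prop]{eh} and \cite[I.9.7]{vig}, each $V_i$ contains a $GL_n(F)$-stable $\cO_E$-lattice $L_i$ that is a co-Whittaker $\cO_E[GL_n(F)]$-module. The ring $\cO_E$ is module-finite, hence of finite type, over $W(k)$; as a domain of characteristic zero it is reduced and $\ell$-torsion free. Thus $L_1$ and $L_2$ satisfy the hypotheses of Theorem \ref{intromainthm}, which produces a finite extension $\cK'/\cK$.

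Suppose $\gamma(V_1 \times V', X, \psi) = \gamma(V_2 \times V', X, \psi)$ for every absolutely irreducible generic integral representation $V'$ of $GL_{n-1}(F)$ over $\cK'$. Since $V_i = L_i \otimes_{\cO_E} E$, functoriality of the Rankin-Selberg construction under scalar extension identifies $\gamma(V_i \times V', X, \psi)$ with the image of $\gamma(L_i \times V', X, \psi)$ under the map of gamma rings induced by $\cO_E \otimes_{W(k)} \cK' \hookrightarrow E \otimes_{W(k)} \cK'$. Flatness of $\cK'$ over $W(k)$ makes this inclusion injective, and one checks that passing to the localization at polynomials with unit leading and trailing coefficients remains injective. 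Hence the assumed equality descends to equality of $\gamma(L_1 \times V', X, \psi)$ and $\gamma(L_2 \times V', X, \psi)$, and Theorem \ref{intromainthm} forces $L_1$ and $L_2$ to have the same supercuspidal support. As recalled in the paragraph preceding the corollary, the supercuspidal support of $L_i$ determines $V_i$ up to isomorphism for absolutely irreducible generic integral representations, so $V_1 \cong V_2$.

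I expect the main obstacle to be the base-change comparison in the middle paragraph: one must verify that the construction of $\gamma$ from \S \ref{rationalitysection} and \S \ref{functionalequationsection} commutes with the localization $\cO_E \to E$, and that the multiplicative set $S$ behaves well under the inclusion $\cO_E \otimes_{W(k)} \cK' \hookrightarrow E \otimes_{W(k)} \cK'$. Once this compatibility is established, the remaining steps are immediate from Theorem \ref{intromainthm} and the facts quoted in the introduction.
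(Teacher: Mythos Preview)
Your approach is correct and matches the paper's own reasoning: the corollary is presented there as an immediate special case of Theorem \ref{intromainthm}, obtained by passing to co-Whittaker $\cO_E$-lattices $L_i\subset V_i$ (using \cite[3.3.2 Prop]{eh}, \cite[I.9.7]{vig}) and noting that the supercuspidal support of $L_i$ determines $V_i$. The paper does not spell out the base-change comparison you flag in your second paragraph; your treatment of that point via compatibility of the gamma factor with scalar extension (as in \cite[\S 4.2]{moss1}, also invoked in the proof of Theorem \ref{mainthm2}) is exactly what is needed and is more explicit than the paper itself.
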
 

In Section \ref{conversetheoremsection} we prove Theorem \ref{intromainthm} by employing the functional equation, following the method of \cite{hen_converse} and \cite[Thm 7.5.3]{jps1}.

In proving the converse theorem, there is a key vanishing lemma (Theorem \ref{rikka2} in this paper) that is well-known in the setting of complex representations, but is more difficult in families. It says that given any smooth compactly supported function $H$ on $GL_n(F)$ with $H(ng)=\psi(n)H(g)$, $n\in N$, the vanishing of $H$ can be detected by the convolutions of $H$ with the Whittaker functions of a sufficiently large collection of representations. This result was originally proven over $\CC$ in \cite[Lemme 3.5]{jps4} by using harmonic analysis to decompose a representation as the direct integral of irreducible representations. A purely algebraic analogue of this decomposition was obtained in \cite{bh_whitt} by viewing the representation as a sheaf on the spectrum of the Bernstein center. As an application of these algebraic techniques, a new proof of this vanishing lemma (over $\CC$) is given over in \cite{bh_whitt}. It has been observed in the $\ell$-modular setting in \cite{vig_bern, vig_whitt}, and more recently in the integral setting in \cite{h_bern, h_whitt} that Bernstein's algebraic approach to Fourier theory and Whittaker models applies to representations over coefficient rings other than $\CC$. In Section \ref{proofofvanishingthm} we apply the theory of the integral Bernstein center, developed in \cite{h_bern, h_whitt}, to prove the key vanishing lemma (and thus the converse theorem) in the case when $A$ is a flat finite-type reduced $W(k)$-algebra. The geometric methods in Section \ref{proofofvanishingthm}  require $A$ to be reduced in order that a certain subset $D$ of desirable points is open (Lemmas \ref{torsionfree}, \ref{Bexists}); $A$ is required to be finite and flat over $W(k)$ so that a certain structure map is open, and the image of $D$ in $\Spec(\cZ)$ intersects a dense set.

Converse theorems in the complex setting have a long history dating back to Hecke, and for $GL(n)$ in the local setting have been studied in \cite{jacquet_langlands,jps1,jps2,hen_converse,cogshap_converse_II,chen, JNS}, among others.

The possibility of characterizing the mod-$\ell$ local Langlands correspondence via a converse theorem for $\ell$-modular local constants has been investigated in \cite{vig_epsilon} for the supercuspidal case when $n=2$. The Rankin-Selberg convolutions in this paper expand on recent results on Rankin-Selberg convolutions in the $\ell$-modular setting in \cite{km}. However, in $\ell$-adic families, the analogue of the $L$-factor does not seem to behave well \cite[\S 0]{moss1}, which is why we focus at present only on the local integrals $\Psi(W,W',X)$ and the gamma factor. In the $\ell$-modular setting, where $A=k$, it appears that the approach of \cite{bh_whitt} to the key vanishing lemma would require techniques able to handle $\ell$-torsion, going beyond those presented here.

We assume $F$ has characteristic zero because at present this is the only setting in which the theory of the integral Bernstein center, which we rely on, exists at present.

The methods of this paper can be adapted to show that the coefficients of the universal Rankin-Selberg gamma factor provide a set of generators for the integral Bernstein center. A proof of this will appear in forthcoming work.

\subsection{Acknowledgements}
The author is very grateful to his advisor David Helm for his guidance and support. He would like to thank Keenan Kidwell and Cory Colbert for many helpful conversations on commutative algebra, Travis Schedler for several helpful comments, and James Cogdell, Guy Henniart, Robert Kurinczuk, and Nadir Matringe for their continued interest in these results.

\section{Notation and Definitions}
\label{notationconventions}
Let $F$ be a finite extension of $\QQ_p$, let $q$ be the order of its residue field, and let $k$ be an algebraically closed field of characteristic $\ell$, where $\ell\neq p$ is an odd prime.  $\cO_F$ will denote the ring of integers in $F$, $U_F$ will denote $\cO_F^{\times}$, and $\varpi$ will denote a uniformizer. The letter $G$ or $G_n$ will always denote the group $GL_n(F)$.  We will denote by $W(k)$ the ring of Witt vectors over $k$. The assumption that $\ell$ is odd is made so that $W(k)$ contains a square root of $q$. When $\ell=2$ all the arguments presented will remain valid, after possibly adjoining a square root of $q$ to $W(k)$. $A$ is always a Noetherian commutative ring which is a $W(k)$-algebra, with additional ring theoretic conditions in various sections of the paper, and for a prime $\fp$ we denote by $\kappa(\fp)$ the residue field $A_{\fp}/\fp A_{\fp}$.  

For a group $H$, we denote by $\Rep_A(H)$ the category of smooth representations of $H$ over the ring $A$, i.e. $A[H]$-modules for which every element is stabilized by an open subgroup of $H$. We will sometimes drop the subscript and write $\Rep(H)$ to mean $\Rep_A(H)$, and even when this category is not mentioned, all representations are presumed to be smooth. An $A[H]$-module is admissible if for every compact open subgroup $U$, the set of $U$-fixed vectors is finitely generated as an $A$-module. 

If $V$ is a smooth representation of $H$ over a ring $A$, and $\theta:H\rightarrow A^{\times}$ is a smooth character, we denote by $V_{H,\theta}$ the quotient $V/V(H,\theta)$ where $V(H,\theta)$ is the sub-$A$-module generated by elements of the form $hv-\theta(h)v$ for $h\in H$ and $v\in V$. Given a representation $\sigma$ of a closed subgroup $K$ we define the induction $\Ind_K^H\sigma$ to be the $H$-module (by right translation) of functions $f:H\rightarrow \sigma$ satisfying $f(kh)=\sigma(k)f(h)$, $k\in K$, $h\in H$, and which are invariant under right translation by a compact open subgroup of $H$. The module $\cInd_K^H\sigma$ consists of those function in $\Ind_K^H\sigma$ which are compactly supported modulo $K$.

Given a standard parabolic subgroup $P$ of $GL_n(F)$ (i.e. a subgroup consisting of block upper triangular matrices), it has a unipotent radical $M$ (of strictly block upper triangular matrices) such that $P=LM$ for a subgroup $L$, a standard Levi subgroup, of block diagonal matrices. The functor $V\mapsto V_{L,\textbf{1}}$ (after restricting to $P$) is called the Jacquet functor associated to $L$, and we denote this functor by $J_L$. An $A[G]$-module $V$ is called \textsl{cuspidal} if $J_LV=0$ for all Levi subgroups $L\neq G$. $J_L$ has a right adjoint, given by parabolic induction, which takes a representation $V\in \Rep_A(L)$, inflates it to a representation of $P=LM$ by letting $M$ act trivially, and then taking the compactly induced representation $\cInd_P^GV$. This functor is denoted $i_P^G$. Both $i_P^G$ and $J_L$ are defined when $G$ is an arbitrary reductive group.

Given $V\in \Rep_A(G)$, this adjunction implies \cite[II.2.3]{vig} that $V$ is cuspidal if and only if all $G$-homomorphisms from $V$ to a parabolic induction $i_P^GW$ are zero for all $W\in \Rep(L)$, $L\neq G$. If $A$ is a field, then a simple $A[G]$-module is called \textsl{supercuspidal} if it is not isomorphic to a subquotient of $i_P^GW$ for any $W\in \Rep(L)$, $L\neq G$. If $A$ is a field of characteristic zero, cuspidal representations are supercuspidal.

\begin{definition}
$V$ in $\Rep_A(G)$ will be called $G$-finite if it is finitely generated as an $A[G]$-module.
\end{definition}

We denote by $N_n$, or just $N$, the subgroup of $G_n$ consisting of all unipotent upper-triangular matrices. Let $\psi: F\rightarrow W(k)^{\times}$ be a nontrivial additive character with open kernel. For a $W(k)$-algebra $A$, $\psi_A$ will denote $\psi\otimes_{W(k)}A$. $\psi$ defines a character on any subgroup of $N_n$ by $(u)_{i,j}\mapsto \psi(u_{1,2}+\dots+u_{n-1,n})$. We will abusively denote this character by $\psi$ as well.
\begin{definition}
For $V$ in $\Rep_A(G_n)$, we say that $V$ is of Whittaker type if $V_{N,\psi}$ is free of rank one as an $A$-module. We say $V$ is generic if $V_{N,\psi}$ is nonzero.
\end{definition}

If $V$ is of Whittaker type, $\Hom_A(V/V(N_n,\psi),A) = \Hom_{N_n}(V,\psi)$ is free of rank one, so we may choose a generator $\lambda$ in $\Hom_{N_n}(V,\psi)$.  For any $v$ in $V$, define $W_v\in \Ind_{N_n}^{G_n}\psi$ as $W_v: g\mapsto \lambda(gv)$.  This is called a Whittaker function and has the property that $W(nx) = \psi(n)W(x)$ for $n\in N_n$.  $v\mapsto W_v$ defines a $G_n$-equivariant homomorphism $V\rightarrow \Ind_{N_n}^{G_n}\psi$.  The image is an $A[G]$-module independent of the choice of $\lambda$.  The map $v\mapsto W_v$ is precisely the generator of $\Hom_{G_n}(V,\Ind_{N_n}^{G_n}\psi)$ corresponding to the generator $\lambda$ of $\Hom_{N_n}(V,\psi)$ under Frobenius reciprocity. The image of the homomorphism $v\mapsto W_v:V\rightarrow \Ind_{N_n}^{G_n}\psi$ is called the space of Whittaker functions of $V$ and is denoted $\cW(V,\psi)$ or just $\cW$.

Note that the map $V\rightarrow \cW(V,\psi)$ is surjective but not necessarily an isomorphism, and different $A[G]$-modules of Whittaker type can have the same space of Whittaker functions \cite[see][Lemma 1.6]{moss1}.

For each $m\leq n$, we let $G_m$ denote $GL_m(F)$ and embed it in $G$ via $(\begin{smallmatrix}G_m & 0 \\ 0 & I_{n-m}\end{smallmatrix})$.  We let $\{1\} = P_1 \subset \cdots \subset P_n$ denote the mirabolic subgroups of $G_1\subset \cdots \subset G_n$, which are given $P_m:=\left\{(\begin{smallmatrix} g_{m-1} & x \\ 0 & 1\end{smallmatrix}):g_{m-1}\in G_{m-1},\ x\in F^{m-1}\right\}$.  We also have the unipotent upper triangular subgroup $U_m := \left\{(\begin{smallmatrix} I_{m-1} & x \\ 0 & 1\end{smallmatrix}):x\in F^{m-1}\right\}$ of $P_m$ such that $P_m = U_mG_{m-1}$. We will sometimes write $P$ for $P_n$.

We can repeat the Whittaker functions construction for the restriction to $P_n$ of representations $V$ in $\Rep(G_n)$ of Whittaker type.  In particular, by restricting the argument of the Whittaker functions $W_v$ to elements of $P_n$, we get a $P_n$-equivariant homomorphism $V\rightarrow \Ind_{N_n}^{P_n}\psi$. The image of the homomorphism $V\rightarrow \Ind_{N_n}^{P_n}\psi:v\mapsto W_v$ is called the Kirillov functions of $V$ and is denoted $\cK(V,\psi)$ or just $\cK$.  It carries a representation of $P_n$ via $pW_v = W_{pv}$. 

Following \cite{b-zI}, we define the functor $\Phi^+:\Rep_A(P_{n-1})\rightarrow \Rep(P_n)$ which sends $V$ to $\Ind_{P_{n-1}U_n}^{P_n}V$ where $U_n$ acts via $\psi$, and the functor $(-)^{(n)}$, which sends $V$ to $V_{N,\psi}$.

There is a particularly important $P_n$-representation that naturally embeds in the restriction to $P_n$ of any Whittaker type representation $V$ in $\Rep(G_n)$:
\begin{definition}
\label{schwartzdefn}
If $V$ is in $\Rep(P_n)$, the $P_n$ representation $(\Phi^+)^{n-1}V^{(n)}$ is called the Schwartz functions of $V$ and is denoted $\cS(V)$.
\end{definition}

The Bernstein center $\mathcal{Z}$ is defined as the ring of endomorphisms of the identity functor in the category $\Rep_{W(k)}(G)$. It is a commutative ring whose elements consist of collections of compatible endomorphisms, one for each object (compatible in the sense of commuting with all morphisms in the category).  Analogous to the results of \cite{bd} for smooth representations over $\CC$, \cite[Thm 10.8]{h_bern} shows that the category $\Rep_{W(k)}(G)$ has a decomposition into full subcategories known as blocks. Given a primitive idempotent $e$ of $\mathcal{Z}$, the subcategory $e\cdot \Rep_{W(k)}(G)$, consisting of representations satisfying $eV=V$, is a block, and the ring $e\mathcal{Z}$ is its center. $e\cZ$ has an interpretation as the ring of regular functions on an affine algebraic variety over $W(k)$, whose $k$-points are in bijection with the set of unramified twists of a fixed conjugacy class of supercuspidal supports in $\Rep_k(G)$. Note that, since $\cZ$ is an infinite product of the rings $e\cZ$, it is not Noetherian.

If $A$ is a $W(k)$-algebra and $V$ is an $A[G]$-module, then $V$ is also a $W(k)[G]$-module, and we use the Bernstein decomposition of $\Rep_{W(k)}(G)$ to study $V$.
\begin{definition}
An object $V$ in $\Rep_A(G)$ is called primitive if there exists a primitive idempotent $e$ in the Bernstein center $\cZ$ such that $eV = V$.
\end{definition}

It is possible to define a Haar measure on the space $ C_c^{\infty}(G,A)$ of smooth compactly supported functions $G\rightarrow A$ by choosing a filtration $\{H_i\}_{i\geq 1}$ of compact open neighborhoods of $1$ in $G$ such that $[H_1:H_i]$ is invertible in $W(k)$.  Then we can set $\mu^{\times}(H_i) = [H_1:H_i]^{-1}$.  This automatically defines integration on the characteristic functions of the $H_i$, and one checks that extending by linearity gives a well-defined Haar integral on $C_c^{\infty}(G,A)$. Normalizing the Haar measure to be compatible with decompositions of the group requires care when $A$ has characteristic $\ell$; this is dealt with in \cite[\S 2.2]{km}.

\section{Rationality of Rankin-Selberg Formal Series}
\label{rationalitysection}
Let $A$ and $B$ be Noetherian $W(k)$-algebras and let $R = A\otimes_{W(k)}B$. Let $V$ and $V'$ be $A[G_n]$- and $B[G_m]$-modules respectively, where $m<n$, and suppose both $V$ and $V'$ are of Whittaker type. For $W\in \cW(V,\psi)$ and $W'\in \cW(V', \psi)$, we define the formal series with coefficients in $R$:
$$\Psi(W,W',X):=\sum_{r\in \ZZ}\int_{N_m\backslash\{g\in G_m:v(\det g)=r\}}\left(W(\begin{smallmatrix}g&0\\0&I_{n-m}\end{smallmatrix})\otimes W'(g)\right)X^rdg$$
and for $0\leq j \leq n-m-1$, define
\begin{align*}
\Psi(W,W',X;j):=\sum_{r\in \ZZ}\int_{M_{j,m}(F)}\int_{N_m\backslash\{g\in G_m:v(\det g)=r\}}                    \left(W\left(\begin{smallmatrix}g &&\\x&I_j &\\&&I_{n-m-j} \end{smallmatrix}\right)\otimes W'(g)\right)X^rdg dx
\end{align*}
With $\Psi(W,W',X;0) = \Psi(W,W',X)$.

\begin{lemma}
\label{finitelymanynegativeterms}
The formal series $\Psi(W,W',X;j)$ has finitely many nonzero positive powers of $X^{-1}$, thus forms an element of $R[[X]][X^{-1}]$.
\end{lemma}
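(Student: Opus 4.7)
The plan is to adapt the Jacquet-Piatetski-Shapiro-Shalika rationality argument to the $\ell$-adic family setting, following the same pattern used for the $GL(n)\times GL(1)$ case in \cite{moss1}. The key input is a support estimate for Whittaker functions on the diagonal torus: there exists $N>0$, depending on $W\in\cW(V,\psi)$, such that $W(\mathrm{diag}(a_1,\ldots,a_n)) = 0$ whenever $v(a_i/a_{i+1}) < -N$ for some $i$. This estimate translates into a lower bound on $r = v(\det g)$, below which each coefficient of $X^r$ in $\Psi(W,W',X;j)$ vanishes.

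To establish the support estimate, I would choose a level $L$ such that $W$ is right-invariant under $I + \varpi^L\cO_F \cdot E_{i,i+1}$ for every $i$. The elementary computation $\mathrm{diag}(a)(I + xE_{i,i+1})\mathrm{diag}(a)^{-1} = I + x(a_i/a_{i+1})E_{i,i+1}\in N_n$ together with left $(N,\psi)$-equivariance of $W$ yields
\[
\bigl(\psi_A(x a_i/a_{i+1}) - 1\bigr)\cdot W(\mathrm{diag}(a)) = 0 \quad \text{for every } x\in\varpi^L\cO_F.
\]
The ring-theoretic heart of the argument is that, since $\ell\neq p$, every $p$-power root of unity $\zeta\in W(k)$ satisfies $\zeta - 1\in W(k)^\times\cup\{0\}$: indeed, $\Phi_{p^n}(1) = p$ is a unit in $W(k)$ when $\ell\neq p$, and since $W(k)$ is a DVR each factor $1-\zeta$ in $\prod_\zeta (1-\zeta) = p$ must itself be a unit. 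Consequently $\psi_A(y) - 1$ is either zero or a unit in $A$ for every $y\in F$, and it is a unit precisely when $\psi(y)\neq 1$. One checks that such $y$ exists in $(a_i/a_{i+1})\varpi^L\cO_F$ exactly when $v(a_i/a_{i+1}) < c - L$, where $c$ is the conductor of $\psi$, forcing $W(\mathrm{diag}(a)) = 0$ throughout that range.

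Applying the estimate to the embedded diagonal $\mathrm{diag}(a_1,\ldots,a_m,1,\ldots,1)\in G_n$, the inequality at index $m$ reads $v(a_m)\geq -N$, and descending induction using $v(a_i/a_{i+1})\geq -N$ for $i<m$ yields $v(a_i)\geq -(m-i+1)N$. Summing produces $r = v(\det g)\geq -Nm(m+1)/2$, so for $r$ below this threshold every integrand vanishes. Via the Iwasawa decomposition $g = ak$ with $k\in K_m = GL_m(\cO_F)$, compactness of $K_m$ together with smoothness of $W'$ imply that, for each $r$ above the threshold, the inner integral reduces to a finite sum and defines an element of $R$. For $j>0$ the factorization
\[
\begin{pmatrix}g & & \\ x & I_j & \\ & & I_{n-m-j}\end{pmatrix} = \begin{pmatrix}g & & \\ & I_j & \\ & & I_{n-m-j}\end{pmatrix}\begin{pmatrix}I_m & & \\ x & I_j & \\ & & I_{n-m-j}\end{pmatrix}
\]
expresses the $x$-dependence as right translation by a lower unipotent; smoothness of $W$ gives local constancy in $x$, and a standard Bruhat decomposition argument as in \cite{moss1} confirms compact support in $x$.

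The main obstacle is the ring-theoretic vanishing step: over $\CC$ the relation $(\psi(y)-1)W = 0$ with $\psi(y)\neq 1$ trivially forces $W = 0$, but over $A$ one must confirm that $\psi_A(y) - 1$ is a genuine unit rather than merely a non-zero-divisor. This is exactly where the hypothesis $\ell\neq p$ together with the $W(k)$-algebra structure on $A$ enter. The bookkeeping for the $x$-integration when $j>0$ is a routine adaptation of the argument in \cite{moss1}.
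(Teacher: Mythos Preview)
Your proposal is correct and follows essentially the same route as the paper. The paper's proof is a terse pointer to \cite[Lemma~4.1.5]{jps2} (the torus support estimate for Whittaker functions), to \cite[\S3.1]{moss1} for the pattern of the argument, and to \cite[Cor.~2.9]{km} for the Iwasawa decomposition over $W(k)$-algebras; you have simply unpacked these references, including the crucial ring-theoretic point that $\psi_A(y)-1$ is a unit whenever $\psi(y)\neq 1$ because $\ell\neq p$, which is exactly what makes \cite[Lemma~4.1.5]{jps2} valid in this context.
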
 
\begin{proof} 
Since \cite[Lemma 4.1.5]{jps2} is valid in this context, the proof proceeds exactly as in \cite[\S 3.1]{moss1}, after applying the Iwasawa decomposition. The Iwasawa decomposition works in this setting after choosing an appropriate Haar measure, as shown in \cite[Cor 2.9]{km}.
\end{proof}

\begin{theorem}
\label{rationality2}
Suppose $A$ and $B$ are Noetherian $W(k)$-algebras, $V$ is an $A[G_n]$-module, and $V'$ is a $B[G_m]$-module, both admissible, of Whittaker type and finitely generated over $A[G_n]$ and $B[G_m]$ respectively. Define $S$ to be the multiplicative subset of $R[X,X^{-1}]$ consisting of polynomials whose first and last coefficients are units. For any $W\in \cW(V,\psi)$, $W'\in \cW(V',\psi)$, the formal series $\Psi(W,W',X;j)$ lives in $S^{-1}(R[X,X^{-1}])$.
\end{theorem}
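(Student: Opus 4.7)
The plan is to follow the strategy of \cite[\S 3]{moss1}, adapting the $GL(n)\times GL(1)$ argument to the general $GL(n)\times GL(m)$ case along the lines of the classical proof in \cite[\S 2.7]{jps2}. Since $\Psi$ is $R$-bilinear in $(W,W')$, and since there is a natural $R[X,X^{-1}]$-action on formal series arising from translation $g\mapsto \varpi\cdot g$ on the $G_m$-argument (which shifts the valuation and hence the exponent of $X$), it suffices to show that the $R[X,X^{-1}]$-submodule $M\subseteq R[[X]][X^{-1}]$ spanned by all $\Psi(W,W',X;j)$ as $(W,W')$ varies is contained in $S^{-1}(R[X,X^{-1}])$.

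First I would apply the Iwasawa decomposition $G_m = N_m T_m K_m$ with $K_m = GL_m(\cO_F)$, using the compatible Haar measure from \cite[Cor 2.9]{km}. Smoothness of $W$ and $W'$ together with compactness of $K_m$ reduces the $K_m$-integral to a finite sum, and the problem collapses to controlling a sum over the cocharacter lattice $\ZZ^m$ of $T_m$, with the valuation of $\det g$ producing the power of $X$.

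Next I would stratify using the Bernstein--Zelevinsky filtration of $\cW(V',\psi)$ as a $P_m$-representation (and symmetrically that of $\cW(V,\psi)$). The bottom layer is the Schwartz module $\cS(V') = (\Phi^+)^{m-1}(V')^{(m)}$ from Definition \ref{schwartzdefn}: on $\cW(V,\psi)\otimes \cS(V')$ the torus-support is compact in the relevant direction, so the resulting $\Psi$ lies directly in $R[X,X^{-1}]$. The successive subquotients are induced from lower derivatives, which are admissible and finitely generated over $R$ by hypothesis. Admissibility combined with the Noetherian condition forces the restriction of $W\otimes W'$ along a fixed central cocharacter $r\mapsto \varpi^r$ to span a finitely generated module over a polynomial ring in the shift operator; Cayley--Hamilton then furnishes a monic polynomial identity, which translates on the formal-series side into the existence of a polynomial in $R[X]$ with unit leading coefficient whose product with the relevant contribution to $M$ already lies in $R[X,X^{-1}]$.

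The main obstacle is to land inside $S^{-1}(R[X,X^{-1}])$ rather than merely inside a localization at one polynomial with unit leading coefficient: elements of $S$ must have \emph{both} first and last coefficients units. As in \cite{moss1}, the remedy is to run the admissibility/recursion argument separately in the positive- and negative-valuation directions of the torus cocharacters, using the admissibility of $V$ and $V'$ symmetrically, and then combine the two one-sided polynomial relations into a single element of $S$. The tensor product $R=A\otimes_{W(k)}B$ contributes no additional difficulty beyond the Noetherian hypothesis on each factor, since the Hecke-module and admissibility arguments act independently on the $V$ and $V'$ sides.
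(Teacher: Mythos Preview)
Your high-level strategy---Iwasawa decomposition, reduction to the torus $T_m$, shift operators plus admissibility to produce a polynomial relation---is the same as the paper's. But two steps in your execution are genuinely incomplete.

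First, and most seriously, after Iwasawa the integration is over the full $m$-dimensional torus $T_m$, not over a single cocharacter. You speak only of ``a fixed central cocharacter $r\mapsto\varpi^r$,'' but one direction does not suffice. The paper parametrizes $T_m$ by coordinates $(a_1,\dots,a_m)$ and introduces $m$ commuting shift operators $\rho_i(\varpi)$. The heart of the argument (Lemma~\ref{productofmonicpolys}) is an \emph{induction on $m$} producing monic polynomials $f_1,\dots,f_m\in R[X]$ with unit constant term such that $f_1(\rho_1(\varpi))\cdots f_m(\rho_m(\varpi))$ kills the torus restriction modulo functions vanishing for large $v(a_i)$ in \emph{every} coordinate simultaneously; the inductive step uses local constancy and the support bound of Lemma~\ref{finitelymanynegativeterms} to reduce to finitely many functions in the last variable. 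Your sketch contains no analogue of this multi-variable step. Note also that $T_m\not\subset P_m$, so the Bernstein--Zelevinsky filtration of $\cW(V',\psi)$ as a $P_m$-module does not by itself control restriction to $T_m$.

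Second, your proposed remedy for landing in $S$---running the recursion separately in positive and negative valuation directions and combining---is neither what the paper does nor obviously sufficient: two one-sided monic relations need not multiply to a polynomial whose first \emph{and} last coefficients are both units. The paper's mechanism is cleaner and one-sided: each $\rho_i(\varpi)$ acts \emph{invertibly} on the quotient $\cV/\cV_i$, so any integral equation it satisfies over $R$ is automatically monic with unit constant term. The required integrality comes not from the derivative filtration but from factoring the torus-restriction map through the Jacquet module $J_{M_n(i)}\cW(V,\psi)\otimes J_{M_m(i)}\cW(V',\psi)$; admissibility of Jacquet modules (via \cite{h_bern}) together with $G$-finiteness makes the $(M_n(i)\times M_m(i))$-equivariant endomorphism ring, and hence the subalgebra generated by $\rho_i(\varpi)$, module-finite over $R$.
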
 
When $A=B$ we can take the image of the zeta integrals in the map $S_R^{-1}(R[X,X^{-1}])\rightarrow S_A^{-1}(A[X,X^{-1}])$ induced by the map $R\rightarrow A:a_1\otimes a_2\mapsto a_1a_2$ and recover the rationality result that would be desired when both representations live over the same coefficient ring.

The remainder of this section is devoted to proving Theorem \ref{rationality2}.  In the setting of representations over a field, there is a useful decomposition of Whittaker functions into ``finite'' functions, which quickly leads to a rationality result \cite{jps1,jps2}, \cite[Prop 3.3]{km}. In the setting of rings, such a structure theorem is lacking, since the theory of finite functions achieved in \cite[Ch. 1 \S 8]{jacquet_langlands} only exists at present for vector spaces over fields. It seems difficult to extend this theory to modules which are not even free. However, certain elements of the original proof, combined with a translation property of the integral (after restricting to the torus), can be used to prove rationality. 

As in \cite{jps2} it suffices to consider only the $j=0$ integral. Using the Iwasawa decomposition as in \cite{jps2} or \cite[Cor 2.9]{km}, it suffices to prove the theorem when the integration is restricted to the torus $T_m$:
$$\sum_{r\in \ZZ}\int_{\{a\in T_m:v(\det a)=r\}}\left(W(\begin{smallmatrix}a&0\\0&I_{n-m}\end{smallmatrix})\otimes W(a)\right)X^{v(\det a)}da$$

We parametrize the torus $T_m$ by $$\prod_{i=1}^mF^{\times} \rightarrow T_m: (a_1,\dots,a_n)\mapsto \left(\begin{smallmatrix} a_1\cdots a_m & \\ &a_2\cdots a_m &\\ &&\ddots& \\ &&&a_m \end{smallmatrix}\right) =:a.$$ Consider the exterior product representation $\cW:= \cW(V,\psi)\otimes\cW(V',\psi)$ in $\Rep_R(G_n\times G_m)$. There is a natural surjection of $R$-modules
$\cW\rightarrow C^{\infty}(T_m,R)$ mapping $W\otimes W'$ to the restriction $W\left(\begin{smallmatrix}a&0\\0&I_{n-m} \end{smallmatrix}\right)\otimes W'(a)$. This map is the restriction of functions from $G_n\times G_m$ to the subgroup $T_m \overset{\Delta}{\hookrightarrow} T_m\times T_m \hookrightarrow G_n\times G_m$, where $\Delta$ denotes the diagonal embedding and $T_m\hookrightarrow G_n$ is the embedding of $T_m$ within the upper-left $m\times m$ block of $G_n$. Denote by $\cV$ the image of this restriction map, in other words the $A$-module generated by $$\{W\left(\begin{smallmatrix}a&0\\0&I_{n-m} \end{smallmatrix}\right)\otimes W'(a): W\in \cW(V,\psi), W'\in \cW(V',\psi)\},$$ the input $a$ indicating a function on $T_m$.

Let $v:F\rightarrow \ZZ$ denote the valuation on $F$. Given a function $\phi$ on $T_m$, we say that $\phi(a)\rightarrow 0$ uniformly as $v(a_i)\rightarrow \infty$ if there exists $N>0$ such that $v(a_i)\geq N$ implies $\phi(a)=0$. Define $$\cV_i:=\{\phi\in \cV: \phi(a) \rightarrow 0\text{ uniformly as } v(a_i)\rightarrow \infty\}.$$

For $i\leq m$ let $M_n(i)$ (resp. $M_m(i)$) denote the standard Levi subgroup $G_i\times G_{n-i}$ (resp. $G_i\times G_{m-i}$), and let $N_n(i)$ (resp. $N_m(i))$ denote its unipotent radical.

\begin{lemma}
Let $\theta_i$ denote the composition $\cW\rightarrow \cV\rightarrow \cV/\cV_i$. Then the submodule $\cW(N_n(i)\times N_m(i),\textbf{1})$ is contained in $\ker (\theta_i)$.
\end{lemma}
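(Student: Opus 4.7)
The plan is to work on generators. The submodule $\cW(N_n(i) \times N_m(i), \mathbf{1})$ is $R$-linearly generated by differences $(n,n')(W \otimes W') - W \otimes W'$ with $n \in N_n(i)$, $n' \in N_m(i)$, $W \in \cW(V,\psi)$, $W' \in \cW(V',\psi)$, so it suffices to show that the image of each such element under restriction to $T_m$ lies in $\cV_i$.

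The key computation is a conjugation of the embedded torus through $n$. Writing $n = \bigl(\begin{smallmatrix} I_i & X \\ 0 & I_{n-i} \end{smallmatrix}\bigr)$, $n' = \bigl(\begin{smallmatrix} I_i & X' \\ 0 & I_{m-i} \end{smallmatrix}\bigr)$, and embedding $a = \operatorname{diag}(a_1 \cdots a_m, \ldots, a_m) \in T_m$ as $\tilde a = \operatorname{diag}(a, I_{n-m}) \in G_n$, a direct block calculation shows that $\tilde a n \tilde a^{-1}$ lies in $N_n$ and that all of its superdiagonal entries sit inside one of the identity blocks $I_i$, $I_{n-i}$ except for the $(i,i+1)$-entry, which equals $a_1 \cdots a_i \cdot X_{1,1}$. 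Whittaker equivariance then gives $W(\tilde a n) = \psi(a_1 \cdots a_i X_{1,1}) W(\tilde a)$; the analogous calculation for $G_m$ (vacuous when $i = m$, since $N_m(m) = 1$) gives $W'(a n') = \psi(a_1 \cdots a_i X'_{1,1}) W'(a)$. Hence the restriction to $T_m$ of $(n,n')(W \otimes W') - W \otimes W'$ is the function
\[
a \longmapsto \bigl[\psi(a_1 \cdots a_i \cdot c) - 1\bigr]\, W(\tilde a)\, W'(a), \qquad c := X_{1,1} + X'_{1,1} \in F.
\]

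For the uniform vanishing I would invoke the standard support-below estimate on the torus: there is an integer $N$, depending only on $W$ and $W'$, such that $W(\tilde a) W'(a) = 0$ as soon as $v(a_j) < N$ for some $j < m$. This is the same ingredient that powers Lemma \ref{finitelymanynegativeterms} via \cite[Lemma 4.1.5]{jps2} and \cite[\S 3.1]{moss1}, arising from smoothness of $W$ and $W'$ together with $\psi$-equivariance (conjugating a small compact open subgroup of $N_n$, resp.\ $N_m$, through $\tilde a$ forces the valuation of each $a_j$ with $j < m$ to be bounded below on the support). On this support one then has $v(a_1 \cdots a_{i-1}) \geq (i-1)N$, so $v(a_1 \cdots a_i c) \geq (i-1)N + v(c) + v(a_i) \to \infty$ as $v(a_i) \to \infty$ (assuming $c \neq 0$; the case $c = 0$ is trivial). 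Because $\psi$ has open kernel, $\psi(a_1 \cdots a_i c) = 1$ once $v(a_i)$ crosses a uniform threshold, so the displayed function vanishes identically for $v(a_i)$ sufficiently large, placing it in $\cV_i$.

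The only place one has to be careful is the support-below estimate; but this is essentially formal from smoothness and $\psi$-equivariance and is already in hand from the setup of Lemma \ref{finitelymanynegativeterms}, so no genuinely new analytic input is required for this lemma.
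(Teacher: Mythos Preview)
Your strategy is exactly the paper's: reduce to generators, conjugate the embedded torus through $(n,n')$, extract the single superdiagonal contribution to $\psi$, and let $v(a_i)\to\infty$. The computation, however, is off. In $n=\bigl(\begin{smallmatrix}I_i & X\\ 0 & I_{n-i}\end{smallmatrix}\bigr)$ the $(i,i+1)$ entry is the \emph{bottom-left} corner $X_{i,1}$ of the block $X$, not $X_{1,1}$; and with the paper's parametrization of $T_m$ (so that the $j$-th diagonal entry of $a$ is $a_j a_{j+1}\cdots a_m$) the conjugation scaling is $d_i/d_{i+1}=a_i$, not $a_1\cdots a_i$. (What you wrote, $a_1\cdots a_i\,X_{1,1}$, is actually the $(1,i+1)$ entry of $\tilde a n \tilde a^{-1}$, which is not on the superdiagonal and does not contribute to $\psi$.) The correct restriction is therefore
\[
a\longmapsto \bigl[\psi\bigl(a_i\,(X_{i,1}+X'_{i,1})\bigr)-1\bigr]\,W(\tilde a)\otimes W'(a),
\]
and this visibly lies in $\cV_i$ since $\psi$ has open kernel.

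Once the factor is $a_i$ alone, your detour through the support-below estimate on $a_1,\dots,a_{i-1}$ evaporates: no control on the other coordinates is needed, and the argument closes in one line, exactly as in the paper. The support-below input you invoke is correct and available, but it is a symptom of the miscomputation rather than a genuine ingredient of this lemma.
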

\begin{proof}
By definition $\cW(N_n(i)\times N_m(i),\textbf{1})$ is the submodule of $\cW$ generated by elements $(n,n')\phi - \phi$ where $(n,n')\in N_n(i)\times N_m(i)$ and $\phi\in \cW$. If $x\in G_n$ and and $x'\in G_m$ are any unipotent upper triangular matrices, we can apply $\psi$ to $(x,x')$ in $G_n\times G_m$ by embedding in $G_{n+m}$ as usual, so $\psi(x,x')=\psi(x)\psi(x')$. Moreover,  by the definition of $\cW$, $\phi(xg,x'g') = \psi(x,x')\phi(g,g')$ for $g\in G_n$, $g'\in G_m$. Now taking $a = \left(\begin{smallmatrix} a_1\cdots a_m & \\ &a_2\cdots a_m &\\ &&\ddots& \\ &&&a_m \end{smallmatrix}\right)\in T_m$, $n = \left(\begin{smallmatrix}I_i & y \\ 0 & I_{n-i} \end{smallmatrix}\right)\in N_n(i)$, and $n' = \left(\begin{smallmatrix}I_i & y' \\ 0 & I_{m-i} \end{smallmatrix}\right)\in N_m(i)$, we get
\begin{align*}
(n,n')\phi(a,a) = \phi(ana^{-1}a,an'a^{-1}a)
=  \psi(ana^{-1})\psi(an'a^{-1})\phi(a,a)
= \psi\left(\begin{smallmatrix} I_i & z \\ 0 & I_{n-i} \end{smallmatrix}\right)\psi\left(\begin{smallmatrix} I_i & z' \\ 0 & I_{m-i} \end{smallmatrix}\right)\phi(a,a)
\end{align*}
where $z$ is an $i\times (n-i)$ matrix whose bottom left entry is $a_iy_{i,1}$ and $z'$ is an $i\times (m-i)$ matrix whose bottom left entry is $a_iy'_{i,1}$. Therefore, this expression equals $\psi(a_iy_{i,1})\psi(a_iy'_{i,1})\phi(a,a)$. This shows that for $v(a_i)$ sufficiently large, $(n,n')\phi(a,a) - \phi(a,a)$ equals zero.
\end{proof}
\begin{lemma}
$\cW_{N_n(i)\times N_m(i),\textbf{1}} \cong J_{M_n(i)}\cW(V,\psi)\otimes J_{M_m(i)}\cW(V',\psi)$.
\end{lemma}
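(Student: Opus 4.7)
The plan is to reduce the statement to the abstract categorical fact that coinvariants of an external tensor product of group representations decompose as the external tensor product of the coinvariants, applied with trivial characters on both factors. Precisely, for any $W(k)$-module $M$ carrying a smooth $H$-action and any $W(k)$-module $M'$ carrying a smooth $H'$-action, I would establish a natural isomorphism
$$(M \otimes_{W(k)} M')_{H \times H', \mathbf{1}} \;\isomto\; M_{H,\mathbf{1}} \otimes_{W(k)} M'_{H',\mathbf{1}},$$
where $H \times H'$ acts factor-by-factor. Once this is in hand, specializing to $M = \cW(V,\psi)$, $M' = \cW(V',\psi)$, $H = N_n(i)$, $H' = N_m(i)$ yields the claim by the definition of the Jacquet functors: $J_{M_n(i)}\cW(V,\psi) = \cW(V,\psi)_{N_n(i),\mathbf{1}}$ and analogously on the $G_m$ side.

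To prove the abstract isomorphism, I would construct the map in both directions from universal properties. Going rightward, the canonical surjection $M \otimes M' \twoheadrightarrow M_H \otimes M'_{H'}$ kills every element of the form $(h,h')(v \otimes v') - v \otimes v' = hv \otimes h'v' - v \otimes v'$, since this equals $(hv - v) \otimes h'v' + v \otimes (h'v' - v')$, and both summands vanish after passing to $M_H$ in the first factor or $M'_{H'}$ in the second. This map therefore factors through the coinvariants of $H \times H'$ acting diagonally. Going leftward, for each fixed $v' \in M'$ the induced map $M \to (M \otimes M')_{H \times H',\mathbf{1}}$ factors through $M_H$, since $(h,1)$ acts trivially on the target; symmetrically in $v'$, this produces a bilinear map $M_H \times M'_{H'} \to (M \otimes M')_{H \times H',\mathbf{1}}$ and hence a map out of the tensor product. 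The two maps are mutually inverse on elementary tensors, hence everywhere.

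Applying this to the case at hand, the left-hand side $\cW_{N_n(i) \times N_m(i),\mathbf{1}}$ identifies with $\cW(V,\psi)_{N_n(i),\mathbf{1}} \otimes_{W(k)} \cW(V',\psi)_{N_m(i),\mathbf{1}}$, which by definition is $J_{M_n(i)}\cW(V,\psi) \otimes J_{M_m(i)}\cW(V',\psi)$. The result is naturally an $R$-module and an $(M_n(i) \times M_m(i))$-equivariant isomorphism, compatible with the quotient map from the previous lemma.

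I do not expect any substantial obstacle: the only point requiring minor care is verifying that the distributivity of coinvariants over $\otimes_{W(k)}$ is compatible with the $R = A \otimes_{W(k)} B$-module structure, but this is automatic because the $A$- and $B$-actions commute with the $H$- and $H'$-actions respectively and the isomorphism is constructed from maps of $W(k)$-modules that respect these extra structures. In particular, no flatness hypothesis is needed because the statement is a presentation-level computation using only the right exactness of the various quotient and tensor constructions.
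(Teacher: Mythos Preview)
Your proposal is correct and follows essentially the same approach as the paper: both arguments establish the abstract fact that coinvariants distribute over an external tensor product, $(M\otimes M')_{H\times H',\mathbf{1}}\cong M_{H,\mathbf{1}}\otimes M'_{H',\mathbf{1}}$. The only cosmetic difference is that the paper phrases this via the Yoneda/universal-property argument (showing every map from $\cW$ to an object with trivial $N_n(i)\times N_m(i)$-action factors through $J_{M_n(i)}\cW(V,\psi)\otimes J_{M_m(i)}\cW(V',\psi)$), whereas you construct the two inverse maps explicitly; these are interchangeable packagings of the same proof.
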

\begin{proof}
Let $X$ be an $(A\otimes B)[G_n\times G_m]$-module such that $N_n(i)\times N_m(i)$ acts trivially on $X$. Since $N_n(i)\times\{1\}$ and $\{1\}\times N_m(i)$ also act trivially, any $G_n\times G_m$-equivariant map
$\phi:\cW\rightarrow X$ satisfies $\phi((nW-W)\otimes W') = 0$ and $\phi(W\otimes (n'W'-W')) = 0$ for $n\in N_n(i)$, $n'\in N_m(i)$, $W\in \cW(V,\psi)$, $W'\in \cW(V',\psi)$. This shows that $\phi$ factors through the quotient maps
$$\cW\longrightarrow J_{M_n(i)}\cW(V)\otimes \cW(V')\longrightarrow J_{M_n(i)}\cW(V)\otimes J_{M_m(i)}\cW(V').$$ It follows that $J_{M_n(i)\times M_m(i)}\cW$ and $J_{M_n(i)}\cW(V)\otimes J_{M_m(i)}\cW(V')$ satisfy the same universal property.
\end{proof}

Hence, we've shown that the map $\theta_i$ factors through the Jacquet restriction $J_{M_n(i)}\cW(V,\psi)\otimes J_{M_m(i)}\cW(V',\psi)$.

Let $\rho_i(\varpi)$ denote right translation of a function by the diagonal matrix with $\varpi$ in the first $i$ diagonal entries: $$\left(\begin{smallmatrix}\varpi \\ &\ddots\\ &&\varpi\\ &&&1\\&&&&\ddots \\ &&&&&1 \end{smallmatrix}\right).$$ Note that if we're considering functions on the torus $T_m$ parametrized as $\prod_{i=1}^mF^{\times}$ as above, this translates to
$$\big(\rho_i(\varpi)\phi\big)(a_1,\dots,a_m)= \phi(a_1,\dots, a_{i-1},a_i\varpi,a_{i+1}\dots,a_m).$$ 

\begin{lemma}
Let $V$ and $V'$ be admissible and $G$-finite. Let $B_i$ be the $R$-subalgebra of $\End_R(\cV/\cV_i)$ generated by $\rho_i(\varpi)$. Then $B_i$ is finitely generated as a module over $R$.
\end{lemma}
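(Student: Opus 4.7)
The strategy is to use the preceding lemma to factor through the Jacquet module, observe that $\rho_i(\varpi)$ acts as a central element of the Levi, and then apply Cayley--Hamilton on a suitable compact-open-fixed subspace.

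By the previous lemma, $\theta_i$ factors through a surjection $J \twoheadrightarrow \cV/\cV_i$, where $J := J_{M_n(i)}\cW(V,\psi)\otimes_R J_{M_m(i)}\cW(V',\psi)$. The endomorphism $\rho_i(\varpi)$ on $\cV/\cV_i$ is induced by right translation on $\cW$ by $(\zeta_n,\zeta_m)$, where $\zeta_n=\mathrm{diag}(\varpi I_i, I_{n-i})$ is central in $M_n(i)$ and $\zeta_m=\mathrm{diag}(\varpi I_i, I_{m-i})$ is central in $M_m(i)$. Consequently $\rho_i(\varpi)$ descends to a well-defined endomorphism of $J$ that commutes with the action of $M_n(i)\times M_m(i)$, and the surjection $J\twoheadrightarrow \cV/\cV_i$ is automatically $R[\rho_i(\varpi)]$-equivariant.

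Since $V$ and $V'$ are admissible and $G$-finite, so are the Whittaker quotients $\cW(V,\psi)$ and $\cW(V',\psi)$. The Jacquet functor preserves admissibility and finite generation for smooth representations over Noetherian coefficient rings \cite{vig}, so $J$ is admissible and $(M_n(i)\times M_m(i))$-finite over $R$. Choose a compact open subgroup $U\subset M_n(i)\times M_m(i)$ small enough that $J^U$ generates $J$ as a representation of $M_n(i)\times M_m(i)$; this is possible because $J$ is finitely generated and smooth. By admissibility, $J^U$ is a finitely generated $R$-module, and since $\rho_i(\varpi)$ is central it commutes with $U$, hence preserves $J^U$.

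By Cayley--Hamilton for endomorphisms of a finitely generated module over a commutative ring, there is a monic polynomial $p(T)\in R[T]$ such that $p(\rho_i(\varpi))$ annihilates $J^U$. Centrality propagates the relation to all of $J$: for any $g\in M_n(i)\times M_m(i)$ and $v\in J^U$, $p(\rho_i(\varpi))(gv)=g\cdot p(\rho_i(\varpi))(v)=0$, so $p(\rho_i(\varpi))$ vanishes on the $(M_n(i)\times M_m(i))$-span of $J^U$, which is all of $J$. Via the equivariant surjection $J\twoheadrightarrow \cV/\cV_i$, $p(\rho_i(\varpi))$ vanishes on $\cV/\cV_i$ as well, so $B_i$ is generated over $R$ by $1,\rho_i(\varpi),\dots,\rho_i(\varpi)^{\deg p-1}$. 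The main technical hurdle is verifying in the generality of Noetherian $W(k)$-algebras that the Jacquet functor preserves admissibility and finite generation; with that in hand, the centrality-plus-Cayley--Hamilton argument is essentially formal.
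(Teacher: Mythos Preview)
Your argument is correct and follows essentially the same strategy as the paper: factor through the Jacquet module $J_{M_n(i)}\cW(V,\psi)\otimes J_{M_m(i)}\cW(V',\psi)$, use that $\rho_i(\varpi)$ comes from a central element of the Levi, and exploit admissibility plus finite generation of the Jacquet module. Two small remarks. First, the tensor product defining $J$ should be over $W(k)$, not over $R$; the two factors are $A$- and $B$-modules respectively, and it is their exterior tensor product that carries an $R=A\otimes_{W(k)}B$-module structure. Second, the paper handles the preservation of admissibility under the Jacquet functor with some care, reducing to primitive $W(k)[G]$-modules via results in \cite{h_bern} and \cite{bushnell_localization} and using that an admissible $G$-finite module has only finitely many nonzero primitive components; your citation of \cite{vig} is a bit loose in this generality, though the conclusion is the same.

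The only genuine difference in execution is the endgame. The paper proves the stronger fact that the entire Levi-equivariant endomorphism algebra $\End_{A[M_n(i)]}(J_{M_n(i)}\cW(V,\psi))\otimes\End_{B[M_m(i)]}(J_{M_m(i)}\cW(V',\psi))$ is finite over $R$, by observing that any such endomorphism is determined by its values on a finite generating set lying in a finitely generated $U$-fixed submodule. You instead apply Cayley--Hamilton directly to $\rho_i(\varpi)$ on $J^U$ and propagate by centrality. Your route is shorter and suffices for what is needed; the paper's route yields a slightly stronger intermediate statement but at no real extra cost.
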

\begin{proof}

For any $i$, the operator $\rho_i(\varpi)$ defines a linear endomorphism of the spaces $J_{M_n(i)}\cW(V,\psi)$ and $J_{M_m(i)}\cW(V',\psi)$, and so acts diagonally on their tensor product. For each $i$ it preserves the kernel of the surjective map
$J_{M_n(i)}\cW(V,\psi)\otimes J_{M_m(i)}\cW(V',\psi)\rightarrow \cV/\cV_i$ so in particular the sub-algebra of $\End_R(\cV/\cV_i)$ generated by $\rho_i(\varpi)$ can be identified with the subalgebra of $\End_R(J_{M_n(i)}\cW(V,\psi)\otimes J_{M_m(i)}\cW(V',\psi))$ generated by $\rho_i(\varpi)$.

But we have an injection
\begin{align*}
\End_{A[M_n(i)]}(J_{M_n(i)}\cW(V,\psi))\otimes \End_{B[M_m(i)]}(J_{M_m(i)}\cW(V',\psi))&\hookrightarrow\\ \End_{R[M_n(i)\times M_m(i)]}(J_{M_n(i)}\cW(V,\psi)\otimes J_{M_m(i)}\cW(V',\psi))
\end{align*} as $R$-modules, and the subalgebra $B_i$ we're considering lands inside the smaller space. 

By combining \cite[Prop 9.15]{h_bern}, \cite[Prop 9.12]{h_bern}, and \cite[Lemma 1]{bushnell_localization}, we deduce that for any standard Levi subgroup $M$, the functor $J_M$ preserves admissibility for primitive $W(k)[G]$-modules. Because $V$ is admissible and $G$-finite, there are finitely many primitive orthogonal idempotents $e$ of the Bernstein center such that $eV\neq 0$. Therefore $J_{M_n(i)}\cW(V,\psi)$ is an admissible $A[M_n(i)]$-module. It is a finite-type $A[M_n(i)]$-module by \cite[Prop 3.13(e)]{b-zI}, whose proof relies only on the fact that, if $P_n(i)$ is the parabolic subgroup $M_n(i)N_n(i)$, then $P\backslash G$ is compact. Hence we can take a finite set $\{w_i\}$ of $A[M_n(i)]$ generators and a sufficiently small compact open subgroup $U$ of $M_n(i)$ which fixes them all. Any $A[M_n(i)]$-equivariant endomorphism is uniquely determined by its values on $\{w_i\}$. On the other hand, $M_n(i)$-equivariance means such an endomorphism preserves $U$-invariance, and the $U$-fixed vectors are finitely generated, therefore it is uniquely determined via $A$-linearity from a finite set of values. This shows that the algebra $\End_{A[M_n(i)]}(J_{M_n(i)}\cW(V,\psi))$ is finitely generated as an $A$-module, hence its sub-algebra defined by $B_i$ is also finitely generated. The same is true for $J_{M_m(i)}\cW(V',\psi)$, hence their tensor product is finitely generated as a module over $A\otimes B$.
\end{proof}

Given $1\leq j \leq m$, define $\mathcal{V}^j$ (resp. $\cV^j_i$) to be the submodule of $C^{\infty}(T_j,R)$ given by $\{\phi|_{T_j}:\phi\in \mathcal{V} \text{ (resp. $\phi\in \cV_i$)}\}$.

\begin{lemma}
\label{productofmonicpolys}
There exist monic polynomials $f_1,\dots,f_m$ in $R[X]$ with unit constant term such that, for any $j=1,\dots,m$, the product $f_1(\rho(\varpi_1))\cdots f_j(\rho(\varpi_j))$ maps $\cV^j$ into $\cap_{i\leq j}\cV^j_i$.
\end{lemma}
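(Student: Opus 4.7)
The plan is to first produce, for each $i\in\{1,\dots,m\}$, a polynomial $f_i(X)\in R[X]$ that is monic with unit constant term and satisfies $f_i(\rho_i(\varpi))\cV\subseteq \cV_i$, and then to assemble the product by exploiting commutativity of the $\rho_k(\varpi)$ and compatibility with restriction to $T_j$.

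For the construction of $f_i$, the previous lemma gives that $B_i=R[\rho_i(\varpi)]\subseteq \End_R(\cV/\cV_i)$ is finitely generated as an $R$-module, so $\rho_i(\varpi)$ is integral over $R$ and there exists a monic polynomial $g_i\in R[X]$ with $g_i(\rho_i(\varpi))\cV\subseteq \cV_i$; the work is in arranging the constant term to be a unit. The crucial extra input is that $\rho_i(\varpi)$, being right translation by the invertible diagonal matrix $\mathrm{diag}(\varpi,\dots,\varpi,1,\dots,1)$, is a unit in the ambient ring $\End_R(\cV/\cV_i)$. From this I would argue that $\rho_i(\varpi)^{-1}$ actually lies in $B_i$ itself: by finite generation of $B_i$ over $R$, it suffices to verify modulo each maximal ideal $\mathfrak{m}$ of $R$ that $\rho_i(\varpi)$ is a unit in the Artinian quotient $B_i/\mathfrak{m}B_i$, and then invoke Nakayama on the finitely generated $R$-module $B_i/\rho_i(\varpi)B_i$ to conclude $\rho_i(\varpi)B_i=B_i$. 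Once $\rho_i(\varpi)^{-1}=q_i(\rho_i(\varpi))$ for some $q_i\in R[X]$ of degree less than $\deg g_i$, the polynomial $Xq_i(X)-1\in\ker(R[X]\to B_i)$ has unit constant term $-1$, and combining with the monic $g_i$ by adding a suitable $R$-scalar multiple of $Xq_i-1$ produces $f_i$ monic of degree $\deg g_i$ with unit constant term.

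For the product, observe that for $i\leq j$ the restriction map $\cV\to\cV^j$ intertwines $\rho_i(\varpi)$, because $\rho_i(\varpi)$ only shifts the coordinate $a_i$, which is retained in the parametrization of $T_j$; hence $f_i(\rho_i(\varpi))\cV^j\subseteq \cV^j_i$. The operators $\rho_1(\varpi),\dots,\rho_j(\varpi)$ are right translations by pairwise commuting diagonal matrices, so the $f_i(\rho_i(\varpi))$ all commute. Moreover, for $k\neq i$ the operator $\rho_k(\varpi)$ does not affect the $a_i$-coordinate and therefore preserves the subspace $\cV^j_i$. Composing the commuting factors in any order, the product $f_1(\rho_1(\varpi))\cdots f_j(\rho_j(\varpi))$ sends $\cV^j$ into $\bigcap_{i\leq j}\cV^j_i$.

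The hardest point will be verifying that $\rho_i(\varpi)$ remains a unit in $B_i/\mathfrak{m}B_i$ for every maximal ideal $\mathfrak{m}$ of $R$: a priori this is not automatic from invertibility in the ambient endomorphism ring alone (non-zero-divisors in $B_i$ can become zero-divisors after reduction mod $\mathfrak{m}$), and one must use additional structure coming from the fact that $\cV/\cV_i$ arises from Jacquet modules of co-Whittaker representations, for instance via the Bernstein decomposition of the ambient category, to rule out the pathological possibility and complete the Nakayama argument.
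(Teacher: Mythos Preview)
Your overall strategy is sound and in fact more direct than the paper's. You correctly observe that once each $f_i(\rho_i(\varpi))$ sends $\cV$ into $\cV_i$, and once you know that $\rho_k(\varpi)$ preserves $\cV_i$ for $k\neq i$ (which is immediate from the fact that $\rho_k(\varpi)$ leaves the $a_i$-coordinate untouched), the commuting product $f_1(\rho_1(\varpi))\cdots f_j(\rho_j(\varpi))$ lands in $\bigcap_{i\le j}\cV_i^j$ by a one-line stability argument. The paper instead runs an induction on $m$: it first applies $f_m(\rho_m(\varpi))$ to force vanishing for $v(a_m)\gg 0$, then freezes $a_m=b$ and invokes the inductive hypothesis on $\cV^{m-1}$ to handle $a_1,\dots,a_{m-1}$, and finally uses a compactness argument (only finitely many distinct $\phi_b$ survive, by local constancy and the bounds from above and from Lemma~\ref{finitelymanynegativeterms}) to make the cutoffs $N_i$ uniform in $b$. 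Your route sidesteps this compactness step entirely, which is a genuine simplification.

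Where you go astray is in the treatment of the unit constant term. Your Nakayama strategy is circular as written: showing $\rho_i(\varpi)$ is a unit in $B_i/\mathfrak m B_i$ would require exactly the kind of control you are trying to establish, and invoking ``additional structure from the Bernstein decomposition'' is vague. There is a much easier fix, and it is implicitly what the paper is using when it writes ``the constant term is a unit because $\rho_i(\varpi)$ is invertible.'' Namely, $\rho_i(\varpi)^{-1}$ is also right translation by a central element of $M_n(i)\times M_m(i)$, so it too lies in the finite $R$-module $\End_{A[M_n(i)]}(J_{M_n(i)}\cW(V,\psi))\otimes\End_{B[M_m(i)]}(J_{M_m(i)}\cW(V',\psi))$ that the previous lemma exhibited. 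Hence $\rho_i(\varpi)^{-1}$ is itself integral over $R$: it satisfies a monic relation $h(Y)=Y^d+c_{d-1}Y^{d-1}+\dots+c_0$. Multiplying $h(\rho_i(\varpi)^{-1})=0$ through by $\rho_i(\varpi)^d$ gives a relation $p(\rho_i(\varpi))=0$ with $p(0)=1$. If $g_i$ is your monic relation of degree $n\ge 1$ for $\rho_i(\varpi)$, then $f_i(X):=X^d g_i(X)+p(X)$ is monic of degree $n+d$, has constant term $1$, and annihilates $\rho_i(\varpi)$ in $\End_R(\cV/\cV_i)$. No Nakayama, no reduction mod $\mathfrak m$, and no appeal to block structure is needed.
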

\begin{proof}
Proving the lemma means showing that, given $W\in \cV$, there exist $N_1,\dots,N_j$ sufficiently large that $$\left(f_1(\rho_1(\varpi))\cdots f_j(\rho_j(\varpi))W\right)(a_1,\dots,a_j)= 0$$ whenever any $a_i$ satisfies $v(a_i)>N_i$, for $i\leq j$. The set $\{N_1,\dots,N_j\}$ must depend only on $W$.

We proceed by induction on $m$. If $m=1$ then $\cap_i\cV_i = \cV_1$, so this follows directly from the $R$-module finiteness of $\langle\rho_1(\varpi)\rangle \subset \End_R(\cV/\cV_1)$. The constant term is a unit because $\rho_1(\varpi)$ is invertible.

Assume the lemma is true for $m-1$. Fix $W(a_1,\dots,a_m)$ an element of $\cV$. Since $\rho_m(\varpi)$ is an integral element of the ring $\End(\cV/\cV_m)$ and $\rho_m(\varpi)$ is invertible, there exists a monic polynomial $f_m(X)$ with unit constant term such that $f_m(\rho_m(\varpi))=0$ in $\End(\cV/\cV_m)$, in other words there exists $N_m$ such that 
$\big(f_m(\rho_m(\varpi))W\big)(a_1,\dots,a_m)=0$ whenever $v(a_m)>N_m$.

Now fix $b\in F^{\times}$ and define $\phi_b:\prod_{i=1}^{m-1}F^{\times}\rightarrow R$ to be the function $$\phi_b:(a_1,\dots,a_{m-1})\mapsto \big(f_m(\rho_m(\varpi))W\big)(a_1,\dots,a_{m-1},b).$$ Note that $\phi_b\equiv 0 $ when $v(b)>N_m$.

We can apply the induction hypothesis to $\cV^{m-1}$ to conclude there exist polynomials $f_1,\dots,f_{m-1}$ in $R[X]$ satisfying the required conditions, such that for any $\phi\in \cV^{m-1}$, there are large integers $N_1(\phi),\dots,N_{m-1}(\phi)$, depending on $\phi$, such that $\Big(f_1(\rho_1(\varpi))\cdots f_{m-1}(\rho_{m-1}(\varpi))\phi\Big)(a_1,\dots,a_{m-1}) = 0$ whenever any one of $a_1,\dots, a_{m-1}$ satisfies $v(a_i)>N_i(\phi)$.

Since $\phi_b$ is the restriction of a product of Whittaker functions to $T_{m-1}$ by construction, we can apply this specifically to $\phi_b$: there exist large integers $N_1(b)$, $\dots$, $N_{m-1}(b)$, depending on $b$, such that
$$\Big(f_1(\rho_1(\varpi))\cdots f_{m-1}(\rho_{m-1}(\varpi))f_m(\rho_m(\varpi))W\Big)(a_1,\dots,a_{m-1},b) = 0$$ whenever any one of $a_1,\dots,a_{m-1}$ satisfies $v(a_i)>N_i(b)$.

We wish to show that we can choose the $N_i$'s independently of $b$. But, since $\phi_b\equiv 0$ for $v(b)>N_m$, and $\phi_b$ also vanishes when $v(b)<<0$ by Lemma \ref{finitelymanynegativeterms}, we have that $\phi_b$ is only nonzero when $b$ is confined to a compact subset of $F^{\times}$. In particular, since $f_m(\rho_m(\varpi))W$ is locally constant in each variable (in particular its last variable), there are only finitely many distinct functions $\phi_b$ as $b$ ranges over this compact set. Thus, the sets $\{N_i(b):b\in F^{\times}\}$ are finite for each $i$ and we can choose $N_i$ to be $\max\{N_i(b):b\in F^{\times}\}$.

Therefore, we have
$\Big(f_1(\rho_1(\varpi))\cdots f_m(\rho_m(\varpi))W\Big)(a_1,\dots,a_m)= 0$ whenever $v(a_i)>N_i$ for $i=1,\dots,m$, as desired.
\end{proof}

We can now deduce Theorem \ref{rationality2} as follows. Slightly abusively, we use the symbols $W$ and $W'$ to denote elements of $\cV$, so everything is already restricted to $T_m$. First we apply $\Psi(-,-,X)$ to both sides of the following equation:
$f(\rho_1(\varpi))\cdots f_m(\rho_m(\varpi))(W\otimes W') = W_0$, for $W\otimes W'\in \cV$ and $W_0\in \cap_i\cV_i$. In particular, $\Psi(W_0,X)\in R[X,X^{-1}]$, so we have a polynomial on the right hand side.

Since the integrands on the left side are functions of $T_m$, we have the transformation property 
$$\Psi(\rho_1(\varpi)^{t_1}\cdots\rho_m(\varpi)^{t_m}(W\otimes W'), X) \\ = X^{t_1+2t_2+\cdots mt_m}\Psi(W,W', X).$$ Now, given the polynomials $f_i$ in Lemma \ref{productofmonicpolys}, we can define the multivariate polynomial

$$f(X_1,\dots,X_m):=f_1(X_1)f_2(X_2)\cdots f_m(X_m)$$ in $R[X_1,\cdots,X_m]$.
Then, we have shown that $\widetilde{f}(X)\Psi(W,W',X)\in R[X,X^{-1}]$ where $\widetilde{f}$ is the image of $f$ in the map 
$R[X_1,\dots,X_m]\rightarrow R[X]$ given by $X_i\mapsto X^i$. Since $\widetilde{f}$ lies in $S$, this concludes the proof of Theorem \ref{rationality2}.

\section{Co-Whittaker modules and the integral Bernstein center}
We recall the basic properties of co-Whittaker modules and their relation with the Bernstein center.
\begin{definition}[\cite{h_whitt} 3.3]
\label{essentiallyAIGdual}
Let $\kappa$ be a field of characteristic different from $p$. An admissible smooth object $U$ in $\Rep_{\kappa}(G)$ is said to have essentially AIG dual if it is finite length as a $\kappa[G]$-module, its cosocle $\cosoc(U)$ is absolutely irreducible generic, and $\cosoc(U)^{(n)}=U^{(n)}$ (the cosocle of a module is its largest semisimple quotient).
\end{definition}

For example, let $G=GL_2(F)$, $\kappa = k$, and suppose $q$ is not congruent to $\pm 1\mod\ell$ (so that we are in the so-called banal setting). Then $\Ind_B^G\textbf{1}$ has essentially AIG dual, since its length is two, it is Whittaker type, and its largest semisimple quotient, the Steinberg representation, is absolutely irreducible generic.

Definition \ref{essentiallyAIGdual} is equivalent to $U^{(n)}$ being one-dimensional over $\kappa$ and having the property that $W^{(n)}\neq 0$ for any nonzero quotient $\kappa[G]$-module $W$. See \cite[Lemma 6.3.5]{eh} for details.

\begin{definition}[\cite{h_whitt} 6.1]
\label{defnofcowhitt}
An object $V$ in $\Rep_A(G)$ is said to be co-Whittaker if it is admissible, of Whittaker type, and $V\otimes_A \kappa(\fp)$ has essentially AIG dual for each $\fp$ in $\Spec(A)$.
\end{definition}

Co-Whittaker representations satisfy Schur's lemma \cite[Prop 6.2]{h_whitt} and are generated over $A[G]$ by a single element \cite[Lemma 1.17]{moss1}. Every admissible Whittaker-type $A[G]$-module contains a canonical co-Whittaker submodule \cite[Prop 1.18]{moss1}. Co-Whittaker modules are generated over $A[G]$ by their Schwartz functions $\cS(V)$ \cite[Lemma 6.3.5]{eh}.
 
In the setting of co-Whittaker families, the classical notion of supercuspidal support for representations over a field does not exist. However, the following result of Helm suggests a generalization of the definition of supercuspidal support:
\begin{theorem}[\cite{h_whitt}, Thm 2.2]
Let $\kappa$ be a $W(k)$-algebra that is a field and let $\Pi_1$, $\Pi_2$ be two absolutely irreducible representations of $G$ over $\kappa$ which live in the same block of the category $\Rep_{W(k)}(G)$. By Schur's lemma there are maps $f_1,f_2:\cZ\rightarrow \kappa$ giving the action of the Bernstein center on $\Pi_1$ and $\Pi_2$. Then $\Pi_1$ and $\Pi_2$ have the same supercuspidal support if and only if $f_1=f_2$
\end{theorem}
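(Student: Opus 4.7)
The plan is to exploit the geometric description of the block decomposition of the Bernstein center. Let $e\in\cZ$ be the primitive idempotent corresponding to the block common to $\Pi_1$ and $\Pi_2$, so that $e\Pi_i=\Pi_i$ and both central characters $f_i$ annihilate $(1-e)\cZ$, hence factor through the ring $e\cZ$. It therefore suffices to compare the induced ring maps $\bar f_i:e\cZ\to\kappa$; equivalently, each $\bar f_i$ determines a $\kappa$-point of the affine variety $\Omega_e:=\Spec(e\cZ)$ whose geometric theory was recalled immediately before the theorem.

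Next I would reduce to an algebraically closed coefficient field. Replacing $\kappa$ by an algebraic closure $\overline{\kappa}$, the absolute irreducibility hypothesis guarantees that $\Pi_i\otimes_\kappa\overline{\kappa}$ remains irreducible with the same supercuspidal support, and that its central character is $\bar f_i$ composed with $\kappa\hookrightarrow\overline{\kappa}$. The equality $f_1=f_2$ holds if and only if it holds after this extension of scalars, so we may replace $\kappa$ by $\overline{\kappa}$ throughout.

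The heart of the argument is now the structure theorem for the integral Bernstein center from \cite{h_bern}. That theorem identifies the $\overline{\kappa}$-points of $\Omega_e$ with the set of supercuspidal supports in a fixed inertial class inside $\Rep_{\overline{\kappa}}(G)$ (the inertial class being the one cut out by $e$). The remaining content is the compatibility statement: the $\overline{\kappa}$-point of $\Omega_e$ attached to $\Pi_i$ by $\bar f_i$ is exactly the supercuspidal support of $\Pi_i$. I would verify this via parabolic induction: for a supercuspidal $\sigma$ of a Levi $L$, each $z\in e\cZ$ acts on $i_P^G\sigma$, and hence on each of its irreducible subquotients, as the scalar obtained by evaluating the regular function $z$ at the corresponding closed point of $\Omega_e$. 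Since every absolutely irreducible $\Pi$ in the block appears as a subquotient of such an induced representation (with $(L,\sigma)$ its supercuspidal support), this identifies $\bar f_i$ with evaluation at the supercuspidal support of $\Pi_i$, from which the biconditional is immediate.

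The main obstacle is precisely this final compatibility and the underlying structure theorem for $e\cZ$: in the complex setting these are Bernstein--Deligne, but with coefficients in $W(k)$ the identification of $\Omega_e(\overline{\kappa})$ with supercuspidal supports requires the integral theory of \cite{h_bern}, including a careful treatment of unramified twists in the $\ell$-modular setting and the explicit computation of the action of central elements on parabolic inductions. Once this machinery is granted, the reduction to a single block and the bijectivity of the correspondence between block-points and supercuspidal supports make the theorem a direct consequence.
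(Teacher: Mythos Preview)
The paper does not give its own proof of this statement: it is quoted verbatim as \cite[Thm~2.2]{h_whitt} and used as input to the subsequent definition of supercuspidal support. There is therefore no ``paper's proof'' to compare against.

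That said, your outline is a reasonable reconstruction of how such a result is established. Reducing to a single block via the primitive idempotent $e$, passing to an algebraic closure using absolute irreducibility, and then invoking the description of $\overline{\kappa}$-points of $\Spec(e\cZ)$ as supercuspidal supports is exactly the shape of the argument in \cite{h_whitt}. You correctly locate the substantive content in the structure theory of the integral Bernstein center from \cite{h_bern}, and in particular in the compatibility of the central action with parabolic induction. One small caution: the step ``each $z\in e\cZ$ acts on $i_P^G\sigma$ \ldots\ as the scalar obtained by evaluating the regular function $z$ at the corresponding closed point'' is not a triviality but is essentially the theorem itself, so your final paragraph is right to flag it as the main obstacle rather than treat it as an input. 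In the $\ell$-modular setting one must also be careful that distinct supercuspidal supports can share an inertial class in ways not seen over $\CC$; the injectivity direction of the bijection relies on the explicit invariant-theoretic description of $e\cZ$ in \cite{h_bern}, not just on the existence of some map from supports to points.
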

\begin{definition}[Supercuspidal Support]
Any co-Whittaker $A[G]$ module satisfies Schur's lemma, and therefore defines a map $f_V:\cZ\rightarrow \End_{A[G]}(V)\isomto A$, which is called the supercuspidal support of $V$.
\end{definition}
In \S \ref{conversetheoremsection} we show that two co-Whittaker modules have the same supercuspidal support if and only if they have the same Whittaker space.

Consider the smooth $W(k)[G]$-module 
$\fW:=\cInd_N^G\psi$. Within the category of co-Whittaker modules up to supercuspidal support, $\fW$ satisfies the following universal property:
\begin{proposition}[\cite{h_whitt}, Thm 6.3]
\label{dominance}
If $A$ is any Noetherian $W(k)$-algebra with a $\cZ$-algebra structure, $\fW\otimes_{\cZ} A$ is a co-Whittaker $A[G]$-module. Conversely, any co-Whittaker $A[G]$-module $V$ is a quotient of the representation $\fW\otimes_{\cZ,f_V}A$.
\end{proposition}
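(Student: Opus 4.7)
The plan is to handle the two assertions separately.

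For the first assertion, I would verify in turn that $\fW \otimes_\cZ A$ is Whittaker type, admissible, and has essentially AIG dual on each fiber. For Whittaker type, the key input is that $(\fW)_{N,\psi}$ is free of rank one over $\cZ$; this is a structural fact from the integral Bernstein center theory, dual to the identification $\End_G(\fW)=\cZ$ obtained via Frobenius reciprocity. Granting this, the right-exactness of the twisted coinvariants functor gives $(\fW\otimes_\cZ A)_{N,\psi}\cong(\fW)_{N,\psi}\otimes_\cZ A\cong A$. For admissibility, I would use that a Noetherian ring contains only finitely many pairwise orthogonal nonzero idempotents, so only finitely many primitive idempotents $e_1,\dots,e_r\in\cZ$ act nontrivially on $1_A$, and hence the structure map $\cZ\to A$ factors through the Noetherian ring $\prod_i e_i\cZ$. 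By \cite{h_bern, h_whitt}, each $e_i\fW$ is admissible over $e_i\cZ$, and this finiteness passes to the Noetherian base change $A$. For the fiber condition, the fiber $(\fW\otimes_\cZ A)\otimes_A\kappa(\fp)=\fW\otimes_\cZ\kappa(\fp)$ is specified by a point of $\Spec\cZ$, and the integral Bernstein center theory identifies this fiber with the essentially AIG representation over $\kappa(\fp)$ with the corresponding supercuspidal support.

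For the second assertion, I would use the Schwartz-functions description of co-Whittaker modules. Since $V$ is Whittaker type, $V^{(n)}=V_{N,\psi}$ is free of rank one over $A$, so $\cS(V)=(\Phi^+)^{n-1}V^{(n)}$ is isomorphic as a $P_n$-representation to $(\Phi^+)^{n-1}A\cong\cInd_{N_n}^{P_n}\psi_A$. The inclusion $\cS(V)\hookrightarrow V|_{P_n}$ corresponds, under Frobenius reciprocity for the compact induction $\cInd_{P_n}^{G_n}$, to a $G_n$-equivariant map $\cInd_{P_n}^{G_n}\cS(V)\to V$; by transitivity of compact induction, the source is $\cInd_{N_n}^{G_n}\psi_A=\fW\otimes_{W(k)}A$. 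The image of this map contains $\cS(V)$, which generates $V$ as an $A[G]$-module by \cite[Lemma 6.3.5]{eh}, so the map is surjective. Being $G$-equivariant, it is automatically $\cZ$-equivariant; since $\cZ$ acts on $V$ through $f_V$, the map factors through the quotient $\fW\otimes_{\cZ,f_V}A\twoheadrightarrow V$.

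The main obstacle is the admissibility step of the first assertion. The non-Noetherian nature of $\cZ$ makes base change delicate, and the reduction to a finite collection of blocks via the Noetherian hypothesis on $A$ is essential; once restricted to a single block $e\cZ$, one must import the admissibility of $e\fW$ over $e\cZ$ from the structural theory of the integral Bernstein center developed in \cite{h_bern,h_whitt}. The other steps, including the Frobenius-reciprocity argument in the second assertion, are comparatively formal manipulations given this input and the Schwartz-functions generation result.
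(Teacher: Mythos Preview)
The paper does not give its own proof of this proposition: it is stated as a citation of \cite[Thm 6.3]{h_whitt}, and the only argument supplied is the one-sentence remark immediately following it, explaining why the Noetherian hypothesis on $A$ reduces the general statement to the primitive case treated in \cite{h_whitt} (since $f_V(e)=0$ for all but finitely many primitive idempotents $e$). Your proposal, by contrast, is a full reconstruction of the argument underlying \cite[Thm 6.3]{h_whitt}. The overall strategy you outline---checking Whittaker type, admissibility, and the fiberwise condition for the first assertion, and using the Schwartz embedding plus Frobenius reciprocity for compact induction to produce the surjection in the second---is correct and is indeed how the cited result is proved. Your use of Noetherianness of $A$ to reduce to finitely many blocks is exactly the reduction the paper invokes.

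Two small technical points deserve tightening. First, the global claim that $(\fW)_{N,\psi}$ is free of rank one over $\cZ$ is not literally true: one has $\fW=\bigoplus_e e\fW$ and $(e\fW)_{N,\psi}\cong e\cZ$, so $(\fW)_{N,\psi}\cong\bigoplus_e e\cZ$, whereas $\cZ=\prod_e e\cZ$. The conclusion $(\fW\otimes_\cZ A)_{N,\psi}\cong A$ is still correct, precisely because only finitely many $e$ survive after base change to the Noetherian ring $A$; but you should phrase the input blockwise rather than over all of $\cZ$. Second, in the fiber step you write ``essentially AIG representation'' where the required condition is that the fiber has \emph{essentially AIG dual} (Definition \ref{essentiallyAIGdual}); this is what the theory in \cite{h_whitt} actually provides for $\fW\otimes_\cZ\kappa(\fp)$.
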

While Theorem 6.3 in \cite{h_whitt} is stated only for primitive co-Whittaker modules, it is equivalent to Proposition \ref{dominance} above because $A$ is Noetherian, and thus has finitely many connected components, meaning $f_V(e)=0$ for all but finitely many primitive idempotents $e$. Supercuspidal support defines an equivalence relation on the set of co-Whittaker modules, under which a co-Whittaker $A[G]$-module $V$ and $\fW\otimes_{\cZ,f_V}A$ are equivalent. Thus, up to supercuspidal support, co-Whittaker $A[G]$-modules are precisely the $A$-valued points of $\Spec(\cZ)$, for Noetherian rings $A$. This will be crucial in the arguments of this paper.

\section{Functional Equation and Other Properties}
\label{functionalequationsection}

In this section we prove the functional equation (Theorem \ref{mainthm2}) for co-Whittaker modules. As in \cite{moss1} we will construct the gamma factor to be what it must in order to satisfy the functional equation for one particular Whittaker function, and then use the theory of the integral Bernstein center to show that the functional equation is satisfied for all Whittaker functions. 

Particularly important is the case where the coefficient ring is reduced and $\ell$-torsion free, since in this setting all the minimal primes have characteristic zero residue fields (if $\ell$ is not a zero-divisor, it does not live in any minimal prime). We will make repeated use of the following Lemma:

\begin{lemma}
\label{tensorproductreducedflat}
If $A$ and $B$ are reduced $\ell$-torsion free $W(k)$-algebras, then $A\otimes_{W(k)}B$ is also a reduced and $\ell$-torsion free $W(k)$-algebra.
\end{lemma}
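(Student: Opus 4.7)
The plan is to split the statement into its two assertions and handle them separately, both by reducing to basic facts about $W(k)$ as a discrete valuation ring with uniformizer $\ell$ and fraction field $\cK$ of characteristic zero.

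For the $\ell$-torsion freeness, the key observation is that over the DVR $W(k)$, being $\ell$-torsion free is equivalent to being flat. Since both $A$ and $B$ are $W(k)$-flat, and flatness is preserved under tensor product of algebras, $A\otimes_{W(k)} B$ is $W(k)$-flat, hence $\ell$-torsion free. This part is routine.

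For reducedness, the idea is to embed $A\otimes_{W(k)} B$ into a ring that we can recognize as reduced. First, $\ell$-torsion freeness gives inclusions $A\hookrightarrow A[1/\ell]=A\otimes_{W(k)}\cK$ and similarly for $B$. Flatness of $A$ and of $B[1/\ell]$ over $W(k)$ then yields an injection
\[
A\otimes_{W(k)} B \;\hookrightarrow\; A[1/\ell]\otimes_{W(k)} B[1/\ell] \;=\; A[1/\ell]\otimes_{\cK} B[1/\ell],
\]
where the last identification uses that the $W(k)$-action on $B[1/\ell]$ factors through $\cK$. Since a subring of a reduced ring is reduced, it suffices to prove the target is reduced.

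The target is a tensor product over the field $\cK$ of two reduced $\cK$-algebras (localizations of reduced rings remain reduced). The crucial point — and what I expect to be the main obstacle to state cleanly rather than to prove — is that $\cK$ has characteristic zero and is therefore perfect: over a perfect field, every reduced algebra is geometrically reduced, and the tensor product of a geometrically reduced algebra with any reduced algebra is reduced (see e.g.\ the standard commutative algebra references on geometric reducedness). Applying this to $A[1/\ell]$ and $B[1/\ell]$ shows $A[1/\ell]\otimes_{\cK}B[1/\ell]$ is reduced, completing the argument. The one place to take care is ensuring the embedding above is genuinely an injection; this is where the $\ell$-torsion free (equivalently, $W(k)$-flat) hypothesis on both factors is essential, as without it localization at $\ell$ could kill part of the tensor product.
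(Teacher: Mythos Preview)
Your proof is correct and follows essentially the same route as the paper: both establish $\ell$-torsion freeness via flatness over the DVR $W(k)$, then reduce the reducedness claim to showing that $(A\otimes_{W(k)}\cK)\otimes_{\cK}(B\otimes_{W(k)}\cK)$ is reduced, invoking the fact that tensor products of reduced algebras over a characteristic-zero field are reduced. The only cosmetic difference is that the paper phrases the embedding step as ``a flat $W(k)$-algebra $R$ is reduced if and only if $R\otimes_{W(k)}\cK$ is reduced'' and cites Bourbaki for the characteristic-zero fact, whereas you build the injection by localizing each factor and appeal to geometric reducedness over perfect fields; these are equivalent formulations of the same argument.
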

\begin{proof}
Being $\ell$-torsion free is equivalent to being flat as a module over $W(k)$. Since the tensor product of two flat modules is again flat, we have that $A\otimes_{W(k)}B$ is $\ell$-torsion free.

To show reducedness first observe that a flat $W(k)$-algebra $R$ is reduced if and only if $R\otimes_{W(k)}\cK$ is reduced, where $\cK$ denotes $\Frac(W(k))$. To see this note that $R$ embeds in the localization $S^{-1}R$ where $S=W(k)\setminus \{0\}$, and thus an element $\frac{r}{s}$ in the localization is nilpotent if and only if $r$ is nilpotent.

Applying this to the flat $W(k)$-algebra $R= A\otimes_{W(k)}B$, it suffices to prove that $(A\otimes_{W(k)}B)\otimes_{W(k)}\cK$ is reduced. But this equals
$(A\otimes_{W(k)}\cK)\otimes_{\cK}(B\otimes_{W(k)}\cK)$. We can now apply \cite[Ch 5, \S15, Thm 3]{bourbaki_ch4} which says that the tensor product of reduced algebras over a characteristic zero field is again reduced.
\end{proof}

\begin{lemma}
\label{zeta=1}
For $V$ in $\Rep_A(G_n)$ and $V'$ in $\Rep_B(G_m)$ both of Whittaker type, there exist $W$ in $\cW(V,\psi)$ and $W'$ in $\cW(V',\psi)$ such that $\Psi(W,W',X) =1$.
\end{lemma}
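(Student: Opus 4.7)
The plan is to construct $W\in\cW(V,\psi)$ and $W'\in\cW(V',\psi)$ so that the integrand $W(\begin{smallmatrix}g&0\\0&I_{n-m}\end{smallmatrix})\otimes W'(g)$ is a $\psi$-equivariant bump function supported on $N_m K\subset G_m$ for a sufficiently small compact open subgroup $K\subset GL_m(\cO_F)$. Only the $r=0$ term of the defining sum will then contribute, and the resulting integral will be a volume in $R$ that we can normalize to $1$ by scaling $W$.

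For $W'$, use that $V'$ is of Whittaker type to get a surjective Whittaker functional $\lambda'\colon V'\twoheadrightarrow B$; pick $v'\in V'$ with $\lambda'(v')=1$ and set $W':=W'_{v'}$, so that $W'(1)=1$. By smoothness, $W'$ is right-invariant under some compact open $K_0\subset G_m$. For $W$, use the Schwartz space: since $V$ is of Whittaker type, $V^{(n)}\cong A$, and so $\cS(V)=(\Phi^+)^{n-1}V^{(n)}\cong\cInd_{N_n}^{P_n}\psi$ sits inside $V|_{P_n}$ by \cite[Lemma 6.3.5]{eh}, which means every compactly supported $\psi$-equivariant function on $N_n\backslash P_n$ arises as $W|_{P_n}$ for some $W\in\cW(V,\psi)$. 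Choose a compact open subgroup $L\subset P_n$ small enough that $\psi|_{N_n\cap L}=\mathbf{1}$, that $K:=L\cap G_m$ (via the closed embedding $g\mapsto(\begin{smallmatrix}g&0\\0&I_{n-m}\end{smallmatrix})$) lies in $K_0\cap GL_m(\cO_F)$, and that $L$ is transverse to the embedded $G_m$. Let $W\in\cW(V,\psi)$ realize the $\psi$-equivariant function on $P_n$ supported on $N_n L$ with constant value $1$ on $L$.

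Using the identity $N_n\cap(\text{embedded }G_m)=\text{embedded }N_m$ (with matching $\psi$-characters) together with the transversality of $L$, one checks that $W(\begin{smallmatrix}g&0\\0&I_{n-m}\end{smallmatrix})=\psi(n)$ for $g=nk\in N_m K$ and $0$ otherwise. The integrand is thus supported on $N_m K\subset\{v(\det g)=0\}$, and the $\psi$-twists of $W$ and $W'$ combine to give the constant $1$ on $N_m\backslash N_m K$, so $\Psi(W,W',X)=\Vol(N_m\backslash N_m K)\in R$. Since this volume is a power of $q$ and $p$ is invertible in any $W(k)$-algebra, it is a unit in $R$, and scaling $W$ by its inverse yields $\Psi(W,W',X)=1$. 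The main obstacle is the Schwartz-space step: the structural identification $\cS(V)\cong\cInd_{N_n}^{P_n}\psi$ together with the non-archimedean transversality argument producing an $L\subset P_n$ with $L\cap G_m=K$. Granting these, the remaining argument is a direct volume computation.
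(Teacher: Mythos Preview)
Your approach is correct and shares the key structural input with the paper---namely that $\cInd_{N_n}^{P_n}\psi$ is realized inside $\cW(V,\psi)|_{P_n}$---but takes a more direct route. The paper follows \cite{jps2}: it uses the decomposition $G_m=P_mZ_mK$ with $K=GL_m(\cO_F)$ and invokes the Schwartz/Kirillov embedding (via \cite[Prop~2.8]{moss1}) for \emph{both} $V$ and $V'$. It takes $W|_{P_n}$ to be the characteristic function of $P_m^{(0)}K'$ for a small $K'\subset K$ fixing $W'$, so that only the $r=0$ coefficient survives and equals $[K:K']\int_{N_m\backslash P_m^{(0)}}(1\otimes\phi')$; it then uses the freedom in $\phi'=W'|_{P_m}$ to normalize this to $1$. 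Your version avoids the $P_mZ_mK$ decomposition and needs the Kirillov embedding only for $V$; from $V'$ you use nothing beyond the existence of $W'$ with $W'(1)=1$ and right-invariance under a small subgroup. The ``transversality'' step you flag is genuine but short: for $L=P_n\cap(I+\varpi^N M_n(\cO_F))$ and $K=G_m\cap(I+\varpi^N M_m(\cO_F))$ one checks $N_nL\cap\iota(G_m)=\iota(N_mK)$ by induction on $n-m$, the inductive step following from the identity $N_nL=\{(\begin{smallmatrix}h&*\\0&1\end{smallmatrix}):h\in N_{n-1}K'_{n-1}\}$ with $K'_{n-1}=G_{n-1}\cap(I+\varpi^N M_{n-1}(\cO_F))$. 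One citation note: the restriction-to-$P_n$ statement you need (for Whittaker type, not just co-Whittaker) is \cite[Prop~2.8]{moss1}; the reference \cite[Lemma~6.3.5]{eh} concerns generation of co-Whittaker modules by $\cS(V)$ and does not directly give you the Kirillov surjectivity you invoke.
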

\begin{proof}
The proof follows that in \cite[(2.7) p.394]{jps2}. If $M$ denotes the standard parabolic of size $(m-1,1)$, let $K=GL_m(\cO_F)$ the maximal compact subgroup of $G_m$, let $Z_m$ denote the scalar matrices. Let $P_m^{(r)}$ and $M^{(r)}$ denote the subsets of matrices with determinant having valuation $r$. Recall that $G_m=MK$ \cite[3.6 Lemma]{b-zI}, and $M=P_mZ_m$. Therefore, we have $\Psi(W,W',X) = \sum_rc_r(W,W')X^r,$
where
\begin{align*}
&c_r(W,W')= \int_{K}\int_{N_m\backslash M^{(r)}}W\left(\begin{smallmatrix}mk&0\\0&I_{n-m} \end{smallmatrix}\right)\otimes W'(mk)dmdk
\end{align*}
Given $\phi\in \cInd_{N_n}^{P_n}\psi$, and $\phi'$ in $\cInd_{N_m}^{P_m}\psi$, \cite[Prop 2.8]{moss1} tells us we can choose $W$, $W'$ so that $W|_{P_n}=\phi$ and $W'|_{P_m}=\phi'$. Suppose $K'$ is a compact open subgroup of $G_m$, with $p$-power index in $K$, such that $W'$ is invariant on the right under $K'$. Take $\phi$ to be the characteristic function of the subset $P_m^{(0)}K'$ of $P_n$ (modulo $N_m$). Then if $r=0$,
$$c_r(W,W')=[K:K']\int_{N_m\backslash P_m^{(0)}}(1\otimes\phi'(p))dp,$$ and if $r>0$, $c_r(W,W')=0$. Since $[K:K']$ is a unit in $R$, we may choose $\phi'$ so that $\int_{N_m\backslash P_m^{(0)}}\phi'(p)dp$ equals $[K:K']^{-1}$.
\end{proof} 

Let $w_{n,m} = \text{diag}(I_{n-m},w_m)$ where $w_m$ is the anti-diagonal $m\times m$ matrix with $1$'s on the anti-diagonal and let $g^{\iota}:=^tg^{-1}$. For any function $W$ on $G$ we denote by $\widetilde{W}$ the function $\widetilde{W}(g) = W(w_ng^{\iota})$. If $V'$ is co-Whittaker is satisfies Schur's lemma, and therefore has a central character, which we denote $\omega$.

\begin{theorem}
\label{mainthm2}
Suppose $A$ and $B$ are Noetherian $W(k)$-algebras, and suppose $V$, $V'$ are co-Whittaker $A[G_n]$- and $B[G_m]$-modules respectively. Then there exists a unique element $\gamma(V\times V',X,\psi)$ of $S^{-1}(R[X,X^{-1}])$ such that
$$\Psi(W,W',X;j)\gamma(V\times V',X,\psi)\omega_{V'}(-1)^{n-1} = \Psi(w_{n,m}\widetilde{W},\widetilde{W'},\frac{q^{n-m-1}}{X};n-m-1-j)$$ for any $W\in \cW(V,\psi)$, $W'\in \cW(V',\psi)$ and for any $0\leq j \leq n-m-1$.
\end{theorem}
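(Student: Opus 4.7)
The plan is to follow the strategy suggested in the opening of this section: define $\gamma(V\times V',X,\psi)$ by forcing the functional equation for one distinguished pair of Whittaker functions, then verify that it holds for all $W,W',j$ by reducing first to the universal case $\fW\otimes_\cZ A$, $\fW\otimes_\cZ B$ (via Proposition~\ref{dominance}) and then to characteristic-zero fibers using the structure of the integral Bernstein center.

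By Lemma~\ref{zeta=1} there exist $W_0\in\cW(V,\psi)$ and $W_0'\in\cW(V',\psi)$ with $\Psi(W_0,W_0',X;0)=1$. Since $1$ is not a zero-divisor in $S^{-1}(R[X,X^{-1}])$, at most one $\gamma$ satisfies the $j=0$ instance of the functional equation for this pair, so uniqueness is automatic and the only possible candidate is
\begin{equation*}
\gamma(V\times V',X,\psi)\;:=\;\omega_{V'}(-1)^{n-1}\,\Psi\!\left(w_{n,m}\widetilde{W_0},\widetilde{W_0'},\tfrac{q^{n-m-1}}{X};\,n-m-1\right),
\end{equation*}
which lies in $S^{-1}(R[X,X^{-1}])$ by Theorem~\ref{rationality2} (using $\omega_{V'}(-1)^2=1$). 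It remains to verify that this $\gamma$ satisfies the functional equation for every $W,W'$ and every $0\le j\le n-m-1$.

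For the verification I would reduce to the universal case. Since two co-Whittaker modules sharing the same supercuspidal support have the same Whittaker space (established in \S\ref{conversetheoremsection}), the integrals $\Psi(W,W',X;j)$ depend only on the supercuspidal support maps $f_V:\cZ\to A$ and $f_{V'}:\cZ\to B$. By Proposition~\ref{dominance} we may replace $(V,V')$ by $(\fW\otimes_{\cZ,f_V}A,\;\fW\otimes_{\cZ,f_{V'}}B)$, and because the formal series $\Psi(-,-,X;j)$ commute with base change of the coefficient ring, it suffices to prove the identity in the universal situation where $V=e\fW$ and $V'=e'\fW$ for primitive idempotents $e,e'$ belonging to the blocks of $V$ and $V'$, with coefficient rings $e\cZ$ and $e'\cZ$. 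In this setting $R=e\cZ\otimes_{W(k)}e'\cZ$, and the integral Bernstein center theory cited in \S\ref{notationconventions} supplies the fact that each $e\cZ$ is a reduced, $\ell$-torsion free, finite-type $W(k)$-algebra; by Lemma~\ref{tensorproductreducedflat}, $R$ inherits both properties and therefore embeds into the product of its localizations at its minimal primes, each of which has residue field of characteristic zero.

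Both sides of the proposed functional equation lie in $S^{-1}(R[X,X^{-1}])$, so checking equality reduces to checking it after specialization at every minimal prime $\fp$ of $R$. At each such $\fp$, the fibers of $e\fW$ and $e'\fW$ become admissible generic representations with essentially AIG dual over the characteristic-zero field $\kappa(\fp)$, and the classical functional equation of Jacquet–Piatetskii-Shapiro–Shalika~\cite{jps2} provides the identity with the classical gamma factor, which by the uniqueness statement applied fiberwise must coincide with the image of our candidate $\gamma$. Since the minimal-prime localizations jointly detect equality in $R$ (and hence in $S^{-1}(R[X,X^{-1}])$), the functional equation propagates back to the universal ring and then to the original $(V,V')$.

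The main obstacle I expect is the simultaneous handling of the parameter $j$. The classical proof treats all $j\in\{0,\dots,n-m-1\}$ uniformly via a unipotent change of variables linking $\Psi(W,W',X;j)$ and $\Psi(w_{n,m}\widetilde{W},\widetilde{W'},q^{n-m-1}/X;n-m-1-j)$; transplanting this change of variables integrally—so that each intermediate expression remains in $S^{-1}(R[X,X^{-1}])$ rather than in some larger completion—is the most delicate technical point. Once established at the characteristic-zero fibers and combined with the injection $R\hookrightarrow\prod_{\fp}R_\fp$, the single universal $\gamma$ will govern all $j$ simultaneously, completing the proof.
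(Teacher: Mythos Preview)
Your proposal is correct and follows essentially the same strategy as the paper: define $\gamma$ via the test pair from Lemma~\ref{zeta=1}, pass to the universal representations $e\fW_n$, $e'\fW_m$ over $e\cZ\otimes e'\cZ'$ (which is reduced and $\ell$-torsion free by Lemma~\ref{tensorproductreducedflat}), verify the functional equation there by specializing to characteristic-zero minimal primes and invoking the classical result of \cite{jps2}, and then descend to arbitrary $A,B$ via base change and Proposition~\ref{dominance}. The paper organizes this slightly differently---it first isolates an intermediate lemma (Lemma~\ref{intermediatefnleqn}) valid for arbitrary reduced $\ell$-torsion-free coefficients and then specializes to the Bernstein center---and it also explicitly handles the non-primitive case by decomposing into finitely many blocks and summing the resulting gamma factors, a step you should add; your concern about the parameter $j$ is unfounded, since Theorem~\ref{rationality2} already places both sides in $S^{-1}(R[X,X^{-1}])$ for every $j$, so the fiberwise verification disposes of all $j$ simultaneously without any need to transplant the unipotent change of variables integrally.
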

Our notation in this theorem is slightly different from that in \cite{jps2}, and follows \cite[2.1 Thm]{cogshap}.

When $A$ and $B$ are reduced and $\ell$-torsion free, and $V$, $V'$ are Whittaker type, admissible, and finitely generated over $A[G_n]$ and $B[G_m]$, respectively, formal commutative algebra allows us to reduce the functional equation to the classical setting of a characteristic zero field (Lemma \ref{intermediatefnleqn} below). By assuming further that $V$ and $V'$ are co-Whittaker, we are able to go beyond the case where $A$ and $B$ are reduced and $\ell$-torsion free by using the description of $\Spec(\cZ)$ as a moduli space for co-Whittaker modules (Proposition \ref{dominance}), together with compatibility of the gamma factor with change of base ring.

\begin{proof}
First, we define a candidate for the gamma factor. Because $V$ and $V'$ are Whittaker type, this is done exactly as in \cite[\S 4.3]{moss1}, using Lemma \ref{zeta=1} in place of \cite[Prop 4.8]{moss1}. 

Assume for now that $V$ and $V'$ are primitive co-Whittaker modules. We will remove this assumption at the end of the proof.

Second, we prove the following intermediate Lemma:
\begin{lemma}
\label{intermediatefnleqn}
Suppose $A$ and $B$ are reduced $\ell$-torsion free $W(k)$-algebras, and $V$, $V'$ are Whittaker type, admissible, and finitely generated over $A[G_n]$ and $B[G_m]$, respectively. Then the functional equation of Theorem \ref{mainthm2} holds.
\end{lemma}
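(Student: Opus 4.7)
The plan is to reduce the functional equation over $R = A\otimes_{W(k)}B$ to the classical case of representations over a characteristic zero field, exploiting the hypothesis that $A$ and $B$ are reduced and $\ell$-torsion free.

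First, Lemma \ref{tensorproductreducedflat} gives that $R$ is itself reduced and $\ell$-torsion free. In a reduced ring the intersection of the minimal primes is zero, so $R$ embeds into $\prod_{\fp}\kappa(\fp)$ as $\fp$ ranges over the minimal primes of $R$. Since $\ell$ is a non-zero-divisor in $R$ and the associated primes of a reduced ring are its minimal primes, $\ell$ lies in no minimal prime, so every such $\kappa(\fp)$ has characteristic zero. Any $s\in S$ is a non-zero-divisor in $R[X,X^{-1}]$ because its leading and trailing coefficients are units in $R$; localizing the embedding above accordingly yields an injection $S^{-1}(R[X,X^{-1}])\hookrightarrow \prod_\fp S_\fp^{-1}(\kappa(\fp)[X,X^{-1}])$, where $S_\fp$ denotes the image of $S$. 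It therefore suffices to prove the asserted functional equation after base change to $\kappa(\fp)$ for every minimal prime $\fp$.

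Second, I would construct a candidate gamma factor exactly as in \cite[\S4.3]{moss1}: choose $W_0\in \cW(V,\psi)$ and $W_0'\in \cW(V',\psi)$ with $\Psi(W_0,W_0',X)=1$, available by Lemma \ref{zeta=1}, and define $\gamma(V\times V',X,\psi)$ as the unique element of $S^{-1}(R[X,X^{-1}])$ forcing the functional equation for this single pair at $j=0$. This construction commutes with base change along $A\to \kappa(\fp)$ and $B\to \kappa(\fp)$: the base changes $V_\fp$, $V'_\fp$ are admissible of Whittaker type over $\kappa(\fp)$, the base-changed $W_0$, $W_0'$ still have zeta integral $1$, and the base-changed $\gamma$ therefore coincides with the classical gamma factor of $V_\fp\times V'_\fp$ in the sense of \cite{jps2}.

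Over the characteristic zero field $\kappa(\fp)$, the functional equation for admissible Whittaker type representations, valid for all Whittaker functions $W$, $W'$ and all $0\le j \le n-m-1$, is the content of \cite[\S 2.7]{jps2}. Although stated there over $\CC$, it extends to an arbitrary characteristic zero field by a standard descent: any single instance of the identity involves only finitely many Hecke-algebra matrix entries, which lie in a countably generated subfield of $\kappa(\fp)$, and any such subfield embeds into $\CC$; descent along such an embedding transfers the complex identity to $\kappa(\fp)$. Combined with the injectivity of the first step, this yields the functional equation over $R$. The main obstacle I expect is the bookkeeping around base change: one must verify that admissibility, Whittaker type, and $A[G_n]$-finite generation are preserved under $A\to \kappa(\fp)$ (and analogously for $B$), and that the formation of $\cW(V,\psi)$, of the zeta integrals $\Psi(W,W',X;j)$, and of the candidate $\gamma$ all commute with these base changes. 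These compatibilities are routine but warrant care, especially as $R$ is not assumed Noetherian; once they are in place the reduction to the classical theorem over $\CC$ is essentially formal.
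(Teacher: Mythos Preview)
Your proposal is correct and follows essentially the same route as the paper: construct the candidate $\gamma$ from Lemma \ref{zeta=1}, use Lemma \ref{tensorproductreducedflat} to see that $R$ is reduced and $\ell$-torsion free so that its minimal primes have characteristic zero residue fields, and then check the functional equation at each such residue field by invoking the classical result of \cite{jps2}. The only cosmetic difference is that the paper passes from $\CC$ to a general characteristic zero field by choosing an embedding $\CC\hookrightarrow\overline{\kappa(\fp)}$ rather than by descending to a countably generated subfield, and it dispatches your base-change bookkeeping by citing \cite[\S4.2, Thm 4.11]{moss1}.
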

\begin{proof}
The arguments of \cite[Thm 4.11]{moss1} carry over to this setting. Since the zeta integrals $\Psi(W,W',X;j)$ all live in $S^{-1}R[X,X^{-1}]$ we can make sense of both sides of the functional equation. Any characteristic zero point $R\rightarrow \kappa$ gives characteristic zero points of $A$ and $B$. Fix an algebraic closure $\overline{\kappa}$ and choose an embedding $\CC\hookrightarrow \overline{\kappa}$. By applying the arguments of \cite{jps2} to $V\otimes\overline{\kappa}$ and $V'\otimes\overline{\kappa}$, where $X$ replaces the variable $q^{-s+\frac{n-m}{2}}$, we find that the image of $$\Psi(W,W',X;j)\gamma(V\times V',X,\psi)\omega_{V'}(-1)^{n-1} -\Psi(w_{n,m}\widetilde{W},\widetilde{W'},\frac{q^{n-m-1}}{X};n-m-1-j)$$ in $R\rightarrow \kappa$ equals zero. Using Lemma \ref{tensorproductreducedflat}, we have that $R$ is reduced and $\ell$-torsion free, hence its minimal primes have characteristic zero residue fields, and this difference  equals zero modulo every minimal prime. By reducedness, the minimal primes have trivial intersection.
\end{proof}

Third, we focus on removing the hypothesis that $A$ is reduced and $\ell$-torsion free. To do this we mimic the argument of \cite[\S 5]{moss1}, and therefore content ourselves with a sketch. Let $\cZ$ be the center of $\Rep_{W(k)}(G_n)$, let $\cZ'$ be the center of $\Rep_{W(k)}(G_m)$. It is proved in \cite[Thm 10.8]{h_bern} that for primitive idempotents $e$ and $e'$ in $\cZ$ and $\cZ'$ respectively, $e\cZ$ and $e'\cZ'$ are reduced and $\ell$-torsion free $W(k)$-algebras. Lemma \ref{tensorproductreducedflat} implies that $e\cZ\otimes_{W(k)}e'\cZ'$ is reduced and $\ell$-torsion free, so in particular the hypotheses of the theorem hold for the pair of representations $e\fW_n$ and $e'\fW_m$. We may therefore define the universal gamma factor $\gamma(e\fW_n\times e'\fW_m,X,\psi)\in S^{-1}(e\cZ\otimes e'\cZ')[X,X^{-1}]$.

Now, given primitive co-Whittaker modules $V$ in $e\Rep_{W(k)}(G_n)$ and $V'$ in $e'\Rep_{W(k)}(G_m)$ over any coefficient rings $A$ and $B$ which are Noetherian $W(k)$-algebras, we have supercuspidal supports $f_V:e\cZ\rightarrow A$ and $f_{V'}:e'\cZ'\rightarrow B$ such that $e\fW_n\otimes_{e\cZ,f_V}A$ dominates $V$ and $e'\fW_m\otimes_{e'\cZ',f_{V'}}B$ dominates $V'$.

Because the formation of zeta integrals and gamma factors commute with change of base ring \cite[\S 4.2]{moss1}, the image of $\gamma(e\fW_n\times e'\fW_m,X,\psi)$ in the map $S^{-1}(e\cZ\otimes e'\cZ')[X,X^{-1}]\rightarrow S^{-1}R[X,X^{-1}]$ induced by $f_V\otimes f_{V'}$ equals $\gamma(V\times V',X,\psi)$.

Since $e\fW_n\otimes_{e\cZ,f_V}A$ dominates $V$, they have the same Whittaker spaces, and thus share all the same zeta integrals, and the same goes for $V'$. Therefore, $\gamma(V\times V',X,\psi)$ satisfies the functional equation for all $W\in \cW(V,\psi)$ and all $W'$ in $\cW(V',\psi)$.

We now remove the hypothesis that $V$ and $V'$ are primitive. Since $A$ and $B$ are Noetherian, $V$ (resp. $V'$) is a finite direct sum of representations $e_iV$ (resp. $f_iV'$) which are co-Whittaker $e_iA[G]$ (resp. $f_iB[G]$)-modules, where $e_i$ (resp. $f_i$) are distinct primitive orthogonal idempotents of $\cZ$ (resp. $\cZ'$, the center of $\Rep_{W(k)}(G_m)$). By abuse of notation we are identifying the $e_i$'s and $f_i'$ with their images $f_V(e_i)$ and $f_{V'}(f_i)$ in $A$ and $B$, respectively. If $W$ (resp. $W'$ is in $\cW(V,\psi)$ (resp. $\cW(V',\psi)$), then $W = \sum_i e_iW$ and $W' = \sum_i f_iW$. The functions $\Psi(-,-,X;j)$ and $\Psi(w_{n,m}\widetilde{(-)},\widetilde{(-)}, \frac{q^{n-m-1}}{X};n-m-1-j)$ are linear in each input. It now follows from the primitive case, together with the fact that for $i\neq j$ $e_ie_j\otimes 1$ and $1\otimes f_if_j$ equal zero in $S^{-1}R[X,X^{-1}]$, that $\Psi(W,W',X)$ satisfies a functional equation as in Theorem \ref{mainthm2}, with functional constant given by $\sum_{i,j}\gamma(e_iV\times f_j V', X, \psi)$. Since there are finitely many components, we can make the identification $$S^{-1}R[X,X^{-1}] = \bigoplus_{i,j}S_{ij}^{-1}(e_iA\otimes f_jB)[X,X^{-1}],$$ where $S_{ij}$ denotes the subset of $(e_iA\otimes f_jB)[X,X^{-1}]$ consisting of polynomials whose first and last coefficients are units. It follows that $\gamma(V\times V',X,\psi)$ is in $S^{-1}R[X,X^{-1}]$, and uniqueness follows from uniqueness at each component.
\end{proof}

We can now define a universal gamma factor. If $\fW_n:=\cInd_{N_n}^{G_n}\psi$ and $\fW_m:=\cInd_{N_m}^{G_m}\psi$, we form the sum
$$\gamma(\fW_n\times\fW_m,X,\psi) :=\sum_{e,e'}\gamma(e\fW_n\times e'\fW_m,X,\psi)\in (\cZ\otimes \cZ')[[X]][X^{-1}],$$ where the sum runs over all primitive idempotents of $\cZ$ and $\cZ'$, respectively (here $\cZ'$ is the center of $\Rep_{W(k)}(G_m)$). Since $\cZ$ and $\cZ'$ are not Noetherian, we do not have a functional equation, or even rationality, for $\gamma(\fW_n\times\fW_m,X,\psi)$, but we have the following universal property:
\begin{corollary}
\label{universalgamma}
Let $A$, $B$ be any Noetherian $W(k)$-algebras, let $V$ be a co-Whittaker $A[G_n]$-module and $V'$ a co-Whittaker $B[G_m]$-module, having supercuspidal supports $f_V$ and $f_{V'}$. Then $$\gamma(V\times V',X,\psi) = (f_V\otimes f_{V'})\big(\gamma(\fW_n\times\fW_m,X,\psi)\big).$$
\end{corollary}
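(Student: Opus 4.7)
The plan is to deduce the corollary from the primitive case, which was essentially already verified during the proof of Theorem \ref{mainthm2}, combined with the bilinear decomposition along orthogonal idempotents used at the end of that proof.

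First, I would unravel what happens on the right-hand side. Since $A$ and $B$ are Noetherian, only finitely many primitive idempotents $e_1,\dots,e_r$ of $\cZ$ satisfy $f_V(e_i)\neq 0$, and only finitely many $f_1,\dots,f_s$ of $\cZ'$ satisfy $f_{V'}(f_j)\neq 0$. Hence in the (possibly infinite) defining sum for $\gamma(\fW_n\times\fW_m,X,\psi)$, every summand is annihilated by $f_V\otimes f_{V'}$ except those indexed by pairs $(e_i,f_j)$, so
$$(f_V\otimes f_{V'})\bigl(\gamma(\fW_n\times\fW_m,X,\psi)\bigr)=\sum_{i,j}(f_V\otimes f_{V'})\bigl(\gamma(e_i\fW_n\times f_j\fW_m,X,\psi)\bigr).$$

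Second, in the proof of Theorem \ref{mainthm2} it was already shown that for primitive co-Whittaker modules $V_i:=e_iV$ and $V'_j:=f_jV'$, the image of $\gamma(e_i\fW_n\times f_j\fW_m,X,\psi)$ under the map $S^{-1}(e_i\cZ\otimes f_j\cZ')[X,X^{-1}]\to S^{-1}(e_iA\otimes f_jB)[X,X^{-1}]$ induced by $f_{V_i}\otimes f_{V'_j}$ equals $\gamma(V_i\times V'_j,X,\psi)$. This step rests on Proposition \ref{dominance}: since $e_i\fW_n\otimes_{e_i\cZ,f_{V_i}}A$ dominates $V_i$, the two share the same Whittaker space and hence the same zeta integrals, and the formation of zeta integrals and gamma factors commutes with change of base ring.

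Third, the analogous decomposition of the left-hand side,
$$\gamma(V\times V',X,\psi)=\sum_{i,j}\gamma(V_i\times V'_j,X,\psi),$$
is precisely the bilinearity argument given at the end of the proof of Theorem \ref{mainthm2}: expanding $W=\sum_i e_iW$ and $W'=\sum_j f_jW'$, using linearity of both $\Psi(-,-,X;j)$ and $\Psi(w_{n,m}\widetilde{(-)},\widetilde{(-)},q^{n-m-1}/X;n-m-1-j)$ in each input, and exploiting the orthogonality relations $e_ie_{i'}=0$ and $f_jf_{j'}=0$ for distinct indices, decomposes everything along the summand decomposition $S^{-1}R[X,X^{-1}]=\bigoplus_{i,j}S_{ij}^{-1}(e_iA\otimes f_jB)[X,X^{-1}]$. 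Combining the three displays yields the claimed identity.

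The only genuinely subtle point is the first step: one must ensure that the formal infinite sum defining $\gamma(\fW_n\times\fW_m,X,\psi)$ is well-defined under $f_V\otimes f_{V'}$, and this is forced by the Noetherian hypothesis on $A$ and $B$, which makes $f_V$ and $f_{V'}$ factor through finite products of the blocks $e\cZ$ and $e'\cZ'$. Once this finiteness is in hand, the corollary is a bookkeeping consequence of what has already been proved.
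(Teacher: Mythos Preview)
Your proposal is correct and matches the paper's approach: the paper states the corollary without proof, intending it as an immediate consequence of the two ingredients established in the proof of Theorem \ref{mainthm2} (the primitive-case identification of $\gamma(e_iV\times f_jV',X,\psi)$ with the image of $\gamma(e_i\fW_n\times f_j\fW_m,X,\psi)$ under $f_V\otimes f_{V'}$, and the decomposition $\gamma(V\times V',X,\psi)=\sum_{i,j}\gamma(e_iV\times f_jV',X,\psi)$), together with the finiteness of surviving idempotents forced by the Noetherian hypothesis. Your write-up simply makes these steps explicit.
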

For any $A[G]$-module $V$, let $V^{\iota}$ denote the $A[G]$-module whose underlying $A$-module is given by $V$ and on which $G$ acts by pre-composition with the involution $g\mapsto g^{\iota}$. Following \cite[Lemma 4.4]{moss1}, $V^{(n)}$ is isomorphic to $(V^{\iota})^{(n)}$, from which it follows immediately that $V^{\iota}$ is co-Whittaker, with Whittaker space $\{\widetilde{W}:W\in \cW(V,\psi)\}$.
\begin{corollary}
\label{gammaisaunit}
For $V\in \Rep_A(G_n)$ and $V'\in \Rep_B(G_m)$ co-Whittaker modules and $A$, $B$ any Noetherian $W(k)$-algebras, $\gamma(V\times V',X)$ is a unit in $S^{-1}(R[X,X^{-1}])$ and $\gamma(V\times V',X,\psi)^{-1} = \gamma(V^{\iota}\times(V')^{\iota},\frac{q^{n-m-1}}{X},\psi^{-1})$.
\end{corollary}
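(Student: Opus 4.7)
The plan is to prove both statements simultaneously by applying the functional equation of Theorem \ref{mainthm2} twice and composing the resulting identities. The discussion preceding this corollary establishes that $V^\iota$ and $(V')^\iota$ are themselves co-Whittaker, with Whittaker spaces $\{\widetilde W : W \in \cW(V, \psi)\}$ and $\{\widetilde{W'} : W' \in \cW(V', \psi)\}$ respectively, so Theorem \ref{mainthm2} applies to the pair $(V^\iota, (V')^\iota)$ with the additive character $\psi^{-1}$. As a preliminary step, I would record a few matrix identities: from $(AB)^\iota = A^\iota B^\iota$, $w_n^\iota = w_n = w_n^{-1}$, $w_{n,m}^\iota = w_{n,m} = w_{n,m}^{-1}$, and $w_n^2 = w_{n,m}^2 = I$, a direct computation shows that $W \mapsto \widetilde W$ is an involution and $w_{n,m}\widetilde{(w_{n,m}\widetilde W)} = W$, while a short calculation on $N_n$ shows $\widetilde W$ transforms from the left by $\psi^{-1}$, confirming that $w_{n,m}\widetilde W \in \cW(V^\iota, \psi^{-1})$.

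With these identities in hand, I would apply Theorem \ref{mainthm2} to $(V, V')$ at variable $X$ and index $j = 0$:
$$\Psi(W, W', X; 0)\,\gamma(V \times V', X, \psi)\,\omega_{V'}(-1)^{n-1} = \Psi(w_{n,m}\widetilde W, \widetilde{W'}, \tfrac{q^{n-m-1}}{X}; n-m-1),$$
and then apply it to $(V^\iota, (V')^\iota)$ with character $\psi^{-1}$, Whittaker functions $w_{n,m}\widetilde W$ and $\widetilde{W'}$, variable $Y = \tfrac{q^{n-m-1}}{X}$, and index $j = n-m-1$. After simplifying the right-hand side using $w_{n,m}\widetilde{(w_{n,m}\widetilde W)} = W$, $\widetilde{\widetilde{W'}} = W'$, and $\tfrac{q^{n-m-1}}{Y} = X$, this yields
$$\Psi(w_{n,m}\widetilde W, \widetilde{W'}, \tfrac{q^{n-m-1}}{X}; n-m-1)\,\gamma(V^\iota \times (V')^\iota, \tfrac{q^{n-m-1}}{X}, \psi^{-1})\,\omega_{(V')^\iota}(-1)^{n-1} = \Psi(W, W', X; 0).$$

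Substituting the first identity into the left-hand side of the second, and using that $\omega_{(V')^\iota} = \omega_{V'}^{-1}$ so that the two sign factors cancel, produces
$$\Psi(W, W', X; 0)\,\gamma(V \times V', X, \psi)\,\gamma(V^\iota \times (V')^\iota, \tfrac{q^{n-m-1}}{X}, \psi^{-1}) = \Psi(W, W', X; 0).$$
Finally, I would invoke Lemma \ref{zeta=1} to choose $W \in \cW(V, \psi)$ and $W' \in \cW(V', \psi)$ with $\Psi(W, W', X; 0) = 1$. Canceling then gives
$$\gamma(V \times V', X, \psi)\,\gamma(V^\iota \times (V')^\iota, \tfrac{q^{n-m-1}}{X}, \psi^{-1}) = 1,$$
which simultaneously shows that $\gamma(V \times V', X, \psi)$ is a unit in $S^{-1}(R[X, X^{-1}])$ and identifies its inverse.

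The main obstacle is entirely clerical: it is the careful tracking of the involution $\widetilde{(\cdot)}$ and the induced switch from $\psi$ to $\psi^{-1}$ under left $N_n$-translation, ensuring that $w_{n,m}\widetilde W$ is genuinely a Whittaker function for $V^\iota$ with character $\psi^{-1}$, and matching the indices $j$ and $n-m-1-j$ with the variables $X$ and $\tfrac{q^{n-m-1}}{X}$ correctly when composing the two applications of Theorem \ref{mainthm2}. Conceptually, the proof is nothing more than the statement that the functional equation is itself an involution, promoted from Whittaker functions to gamma factors by Lemma \ref{zeta=1}.
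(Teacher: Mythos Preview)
Your proposal is correct and follows essentially the same approach as the paper: apply the functional equation of Theorem \ref{mainthm2} once to $(V,V')$ and once to $(V^\iota,(V')^\iota)$, compose, and then cancel using the Whittaker functions from Lemma \ref{zeta=1}. The paper's proof differs only cosmetically---it fixes the Lemma \ref{zeta=1} pair from the outset and then substitutes $X\mapsto \tfrac{q^{n-m-1}}{X}$ before invoking the second functional equation---and is in fact less explicit than yours about the $w_{n,m}$ factor and the index $j$.
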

\begin{proof}
Let $W$ and $W'$ be the Whittaker functions guaranteed by Lemma \ref{zeta=1}. The functional equation reads $$\Psi(W,W',X)\gamma(V\times V', X,\psi)\omega_{V'}(-1)^m = \Psi(\widetilde{W},\widetilde{W'},\frac{q^{n-m-1}}{X}).$$ Replacing $X$ with $\frac{q^{n-m-1}}{X}$ we have
$$\Psi(W,W',\frac{q^{n-m-1}}{X})\gamma(V\times V', \frac{q^{n-m-1}}{X},\psi)\omega_{V'}(-1)^m \\= \Psi(\widetilde{W},\widetilde{W'},X).$$ Now multiplying through by $\gamma(V^{\iota}\times(V')^{\iota},X,\psi^{-1})\omega_{(V')^{\iota}}(-1)^m$ and noticing that  $\omega_{(V')^{\iota}}=\omega_{V'}^{-1}$ we get:
\begin{align*}
\Psi(W,W',\frac{q^{n-m-1}}{X})\gamma(V\times V', \frac{q^{n-m-1}}{X},\psi)\gamma(V^{\iota}\times(V')^{\iota},X,\psi^{-1}) = \Psi(W,W',\frac{q^{n-m-1}}{X}),
\end{align*}
By Lemma \ref{zeta=1} we have $\gamma(V\times V', \frac{q^{n-m-1}}{X},\psi)\gamma(V^{\iota}\times(V')^{\iota},X,\psi^{-1}) = 1$.
\end{proof}

\section{A Converse Theorem for $GL(n)\times GL(n-1)$}
\label{conversetheoremsection}

The main result of this chapter is that the collection of gamma factors of pairs uniquely determines the supercuspidal support of a co-Whittaker family.

\begin{theorem}
\label{converse}
Let $A$ be a finite-type $W(k)$-algebra which is reduced and $\ell$-torsion free, and let $\cK=\Frac(W(k))$. Suppose $V_1$ and $V_2$ are two co-Whittaker $A[G_n]$-modules. There is a finite extension $\cK'$ of $\cK$ with ring of integers $\cO$ such that, if $\gamma(V_1\times V',X,\psi) = \gamma(V_2\times V',X,\psi)$ for all co-Whittaker $\cO[G_{n-1}]$-modules $V'$, then $\cW(V_1,\psi)=\cW(V_2,\psi)$.
\end{theorem}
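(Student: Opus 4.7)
The plan is to follow the classical strategy of Henniart \cite{hen_converse} and Jacquet--Piatetski-Shapiro--Shalika \cite[Thm 7.5.3]{jps1}, adapted to families by combining the functional equation of Theorem \ref{mainthm2} with the key vanishing lemma (Theorem \ref{rikka2}) advertised in the introduction. By symmetry it suffices to prove $\cW(V_1,\psi) \subseteq \cW(V_2,\psi)$, so fix $W_1 \in \cW(V_1,\psi)$. The first step is to produce a candidate $W_2 \in \cW(V_2,\psi)$: using that both Kirillov realizations $\cK(V_i,\psi) \subset \Ind_{N_n}^{P_n}\psi$ contain the Schwartz subspace $\cS$ of compactly-supported-modulo-$N_n$ functions (\cite[Lemma 6.3.5]{eh} together with Definition \ref{schwartzdefn}), I can choose $W_2$ so that $H := W_1 - W_2$ restricts on $P_n$ to a compactly-supported-mod-$N_n$ function, while still satisfying $H(ng) = \psi(n) H(g)$ for $n \in N_n$.

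Second, I would apply Theorem \ref{mainthm2} to both pairs $(W_1, W')$ and $(W_2, W')$ with $m = n-1$, in which case $j = 0$ is forced and $q^{n-m-1}/X = 1/X$. Subtracting and using the hypothesis on gamma factors yields, for every co-Whittaker $\cO[G_{n-1}]$-module $V'$ and every $W' \in \cW(V',\psi)$,
$$\Psi(H, W', X)\,\gamma(V_1 \times V', X, \psi) = \omega_{V'}(-1)^{n-1}\,\Psi(w_{n,n-1}\widetilde{H}, \widetilde{W'}, 1/X).$$
The compactness of $H|_{P_n}$ modulo $N_n$ forces the left-hand Rankin--Selberg series to be a Laurent polynomial in $X$; the corresponding analysis of $\widetilde{H}$ under $g \mapsto w_n g^{\iota}$ makes the right-hand side a Laurent polynomial in $1/X$. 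Since $\gamma(V_1 \times V', X, \psi)$ is a unit in $S^{-1}(R[X,X^{-1}])$ by Corollary \ref{gammaisaunit} and is generically not a Laurent polynomial, a degree/pole comparison forces $\Psi(H, W', X) = 0$.

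Third, the coefficient of $X^r$ in $\Psi(H, W', X)$ is the convolution of $g \mapsto H\bigl(\begin{smallmatrix} g & 0 \\ 0 & 1 \end{smallmatrix}\bigr)$ with $W'$ over $N_{n-1} \backslash G_{n-1}^{(r)}$, so the previous step says precisely that the restriction of $H$ to the $G_{n-1}$ block of $G_n$ has vanishing convolution against every Whittaker function of every co-Whittaker $\cO[G_{n-1}]$-module. This is exactly the hypothesis of the key vanishing lemma, which I would invoke to deduce $H \equiv 0$ on $G_{n-1}$. The $N_n$-equivariance of $H$ then upgrades this to vanishing on the full mirabolic $P_n = U_n G_{n-1}$, and finally to vanishing on all of $G_n$ using that the restriction $\cW(V_i,\psi) \to \cK(V_i,\psi)$ is injective for co-Whittaker modules (a fiberwise consequence of the essentially-AIG-dual property of Definition \ref{essentiallyAIGdual}). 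Hence $W_1 = W_2 \in \cW(V_2, \psi)$.

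The main obstacle is the invocation of Theorem \ref{rikka2}: its proof uses the geometry of the integral Bernstein center in an essential way and demands a sufficiently rich supply of twisting $\cO[G_{n-1}]$-modules $V'$. This richness requirement is precisely what forces the finite extension $\cK'/\cK$ into the statement, with the size of $\cK'$ depending only on the Bernstein block containing $V_1$ and $V_2$. A secondary but nontrivial technical point is the construction of $W_2$ in the first step so that $H|_{P_n}$ has compact support modulo $N_n$; in the family setting this requires a careful use of the co-Whittaker structure via the universal property of Proposition \ref{dominance}, together with the support/Laurent polynomial analysis of $\widetilde{H}$ needed in the second step.
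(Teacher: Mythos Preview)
Your argument has two genuine gaps, and it diverges from Henniart's actual strategy despite the citation.

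First, the degree/pole comparison in your second step is invalid. Even granting that both $\Psi(H,W',X)$ and $\Psi(w_{n,n-1}\widetilde H,\widetilde{W'},1/X)$ lie in $R[X,X^{-1}]$, the relation $(\text{Laurent polynomial})\cdot\gamma=(\text{Laurent polynomial})$ with $\gamma$ a unit of $S^{-1}R[X,X^{-1}]$ does not force either side to vanish: writing $\gamma=P/Q$ with $Q\in S$, already $Q\cdot\gamma=P$ exhibits nonzero Laurent polynomials with this property, and nothing prevents $\Psi(H,W',X)$ from absorbing the denominator of $\gamma$. For supercuspidal $V'$ the gamma factor \emph{is} a monomial times a constant, so ``generically not a Laurent polynomial'' is simply false for many of the $V'$ you must range over.

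Second, your final step assumes that $H|_{P_n}=0$ forces $H=0$ on $G_n$, justified by fiberwise injectivity of $\cW(V_i,\psi)\to\cK(V_i,\psi)$. But $H=W_1-W_2$ lies in neither $\cW(V_1,\psi)$ nor $\cW(V_2,\psi)$, so injectivity on each Whittaker model separately (itself not established over $A$) says nothing about $H$. Your first step has a related circularity: for an arbitrary $W_1\in\cW(V_1,\psi)$, finding $W_2$ with $(W_1-W_2)|_{P_n}\in\cInd_N^{P_n}\psi$ amounts to $W_1|_{P_n}\in\cK(V_2,\psi)$, which presupposes what you are trying to prove.

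The paper's route sidesteps all of this. One restricts from the outset to $W_i\in\omega_i(\cS(V_i))$, whose $P_n$-restrictions all land in the common subspace $\cInd_N^{P_n}\psi$ (Lemma~\ref{restrictionschwartzfunctions}); one can then arrange $W_1|_{G_{n-1}}=W_2|_{G_{n-1}}$ \emph{exactly}, so $\Psi(W_1,W',X)=\Psi(W_2,W',X)$ on the nose, and the functional equation gives $\Psi(\widetilde{W_1},\widetilde{W'},X)=\Psi(\widetilde{W_2},\widetilde{W'},X)$ with no pole-chasing. The vanishing lemma is applied to $\widetilde{W_1}-\widetilde{W_2}$, yielding $\widetilde{W_1}|_{G_{n-1}}=\widetilde{W_2}|_{G_{n-1}}$. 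In the notation of the paper, this is the implication $\fS\Rightarrow\widetilde\fS$ under tilde; running the same argument with $V_i^{\iota}$ gives the converse. Since $\fS$ and $\widetilde\fS$ are each $P_n$-stable, and $\rho(g^{\iota})\widetilde W=\widetilde{gW}$, the bijection $\fS\leftrightarrow\widetilde\fS$ makes $\fS$ stable under both $P_n$ and ${}^tP_n$, hence under all of $G_n$. Evaluating at $1$ after translating by an arbitrary $g$ gives $W_1=W_2$. This $G$-stability argument is precisely what replaces your unjustified injectivity claim, and it only delivers $\omega_1(\cS(V_1))=\omega_2(\cS(V_2))$; the full equality of Whittaker models then follows because co-Whittaker modules are $A[G]$-generated by their Schwartz functions.
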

We must suppose that $A$ is a finite-type, reduced, and $\ell$-torsion free in order to implement Theorem \ref{rikka2}. The geometric methods used to prove Theorem \ref{rikka2} require $A$ to be reduced in order that a certain subset of desirable points is open (Lemmas \ref{torsionfree}, \ref{Bexists}); they require $A$ to be finite and flat over $W(k)$ so that this subset defines open subset of $\Spec(\cZ)$.

Because of the control achieved in Theorem \ref{rikka2}, it suffices to take in the statement of Theorem \ref{converse} only those co-Whittaker modules $V'$ such that $V'\otimes_{\cO}\cK'$ is absolutely irreducible, which gives Theorem \ref{intromainthm}.

If $V_1$ and $V_2$ live in the same block of the category $\Rep_{W(k)}(G)$, the argument of Proposition \ref{Wpointsdense} need only be applied to the primitive idempotent $e$ associated to this block, and the finite extension $\cK'$ depends only on this block.

The following Propostion shows the relationship between Whittaker models and supercuspidal support.
\begin{proposition}
\label{whittakerequalssupport}
Suppose $V_1$ and $V_2$ are co-Whittaker $A[G]$-modules. Then $\cW(V_1,\psi)=\cW(V_2,\psi)$ if and only if $f_{V_1}\equiv f_{V_2}$.
\end{proposition}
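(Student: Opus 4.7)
The plan is to prove both implications by leveraging the universal property of $\fW$ given in Proposition \ref{dominance} together with the fact that $V^{(n)}$ is free of rank one for any Whittaker-type $V$.

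For the forward implication, I would first establish the auxiliary claim that any surjection $q : U \twoheadrightarrow U'$ between co-Whittaker $A[G]$-modules produces the equality $\cW(U, \psi) = \cW(U', \psi)$ as subsets of $\Ind_{N}^{G} \psi$. The key point is that right-exactness of $(-)^{(n)}$ forces $q^{(n)} : U^{(n)} \to (U')^{(n)}$ to be a surjection between free $A$-modules of rank one, hence an isomorphism; consequently if $\lambda'$ is a generator of $\Hom_{N}(U', \psi) \cong \Hom_A((U')^{(n)}, A)$, then $\lambda' \circ q$ is a generator of $\Hom_{N}(U, \psi)$, and by Frobenius reciprocity the associated Whittaker map $U \to \Ind_{N}^{G}\psi$ factors through $U'$ with image exactly $\cW(U', \psi)$. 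Assuming now $f_{V_1} = f_{V_2} = f$, set $\fW_f := \fW \otimes_{\cZ, f} A$; Proposition \ref{dominance} supplies surjections $\fW_f \twoheadrightarrow V_i$ for $i=1,2$, so applying the auxiliary claim twice yields $\cW(V_1, \psi) = \cW(\fW_f, \psi) = \cW(V_2, \psi)$.

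For the reverse implication, suppose $\cW := \cW(V_1, \psi) = \cW(V_2, \psi)$. Fix $z \in \cZ$. Since each $V_i$ is co-Whittaker, $z$ acts on $V_i$ as multiplication by the scalar $f_{V_i}(z) \in A$. The canonical quotient $V_i \twoheadrightarrow \cW$ is $G$-equivariant, and hence automatically $\cZ$-equivariant, because the elements of $\cZ$ are by definition natural transformations of the identity functor on $\Rep_{W(k)}(G)$. Therefore $z$ acts on the common space $\cW$ both as $f_{V_1}(z)\cdot \mathrm{id}$ and as $f_{V_2}(z)\cdot \mathrm{id}$, so $(f_{V_1}(z) - f_{V_2}(z)) \cdot \cW = 0$. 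Applying $(-)^{(n)}$ to the surjection $V_i \twoheadrightarrow \cW$ gives a surjection from $V_i^{(n)} \cong A$ onto $\cW^{(n)}$; moreover, the factorization of the Whittaker map $V_i \to \Ind_N^G \psi$ through $\cW$ shows that the evaluation-at-$1$ map $\cW^{(n)} \to A$ is also surjective, forcing $\cW^{(n)} \cong A$ as a faithful $A$-module. Hence $f_{V_1}(z) = f_{V_2}(z)$ for every $z \in \cZ$.

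The main subtlety, more a matter of care than a serious obstacle, lies in the forward direction: one must verify that a surjection of co-Whittaker modules induces equality (not merely inclusion or equality up to some scalar issue) of Whittaker spaces as subsets of $\Ind_N^G \psi$. This rests entirely on the rank-one freeness of the $(n)$-derivative for Whittaker-type modules, which upgrades the automatic surjection $q^{(n)}$ to an isomorphism and hence makes Frobenius reciprocity send a generator to a generator.
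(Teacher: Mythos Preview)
Your proof is correct and follows essentially the same approach as the paper's. For the direction $f_{V_1}=f_{V_2}\Rightarrow \cW(V_1,\psi)=\cW(V_2,\psi)$ (which you call ``forward'' and the paper calls ``converse''), both arguments apply Proposition~\ref{dominance} to realize each $V_i$ as a quotient of $\fW\otimes_{\cZ,f}A$; the auxiliary claim you prove directly---that a surjection of co-Whittaker modules induces equality of Whittaker spaces---is precisely the content of the cited \cite[Lemma~2.6]{moss1}. For the other direction, the paper packages the naturality argument into Lemma~\ref{dominanceclassequalssupport} (a surjection of co-Whittaker modules preserves $f$) and applies it to $V_i\twoheadrightarrow\cW(V_i,\psi)$, whereas you run the same naturality argument in place and use faithfulness of $\cW^{(n)}\cong A$ directly; this is a mild streamlining, since you avoid separately verifying that $\cW$ is co-Whittaker.
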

\begin{proof}
It follows from Lemma \ref{dominanceclassequalssupport} below that $f_{V_1} = f_{\cW(V_1,\psi)} = f_{\cW(V_2,\psi)} = f_{V_2}$. For the converse, note that by \cite[Lemma 2.6]{moss1} we have $\cW(V_1,\psi)=\cW(\fW\otimes_{\cZ,f_{V_1}}A,\psi)=\cW(\fW\otimes_{\cZ,f_{V_2}}A,\psi)=\cW(V,\psi)$.
\end{proof}

\begin{lemma}
\label{dominanceclassequalssupport}
Suppose we have two co-Whittaker modules $V_1$ and $V_2$ such that $V_2$ is a quotient of $V_1$. Then $f_{V_1}\equiv f_{V_2}$. 
\end{lemma}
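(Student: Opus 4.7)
Let $\pi\colon V_1 \twoheadrightarrow V_2$ denote the given $A[G]$-equivariant surjection. The plan is to deduce the equality of the supercuspidal supports $f_{V_1}, f_{V_2}\colon \cZ \to A$ directly from the naturality of elements of the Bernstein center. I will fix $z \in \cZ$ and use that $z$ is, by definition, an endomorphism of the identity functor on $\Rep_{W(k)}(G)$; applied to $\pi$, this naturality yields the commutation relation $\pi \circ z_{V_1} = z_{V_2} \circ \pi$.

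Next, since both $V_1$ and $V_2$ are co-Whittaker, Schur's lemma applies and $z$ acts on $V_i$ as multiplication by the scalar $f_{V_i}(z) \in A$ — this is literally how $f_{V_i}$ was defined in the paragraph preceding the lemma. Substituting into the commutation relation gives $f_{V_1}(z)\,\pi(v) = f_{V_2}(z)\,\pi(v)$ for every $v \in V_1$. Surjectivity of $\pi$ then forces the element $f_{V_1}(z) - f_{V_2}(z) \in A$ to act by zero on all of $V_2$.

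The final step will be to invoke the Schur isomorphism $A \isomto \End_{A[G]}(V_2)$ in the reverse direction: its injectivity forces $f_{V_1}(z) = f_{V_2}(z)$ in $A$. Since $z \in \cZ$ was arbitrary, this gives $f_{V_1} \equiv f_{V_2}$. There is no substantive obstacle in this argument; it merely makes explicit the sense in which supercuspidal support descends along $A[G]$-equivariant quotients, using only naturality in the Bernstein center and Schur's lemma for co-Whittaker modules.
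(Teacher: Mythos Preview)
Your proposal is correct and follows essentially the same route as the paper: both arguments combine the naturality of $z\in\cZ$ with respect to the quotient map and Schur's lemma for co-Whittaker modules. The only cosmetic difference is that the paper tracks a single cyclic $A[G]$-generator $v_1\in V_1$ (and its image $\phi(v_1)$ in $V_2$) to read off the scalar, whereas you apply the injectivity of $A\isomto\End_{A[G]}(V_2)$ directly to the element $f_{V_1}(z)-f_{V_2}(z)$; these are equivalent, and your formulation is arguably cleaner.
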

\begin{proof}
Suppose $\phi:V_1\rightarrow V_2$ is any surjective $G$-equivariant map. Choosing a cyclic $A[G]$-generator $v_1\in V_1$, then $\phi(v_1)$ is an $A[G]$-generator of $V_2$ since its image in $V_2^{(n)}$ is a generator \cite[Lemma 2.29]{moss1}. Denote by $v_1'$ the image of $v_1$ in $V_1\rightarrow V_1^{(n)}$. We have $v_1'$ generates $V_1^{(n)}$ and $\phi^{(n)}(v_1')$ generates $V_2^{(n)}$.

If $z$ is an element of $\cZ$, then $z_{V_1}\in \End_G(V_1)$ sends $v_1$ to $f_{V_1}(z)v_1$, where $f_{V_1}(z)\in A$. By definition, the action of the Bernstein center is functorial, hence commutes with the morphism $\phi$, thus $$z_{V_2}(\phi(v_1)) = \phi(f_{V_1}(z)v_1) = f_{V_1}(z)\phi(v_1).$$ Since $\phi(v_1)$ is an $A[G]$-generator of $V_2$, $z_{V_2}$ is completely determined by where it sends $\phi(v_1)$. This shows that the map $f_{V_2}:\cZ\rightarrow \End_G(V_2)\rightarrow A$ given by $z\mapsto z_{V_2}\mapsto f_{V_2}(z)$ exactly equals the map $f_{V_1}$.
\end{proof}

%

We remark that any nonzero $G$-equivariant homomorphism between co-Whittaker modules is a surjection.

\subsection{Proof of converse theorem}

For two $W(k)$-algebras $A$, $B$, $\phi_1\in\cInd_N^G\psi_A$ and $\phi_2\in\Ind_N^G\psi_B^{-1}$ we denote by $\langle \phi_1,\phi_2\rangle$ the element $$\int_{N\backslash G}\phi_1(x)\otimes\phi_2(x)dx\ \in A\otimes_{W(k)}B$$ and let $\cK=\Frac{W(k)}$. At the heart of the proof of the converse theorem will lie the following result, whose proof is postponed until in \S \ref{proofofvanishingthm}

\begin{theorem}
\label{rikka2}
Suppose $A$ is a finite-type, reduced, $\ell$-torsion free $W(k)$-algebra. Suppose $H\neq 0$ is an element of $\cInd\psi_A$. Then there exists a finite extension $\cK'$ of $\cK$ with ring of integers $\cO$ and an absolutely irreducible generic integral $\cK'$ representation $U'$ with integral structure $U$, such that there is a Whittaker function $W\in \cW(U^{\vee},\psi_{\cO}^{-1})$ satisfying $\langle H,W\rangle \neq 0$ in $A\otimes_{W(k)}\cO$.
\end{theorem}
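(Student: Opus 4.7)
\textbf{The plan is} to use the action of the integral Bernstein center $\cZ$ on $\fW := \cInd_N^G \psi$, viewing $H \in \fW \otimes_{W(k)} A$ as a section of the resulting quasi-coherent module on $\Spec(\cZ \otimes_{W(k)} A)$. Decompose $H = \sum_e eH$ along primitive idempotents $e \in \cZ$; only finitely many summands are nonzero (by smoothness of $H$), and at least one is nonzero. A Whittaker function $W \in \cW(U^\vee, \psi^{-1})$ attached to a co-Whittaker module $U$ living in a single block pairs against $H$ through only the corresponding component of $H$, so I may fix an $e$ with $eH \neq 0$ and search for $U$ within the appropriate block. By \cite[Thm 10.8]{h_bern}, $e\cZ$ is a reduced, $\ell$-torsion free, finite-type $W(k)$-algebra, and by Lemma \ref{tensorproductreducedflat}, $R := e\cZ \otimes_{W(k)} A$ inherits the same properties.

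\textbf{The geometric heart of the argument} is to locate a closed point $y$ of the nonempty support $Z := V(\mathrm{Ann}_R(eH)) \subseteq \Spec R$ whose image $\pi(y)$ under the projection $\pi : \Spec R \to \Spec e\cZ$ lies in a ``desirable'' locus $D \subseteq \Spec e\cZ$: the set of supercuspidal supports $f : e\cZ \to \cO$, with $\cO = \cO_{\cK'}$ the ring of integers in a finite extension $\cK'/\cK$, for which $U' := \fW \otimes_{e\cZ, f} \cK'$ is absolutely irreducible and generic. This requires two auxiliary facts (to appear as Lemmas \ref{torsionfree} and \ref{Bexists}): (i) $D$ is open in $\Spec e\cZ$, using the reducedness of $e\cZ$ and the moduli-theoretic interpretation of $\Spec e\cZ$ over the generic fiber; and (ii) $\pi$ is open and $\pi(Z) \cap D \neq \emptyset$, using finite-flatness of $e\cZ$ over a suitable polynomial subring of $W(k)$ (so that $\pi$ is open), together with density of $D$ in the characteristic-zero fiber of $\Spec e\cZ$.

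\textbf{Having found} such a $y$ with associated $f : e\cZ \to \cO$, set $U := \fW \otimes_{e\cZ, f} \cO$, which is an integral structure on $U'$. The nonvanishing of $eH$ at $y$ provides a nonzero image of $H$ in the fiber $\fW \otimes_{e\cZ, f}(A \otimes_{W(k)} \cO)$. Via the canonical identification of this fiber with (a quotient involving) the Whittaker space of $U$, and the duality between $\cInd_N^G \psi$ and $\Ind_N^G \psi^{-1}$ under $\int_{N \backslash G}$, this nonvanishing translates into the existence of a Whittaker function $W \in \cW(U^\vee, \psi_\cO^{-1})$ with $\langle H, W \rangle \neq 0$ in $A \otimes_{W(k)} \cO$, as required.

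\textbf{The main obstacle} is the geometric step: both the openness of $D$ (which requires controlling how ``absolutely irreducible generic'' deforms in families over characteristic-zero fibers) and the openness of $\pi$, needed to guarantee $\pi(Z) \cap D \neq \emptyset$. The hypotheses on $A$ (finite type, reduced, $\ell$-torsion free) and the structural results on $e\cZ$ from \cite{h_bern, h_whitt} are each essential for establishing these openness claims, and this is also where any generalization beyond these hypotheses (as remarked in the introduction regarding the $\ell$-modular setting) appears to break down.
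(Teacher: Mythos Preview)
Your overall strategy matches the paper's: reduce to one block via a primitive idempotent $e$, work over $R=e\cZ\otimes_{W(k)}A$, locate a point where $eH$ survives and where the corresponding co-Whittaker fiber is absolutely irreducible, then produce the Whittaker function by duality between $\cInd_N^G\psi$ and $\Ind_N^G\psi^{-1}$. That is exactly the architecture of \S\ref{proofofvanishingthm}.

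There is, however, a genuine slip in how you set up the geometric step. You take $Z=V(\mathrm{Ann}_R(eH))$ and then assert that ``nonvanishing of $eH$ at $y$'' (i.e.\ $y\in Z$) yields a nonzero image in the fiber $\fM\otimes\kappa(y)$. This is false in general: being in the support means $eH\neq 0$ in the localization $\fM_y$, which does \emph{not} imply $eH\notin\fm_y\fM_y$. (Take $R=k[t]$, $\fM=R^2$, $eH=(t,0)$: the support is everything, but $eH$ dies in the fiber at $(t)$.) What is actually needed is the locus $\{\fq:eH\notin\fq\fM\}$, and this is precisely what Lemmas~\ref{torsionfree} and~\ref{Bexists} in the paper establish---but not in the way you describe. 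The paper first cuts down to $e'=e*e_K*e$ so that $\fM:=e'(\cInd\psi)\otimes A$ is finitely generated over $R$ (a step your outline omits), then uses reducedness and $\ell$-torsion-freeness to show $\fM$ is torsion-free and hence embeds in a free $R$-module; from this embedding, $\{\fq:eH\in\fq\fM\}$ is seen to lie in a proper closed set not containing the generic fiber. So Lemmas~\ref{torsionfree}/\ref{Bexists} are about the nonvanishing-in-fiber locus of $eH$ inside $\Spec R$, not about openness of a ``desirable'' locus $D$ inside $\Spec e\cZ$.

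Correspondingly, the logic runs in the opposite direction from your sketch. The paper's open set lives upstairs in $\Spec R$ (the fiber-nonvanishing locus of $eH$), gets pushed down to an open set in $\Spec e\cZ$ via the open map $\phi$ (here finite-type flatness of $A/W(k)$ is used), and is then intersected with the irreducibility locus (Proposition~\ref{univcowhittirreducibleatminimalprimes}) and the dense set of $\cO'$-valued points (Proposition~\ref{Wpointsdense}). Your version, pushing the closed set $Z$ down and hoping $\pi(Z)\cap D\neq\emptyset$, would require a separate argument that $Z$ meets the generic fiber---which again comes back to torsion-freeness. Once you replace $Z$ by the correct open nonvanishing locus and reassign the auxiliary results accordingly, your outline becomes the paper's proof.
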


The rest of this section is devoted to proving Theorem \ref{converse}, while assuming Theorem \ref{rikka2}. 

Let $m=n-1$ and let $V_1$ and $V_2$ be co-Whittaker $A[G_n]$-modules. For $i=1,2$, choose $G$-homomorphisms $\omega_i:V_i\rightarrow \Ind_N^G\psi$ generating $\Hom_G(V_i,\Ind_N^G\psi)$. Let $\cS(V_i)$ denote the sub-$A[P_n]$-module of $V_i$ given by the Schwartz functions of $V_i$ (Definition \ref{schwartzdefn}). 

\begin{lemma}
\label{restrictionschwartzfunctions}
Consider the sub-$A[P_n]$-modules $\omega_i(\cS(V_i))$ of $\Ind_N^G\psi$. If $r_P:\Ind_N^G\psi\rightarrow \Ind_N^P\psi$ denotes the map given by restriction of functions, then $r_P(\omega_1(\cS(V_1))) = r_P(\omega_2(\cS(V_2)))$.
\end{lemma}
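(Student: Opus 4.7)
The plan is to show that each image $r_P(\omega_i(\cS(V_i)))$ depends only on the $A$-module $V_i^{(n)}$, which is free of rank one for both $i=1,2$ since each $V_i$ is co-Whittaker; hence the two images coincide as sub-$P_n$-modules of $\Ind_{N}^{P_n}\psi_A$.

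Because $V_i$ is co-Whittaker, it is in particular of Whittaker type, so $V_i^{(n)} = (V_i)_{N,\psi}$ is free of rank one over $A$, and $\omega_i$ is determined by a generator $\lambda_i \in \Hom_N(V_i,\psi)$. Dually, $\lambda_i$ picks out a generator of $V_i^{(n)}$, which via an iteration of the definition of $\Phi^+$ gives a canonical $P_n$-isomorphism
\[
\cS(V_i) \;=\; (\Phi^+)^{n-1}V_i^{(n)} \;\cong\; \mathcal{E}_A \;:=\; (\Phi^+)^{n-1}A,
\]
with the further property that $\mathcal{E}_A^{(n)}\cong A$ canonically. I would then analyze the $P_n$-equivariant composite
\[
\varphi_i\colon\;\cS(V_i)\;\hookrightarrow\;V_i|_{P_n}\;\xrightarrow{\,r_P\circ \omega_i\,}\;\Ind_{N}^{P_n}\psi_A,\qquad v\longmapsto\bigl(p\mapsto \lambda_i(pv)\bigr).
\]
By Frobenius reciprocity, $\Hom_{P_n}(\mathcal{E}_A,\Ind_{N}^{P_n}\psi_A)\cong \Hom_A(\mathcal{E}_A^{(n)},A)\cong A$ is free of rank one, so $\varphi_i$ differs from a fixed canonical map $\iota\colon \mathcal{E}_A\to\Ind_{N}^{P_n}\psi_A$ by a scalar $c_i\in A$. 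Granting that $c_i\in A^\times$, one concludes
\[
r_P(\omega_i(\cS(V_i)))\;=\;\varphi_i(\cS(V_i))\;=\;c_i\,\iota(\mathcal{E}_A)\;=\;\iota(\mathcal{E}_A),
\]
a submodule of $\Ind_{N}^{P_n}\psi_A$ manifestly independent of $i$.

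The main obstacle I anticipate is the unit assertion $c_i\in A^\times$. This reduces to tracking the effect of $\varphi_i$ on the functor $(-)^{(n)}$: the Bernstein--Zelevinsky embedding $\cS(V_i)\hookrightarrow V_i|_{P_n}$ induces an isomorphism $\cS(V_i)^{(n)}\cong V_i^{(n)}$, and $\lambda_i$ realizes the chosen generator of $V_i^{(n)}$ by construction; unwinding the iterated $\Phi^+$-adjunctions then shows that $\varphi_i^{(n)}$ coincides with the identity on $A$, forcing $c_i = 1$.
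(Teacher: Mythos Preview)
Your argument is correct, and it lands on the same fact the paper uses: both images coincide with $\cInd_{N}^{P_n}\psi_A$ inside $\Ind_{N}^{P_n}\psi_A$ (in your notation, $\iota(\mathcal{E}_A)$, since $(\Phi^+)^{n-1}A\cong\cInd_{N}^{P_n}\psi$). The paper's proof, however, is a one-line citation: it observes that $r_P\circ\omega_i$ is the canonical $P_n$-Whittaker map $\omega_{i,P}$ and then invokes \cite[Prop.~2.8(2)]{moss1}, which already asserts $\omega_{i,P}(\cS(V_i))=\cInd_{N}^{P_n}\psi$. What you do instead is essentially reprove that proposition from scratch via Bernstein--Zelevinsky adjunctions: the identification $\cS(V_i)\cong(\Phi^+)^{n-1}A$, the rank-one computation $\Hom_{P_n}(\mathcal{E}_A,\Ind_N^{P_n}\psi_A)\cong A$, and the unit check for $c_i$. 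Your route is more self-contained and makes transparent \emph{why} the image is independent of $V_i$ (it only sees $V_i^{(n)}$), at the cost of unwinding the adjunctions carefully; the paper's route is shorter but relies on the reader accepting the cited black box. Both yield the same submodule, so there is no mathematical daylight between them.
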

\begin{proof}
Let $\omega_{i,P}$ be the maps $V_i|_P\rightarrow \Ind_N^P\psi$ guaranteed by genericity. Then we have $r_P\circ \omega_i = \omega_{i,P}$ from the definitions. By \cite[Proposition 2.8 (2)]{moss1}, we have $\omega_{1,P}(\cS(V_1))=\omega_{2,P}(\cS(V_2)) = \cInd_N^P\psi$ as subsets of $\Ind_N^P\psi$. This proves the claim.
\end{proof}

\begin{proposition}
\label{whittakerfunctionsagreeonG}
Suppose $A$ is reduced, flat, and finite type over $W(k)$, and suppose the gamma factors are equal, as in Theorem \ref{converse}. Take $W_1\in \omega_1(\cS(V_1))$ and $W_2\in \omega_2(\cS(V_2))$ such that $r_P(W_1) = r_P(W_2)$, then $W_1= W_2$ as elements of $\Ind_N^G\psi$.
\end{proposition}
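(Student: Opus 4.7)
The plan is to show $H := W_1 - W_2 \in \Ind_N^G\psi$ vanishes on all of $G_n$, starting from the hypothesis $H|_{P_n} = 0$. The key observation is that when $m = n-1$ and $j = 0$, the zeta integral $\Psi(W, W', X)$ involves $W$ only through its values on $\text{diag}(G_{n-1}, 1) \subset P_n$, and so depends on $W$ only through $r_P(W)$. Consequently, $r_P(W_1) = r_P(W_2)$ yields $\Psi(W_1, W', X) = \Psi(W_2, W', X)$ for every $W' \in \cW(V', \psi)$ with $V'$ any co-Whittaker $\cO[G_{n-1}]$-module. Invoking Theorem \ref{mainthm2} together with the gamma-factor equality hypothesis, we conclude that the formal series $\Psi(F, \widetilde{W'}, X^{-1})$ is identically zero, where $F := w_{n,n-1}\widetilde{H}$.

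Extracting coefficients, this vanishing reads $\int_{N_{n-1}\backslash G_{n-1}^{(r)}} F(\text{diag}(g, 1)) \widetilde{W'}(g)\, dg = 0$ for every $r \in \ZZ$. Since $V' \mapsto (V')^\iota$ is a bijection on co-Whittaker $\cO[G_{n-1}]$-modules (cf.\ Corollary \ref{gammaisaunit}), $\widetilde{W'}$ ranges over all Whittaker functions (with respect to $\psi^{-1}$) of this entire class; letting $V'$ range further over unramified twists packages the $r$-stratified pairings into a single equality $\int_{N_{n-1}\backslash G_{n-1}} F(\text{diag}(g, 1)) W''(g)\, dg = 0$ for all such $W''$. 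Applying the $\psi^{-1}$-version of Theorem \ref{rikka2} in the group $G_{n-1}$ now forces $g \mapsto F(\text{diag}(g, 1))$ to vanish on $G_{n-1}$, and the Whittaker transformation on the right propagates this to $F|_{P_n} = 0$. Unwinding the definition $F = w_{n,n-1}\widetilde{H}$, this means $H$ vanishes on an explicit subset $\Sigma \subset G_n$ obtained from $P_n$ via the Weyl element $w_{n,n-1}$ and the involution $\iota$. Combining $H|_{P_n} = 0$ with $H|_\Sigma = 0$, and using that $V_1$ and $V_2$ share a common central character (forced by the gamma-factor hypothesis applied to twists of the trivial $G_{n-1}$-representation), the $Z_n$-saturation of $P_n \cup \Sigma$ exhausts $G_n$, giving $H \equiv 0$.

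The principal obstacle is verifying the compact-support hypothesis needed to invoke Theorem \ref{rikka2}: the restriction $g \mapsto F(\text{diag}(g, 1))$ must lie in $\cInd_{N_{n-1}}^{G_{n-1}} \psi_A^{-1}$. The Schwartz assumption on $W_1, W_2$ guarantees that each $W_i|_{P_n}$ is compactly supported modulo $N_n$, and via the identification $P_n/N_n \cong \text{diag}(G_{n-1}, 1)/N_{n-1}$ this gives compact support modulo $N_{n-1}$ on the $P_n$-portion of $H$; the contribution of the Weyl element $w_{n,n-1}$ and the involution $\iota$ appearing in $F = w_{n,n-1}\widetilde{H}$, together with the asymptotic vanishing provided by Lemma \ref{finitelymanynegativeterms}, yields the needed support bound.
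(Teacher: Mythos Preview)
Your approach is on the right track through the application of Theorem \ref{rikka2}, but the final step contains a genuine gap. After one application of the functional equation and the vanishing lemma, you obtain $\widetilde{H}|_{P_n}=0$, which translates to $H$ vanishing on $\Sigma := w_n\,{}^tP_n\,w_{n,n-1}$. You then assert that the $N_nZ_n$-saturation of $P_n\cup\Sigma$ exhausts $G_n$. This is false already for $n=2$: there $w_{2,1}=I_2$, so $\Sigma = w_2\,{}^tP_2$ consists of matrices with vanishing $(2,2)$-entry, while $Z_2P_2$ is the Borel $B$; the matrix $\left(\begin{smallmatrix}1&0\\1&1\end{smallmatrix}\right)$ lies in neither $N_2Z_2\Sigma$ nor $B$. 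More generally, a single pass through the functional equation cannot produce enough vanishing, and your appeal to a common central character does not repair this.

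The paper's argument avoids this difficulty by working not with the vanishing locus of $H$ but with the subspace $\fS\subset \cW(V_1,\psi)\times\cW(V_2,\psi)$ of pairs agreeing on $G_{n-1}$. The functional equation plus Theorem \ref{rikka2} shows $(W_1,W_2)\in\fS\Rightarrow(\widetilde{W_1},\widetilde{W_2})\in\widetilde{\fS}$; the crucial additional input (Lemma \ref{tildeonspaceoffunctionsagreeingonP}) is the \emph{converse} implication, obtained by rerunning the same argument with $V_i$ replaced by $V_i^\iota$ and using Corollary \ref{gammaisaunit}. Both $\fS$ and $\widetilde{\fS}$ are easily seen to be $P_n$-stable, and since $\rho(g^\iota)\widetilde{W}=\widetilde{gW}$, the two-way correspondence $\fS\leftrightarrow\widetilde{\fS}$ forces $\fS$ to be ${}^tP_n$-stable as well. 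As $P_n$ and ${}^tP_n$ generate $G_n$, one concludes $\fS$ is $G_n$-stable, whence $W_1(g)=W_2(g)$ for all $g$. Your argument captures only the forward half of this bidirectional passage; incorporating the reverse direction and the resulting stability under both mirabolics is what closes the gap.
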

\begin{proof}
The proof follows \cite{hen_converse}. The assumptions on $A$ will allow us to invoke Theorem \ref{rikka2}.

Let $\fS$ be the subspace of $\cW(V_1,\psi)\times \cW(V_2,\psi)$ consisting of pairs $(W_1,W_2)$ such that $r_{G_m}(W_1)=r_{G_m}(W_2)$, where $r_{G_m}$ denotes restriction to the subgroup $G_m$ of $G_n$ (with $m=n-1$). Since $G_m\subset P_n$, Lemma \ref{restrictionschwartzfunctions} shows this is nonempty. Let $(W_1,W_2)\in \fS$. Then
$\Psi(W_1,W',X) = \Psi(W_2,W',X)$ for all $W'\in \cW(V',\psi_{\cO}^{-1})$ as $V'$ varies over all co-Whittaker $\cO[G_{n-1}]$-modules. By assumption, $\gamma(V_1\times V', X, \psi) = \gamma(V_2\times V',X,\psi)$ for all such $V'$, hence the equality of the products:
$\Psi(W_1,W',X)\gamma(V_1\times V', X, \psi) = \Psi(W_2,W',X)\gamma(V_2\times V',X,\psi)$. Applying the functional equation with $j=0$ and $m= n-1$ we thus conclude that
$\Psi(\widetilde{W_1},\widetilde{W'},\frac{q^{n-m-1}}{X})=\Psi(\widetilde{W_2},\widetilde{W'},\frac{q^{n-m-1}}{X})$, and furthermore $\Psi(\widetilde{W_1},\widetilde{W'},X)=\Psi(\widetilde{W_2},\widetilde{W'},X)$.

For each integer $m$, denote by $H_m$ the function on $G_m$ given by
\begin{eqnarray*}
H_m(g) = 0 & \text{if} & v_F(\det g)\neq m\\
H_m(g) = \widetilde{W_1}\left(\begin{smallmatrix}g & 0 \\ 0 & 1 \end{smallmatrix}\right) - \widetilde{W_2}\left(\begin{smallmatrix}g & 0 \\ 0 & 1 \end{smallmatrix}\right) & \text{if} & v_F(\det g)=m.
\end{eqnarray*} Then the equality of formal Laurent series $\Psi(\widetilde{W_1},\widetilde{W'},X)=\Psi(\widetilde{W_2},\widetilde{W'},X)$ implies that, for each $m$, we have $$\int_{N_m\backslash G_m}H_m(g)\otimes\widetilde{W'}(g)dg = 0$$ for all $W'$ in the Whittaker spaces $\cW(V',\psi_{\cO})$ of all co-Whittaker $\cO[G]$-modules $V'$.

Now suppose $V'$ has the property that $V'\rightarrow V'\otimes_{\cO}\cK'$ is an embedding and $V'\otimes\cK'$ is absolutely irreducible. Then $(V')^{\vee}\otimes\cK'\cong(V'\otimes\cK')^{\vee}$, and by \cite[Thm 7.3]{b-z}, $(V'\otimes\cK')^{\vee}\cong (V'\otimes\cK')^{\iota}$. Thus $$\cW((V'\otimes\cK')^{\vee},\psi_{\cK'}^{-1})=\cW((V'\otimes\cK')^{\iota},\psi_{\cK'}^{-1})=\cW((V')^{\iota},\psi_{\cO}^{-1})\otimes \cK'.$$ So given $W^{\vee}\in \cW((V')^{\vee},\psi_{\cO}^{-1})$, we can view $W^{\vee}$ as an element of $\cW((V')^{\iota},\psi_{\cO}^{-1})[\frac{1}{\varpi}]$ where $\varpi$ is a uniformizer of $\cO$. In other words, there is an integer $s$ such that $\varpi^sW^{\vee}$ is given by an element $\widetilde{W}$ in $\cW((V')^{\iota},\psi_{\cO}^{-1})$. Therefore
$$\varpi^s\langle H_m,W^{\vee}\rangle = \langle H_m,\varpi^sW^{\vee}\rangle=\langle H_m,\widetilde{W}\rangle = 0,$$ which implies $\langle H_m,W^{\vee}\rangle=0$ since $A\otimes_{W(k)}\cO$ is flat over $\cO$ (i.e. $\varpi$-torsion free). Therefore we can apply the contrapositive of Theorem \ref{rikka2} to conclude that each $H_m$ is identically zero, for all $m$. Hence $\widetilde{W_1}\left(\begin{smallmatrix}g & 0 \\ 0 & 1 \end{smallmatrix}\right)\equiv \widetilde{W_2}\left(\begin{smallmatrix}g & 0 \\ 0 & 1 \end{smallmatrix}\right)$.

Let $\widetilde{\fS}$ be the subspace of $\cW(V_1^{\iota},\psi^{-1})\times\cW(V_2^{\iota},\psi^{-1})$ consisting of pairs $(U_1,U_2)$ whose restrictions to $G_m\subset G_n$ are equal. Then we have shown that $(\widetilde{W_1},\widetilde{W_2})\in \widetilde{\fS}$. In fact, the following result is true:
\begin{lemma}
\label{tildeonspaceoffunctionsagreeingonP}
Let $W_1$ be in $\cW(V_1,\psi)$ and $W_2$ be in $\cW(V_2,\psi)$. Then $(W_1,W_2)$ is in $\fS$ if and only if $(\widetilde{W_1},\widetilde{W_2})$ is in $\widetilde{\fS}$.
\end{lemma}
\begin{proof}[Proof of Lemma \ref{tildeonspaceoffunctionsagreeingonP}]
We have just proved one direction. By Lemma \ref{gammaisaunit}, our hypothesis on the equality of gamma factors is equivalent to the equality of the gamma factors $\gamma(V_1^{\iota}\times(V')^{\iota},X,\psi^{-1})=\gamma(V_2^{\iota}\times(V')^{\iota},X,\psi^{-1})$ for all $(V')^{\iota}$. Since $(-)^{\iota}$ is an exact covariant functor which is additive in direct sums, commutes with base-change, and induces an isomorphism between Whittaker spaces, $V\mapsto V^{\iota}$ preserves the property of being co-Whittaker and $V^{\iota}$, $(V')^{\iota}$ are again co-Whittaker. Thus the entire argument works replacing $V_i$ with $V_i^{\iota}$ and $V'$ with $(V')^{\iota}$ to get the converse implication.
\end{proof}

We now continue with the proof of Proposition \ref{whittakerfunctionsagreeonG}. If we let $G_n$ act diagonally on $\cW(V_1,\psi)\times \cW(V_2,\psi)$ and on $\cW(V_2^{\iota},\psi^{-1})\times \cW(V_2^{\iota},\psi^{-1})$, both by right translation, then the subgroup $P_n$ stabilizes the subspaces $\fS$ and $\widetilde{\fS}$. To see this note that for $g\in G_m$ and $u\in U_n$ we have $W_i(gu) = W_i(gug^{-1}g) = \psi^g(u)W_i(g)$, so $uW_i$'s restriction to $G_m$ is completely determined. If $\rho$ denotes right translation, a short calculation shows $\rho(g^{\iota})\widetilde{W}(x) = \widetilde{gW}(x)$. Combining this with the lemma above, it follows that $\fS$ is stable under $^tP$ as well. Hence $\fS$ is stable under the group generated by $P$ and $^tP$. But this group contains all elementary matrices, hence contains all of $SL_n(F)$. On the other hand, this group also contains matrices of any determinant. Hence for any $a\in F^{\times}$ it contains all matrices in $GL_n(F)$ with determinant $a$; in other words this group equals $G$.

Therefore $\fS$ is stable under the action of all of $G_n$. Given $W_1$ and $W_2$ such that $r_P(W_1)=r_P(W_2)$ we have that $r_P(gW_1) = r_P(gW_2)$ for any $g\in G_n$ so we have $gW_1(1)=gW_2(1)$, i.e. $W_1(g)=W_2(g)$ for all $g\in G_n$. This concludes the proof of Proposition \ref{whittakerfunctionsagreeonG}.
\end{proof}

\begin{corollary}
\label{equalschwartzfunctions}
If $A$, $V_1$, and $V_2$ satisfy the hypotheses of Theorem \ref{converse}, then $\omega_1(\cS(V_1)) = \omega_2(\cS(V_2))$.
\end{corollary}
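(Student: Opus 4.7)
The plan is to combine Lemma \ref{restrictionschwartzfunctions} and Proposition \ref{whittakerfunctionsagreeonG} directly; no further Bernstein-center or integration work should be needed, since both ingredients already package up the hard content.

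Concretely, I would argue as follows. Pick any $W_1 \in \omega_1(\cS(V_1))$. By Lemma \ref{restrictionschwartzfunctions}, the two images $r_P(\omega_1(\cS(V_1)))$ and $r_P(\omega_2(\cS(V_2)))$ coincide inside $\Ind_N^P\psi$, so there exists $W_2 \in \omega_2(\cS(V_2))$ with $r_P(W_1) = r_P(W_2)$. Proposition \ref{whittakerfunctionsagreeonG} now applies (this is precisely its hypothesis) and forces $W_1 = W_2$ as elements of $\Ind_N^G\psi$. Hence $W_1 \in \omega_2(\cS(V_2))$, which gives $\omega_1(\cS(V_1)) \subseteq \omega_2(\cS(V_2))$.

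For the reverse inclusion I would simply observe that the entire setup is symmetric in the indices $1,2$: the hypothesis $\gamma(V_1\times V',X,\psi) = \gamma(V_2\times V',X,\psi)$ for all co-Whittaker $\cO[G_{n-1}]$-modules $V'$ is manifestly symmetric, as are the hypotheses of Lemma \ref{restrictionschwartzfunctions} and Proposition \ref{whittakerfunctionsagreeonG}. Running the same argument with the roles of $V_1$ and $V_2$ interchanged then yields $\omega_2(\cS(V_2)) \subseteq \omega_1(\cS(V_1))$, and together the two inclusions give the asserted equality.

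Since all of the substantive analytic and geometric input (the comparison on $P_n$, and the passage from agreement on $P_n$ to agreement on $G_n$ via the vanishing theorem and the $\iota$-trick) has been absorbed into the two preceding results, I do not expect any genuine obstacle here; the corollary is essentially a bookkeeping statement. The only thing to be slightly careful about is to note explicitly that Proposition \ref{whittakerfunctionsagreeonG} applies in both directions, but this is immediate from the symmetry of its hypotheses under swapping $V_1$ and $V_2$.
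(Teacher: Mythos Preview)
Your proposal is correct and follows essentially the same approach as the paper: pick $W_1$ on one side, use Lemma \ref{restrictionschwartzfunctions} to find a matching $W_2$ on the other side with the same restriction to $P_n$, invoke Proposition \ref{whittakerfunctionsagreeonG} to conclude $W_1=W_2$, and then appeal to symmetry for the reverse inclusion. The paper's proof is terser but structurally identical.
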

\begin{proof}
Given $W_1$ in the left side, there exists $W_2$ such that $r_P(W_1)=r_P(W_2)$. Proposition \ref{whittakerfunctionsagreeonG} then implies $W_1=W_2\in \omega_2(\cS(V_2))$ which shows one containment. The argument to show the opposite containment is identical.
\end{proof}

We can now deduce Theorem \ref{converse}. Suppose $A$, $V_1$, $V_2$ satisfy the hypotheses of Theorem \ref{converse}. Since $V_i$ is co-Whittaker and surjects onto $\cW(V_i,\psi)$, we have that $\cW(V_i,\psi)$ is also co-Whittaker. Moreover, $\cS(\cW(V_i,\psi))=\omega_i\cS(V_i)$ as $A[P]$-modules. By Corollary \ref{equalschwartzfunctions}, $\omega_1(\cS(V_1)) = \omega_2(\cS(V_2))$ and hence live in $\cW(V_1,\psi)\cap \cW(V_2,\psi)$, the intersection taken within $\Ind_N^G\psi$. Because the $\cW(V_i,\psi)$ are co-Whittaker, it  follows from \cite[Lemma 6.3.2]{eh} that the $A$-submodule $\cS(\cW(V_i,\psi))=\omega_i\cS(V_i)$ generates $\cW(V_i,\psi)$ as an $A[G]$-module.  Hence there is a subset of $\cW(V_1,\psi)\cap\cW(V_2,\psi)$ which generates both $\cW(V_i,\psi)$ and $\cW(V_i,\psi)$ as $A[G]$-modules, and so $\cW(V_1,\psi)=\cW(V_2,\psi)$. This concludes the proof of Theorem \ref{converse}.


\subsection{Generic irreducibility}

In this subsection we prove that co-Whittaker families are ``generically'' irreducible, in the sense of algebraic geometry. This sort of property is widely known in representation theory, but must be verified in this setting, as it will be used later in the proof of Theorem \ref{rikka2}. Let $e$ be a primitive idempotent of $\Spec(e\cZ)$. Since minimal primes are the generic points of irreducible components of $\Spec(e\cZ)$, the following proposition shows that $\fW|_{\fp}$ is irreducible at all points $\fp$ in a Zariski open dense subset of $\Spec(\cZ)$:
\begin{proposition}
\label{univcowhittirreducibleatminimalprimes}
Let $e$ be a primitive idempotent of $\cZ$, and suppose $\fp$ is a minimal prime ideal of $e\cZ$. Then $e\fW\otimes_{e\cZ}\kappa(\fp')$ is absolutely irreducible for all $\fp'$ in an open neighborhood of $\fp$.
\end{proposition}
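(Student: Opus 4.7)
The plan is to reduce absolute irreducibility of the fibers to the classical generic irreducibility of parabolic induction of supercuspidals in characteristic zero, and then use openness of the complement of the reducibility locus.

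First I would set up the characteristic-zero reduction. By \cite[Thm 10.8]{h_bern}, $e\cZ$ is a reduced, $\ell$-torsion free, finite-type $W(k)$-algebra, so $V(\ell)\subset\Spec(e\cZ)$ is a proper closed subset disjoint from every minimal prime. Replacing $\Spec(e\cZ)$ throughout by the open subset $\Spec(e\cZ[\ell^{-1}])$, which contains $\fp$ and is open in $\Spec(e\cZ)$, the argument takes place entirely in characteristic zero; reducedness of $e\cZ$ also gives $\kappa(\fp)=(e\cZ)_{\fp}$, the function field of an irreducible component. Under the Bernstein--Deligne description (\cite{bd}, extended integrally in \cite{h_bern}), $\Spec(e\cZ[\ell^{-1}])$ is, up to a finite cover by a stabilizer group, the affine variety $X_{\sigma}$ parametrizing unramified twists of a fixed supercuspidal support $(L,\sigma)$ attached to the block $e$.

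Second I would identify the fiber at $\fp$ with a parabolic induction. The minimal prime $\fp$ maps to a ``generic'' point $\chi_{\fp}$ of $X_{\sigma}$. By the classical reducibility theorem for parabolic induction from supercuspidals on $GL_n$ in characteristic zero (\cite{b-z}), the set of $\chi\in X_{\sigma}$ for which $i_{P}^{G}(\sigma\otimes\chi)$ is reducible is a proper Zariski-closed subvariety, so $i_{P}^{G}(\sigma\otimes\chi_{\fp})$ is absolutely irreducible and coincides with its own Whittaker model. By Proposition \ref{dominance} the fiber $V_{\fp}:=e\fW\otimes_{e\cZ}\kappa(\fp)$ is a co-Whittaker $\kappa(\fp)[G]$-module in the block $e$ whose supercuspidal support (the induced map $e\cZ\to\kappa(\fp)$) is $\chi_{\fp}$, and whose absolutely irreducible generic cosocle $\cW(V_{\fp},\psi)$ is therefore the irreducible generic representation with supercuspidal support $\chi_{\fp}$, namely $i_{P}^{G}(\sigma\otimes\chi_{\fp})$. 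The canonical surjection $V_{\fp}\twoheadrightarrow\cW(V_{\fp},\psi)\cong i_{P}^{G}(\sigma\otimes\chi_{\fp})$, combined with the equality of top derivatives $V_{\fp}^{(n)}=\cW(V_{\fp},\psi)^{(n)}=\kappa(\fp)$, should force $V_{\fp}\cong i_{P}^{G}(\sigma\otimes\chi_{\fp})$, which is absolutely irreducible.

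For openness, the reducibility locus in $X_{\sigma}$ is a proper closed subvariety and $\chi_{\fp}$ lies outside it; its image in $\Spec(e\cZ[\ell^{-1}])$ under the finite cover is again a proper closed subvariety, so its complement is an open neighborhood of $\fp$ on which the argument above applies fiberwise. Since this set is also open in $\Spec(e\cZ)$, the proposition follows. The main obstacle is the second step: upgrading the canonical surjection $V_{\fp}\twoheadrightarrow i_{P}^{G}(\sigma\otimes\chi_{\fp})$ to an isomorphism. The cleanest way to overcome this may be to invoke the structural results of \cite{h_bern}, showing that after inverting $\ell$ the module $e\fW$ is locally free over $e\cZ$ of the expected rank, so that the length of $V_{\fp'}$ as a $\kappa(\fp')[G]$-module is constant on irreducible components of $X_{\sigma}$ away from the reducibility locus and equals one there.
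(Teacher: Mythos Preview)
Your outline is close in spirit to the paper's proof, but the two differ in how the work is distributed, and your acknowledged obstacle has a cleaner resolution than the one you propose.

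\textbf{On the obstacle.} The paper resolves the surjection $V_{\fp'}\twoheadrightarrow i_P^G(\sigma\otimes\chi_{\fp'})$ being an isomorphism not via local freeness, but by invoking the essentially AIG structure: by \cite[Cor 3.2.14]{eh}, every Jordan--H\"older constituent of an essentially AIG (dual) representation over a characteristic-zero field has the same supercuspidal support. If the kernel were nonzero, each of its constituents would therefore have the supercuspidal support of $i_P^G(\sigma\otimes\chi_{\fp'})$; since the latter is absolutely irreducible, each such constituent would be isomorphic to it and hence generic, so the kernel would have nonzero $n$th derivative. This contradicts the fact that the surjection already induces an isomorphism on $(n)$th derivatives. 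This argument works at every point where the parabolic induction is irreducible, not just at the minimal prime, so it fills your gap without appealing to local freeness or constancy of length (which are not directly supplied by the references you cite). Also note a small slip: $\cW(V_{\fp},\psi)$ need not itself be the cosocle; it is a quotient whose cosocle is the irreducible generic representation.

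\textbf{On openness.} The paper does not deduce closedness of the reducibility locus of $e\fW$ from the parabolic induction comparison. Instead it proves closedness intrinsically: for each compact open $K$, the sheaf map $P_K:\cH(G,K,e\cZ)\to\End_{e\cZ}((e\fW)^K)$ has cokernel a finitely generated $e\cZ$-module, hence closed support; together with the equivalence between simplicity of $V$ and simplicity of $V^K$ for all $K$ (a standard Hecke-algebra criterion), this shows the reducible locus is contained in a closed set. Having established closedness, the paper then only needs a \emph{single} point of irreducibility in each component, which it gets from the parabolic induction comparison at one generic unramified twist. Your route---carry out the comparison at every point in the irreducible-induction locus and transport closedness from $X_\sigma$---also works once the obstacle above is resolved, and avoids the Hecke-algebra argument; the paper's route is a bit more robust since closedness is established before any comparison is made.
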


\begin{proof}
Let $\Pi:=e(\cInd\psi)$. We begin by showing that the locus of points $\fp'$ such that $\Pi\otimes\kappa(\fp')$ is reducible is contained in a closed subset. For a ring $R$ and $K$ a compact open subgroup let $\cH(G,K,R)$ be the algebra of smooth compactly supported functions $G\rightarrow R$ which are $K$-fixed under left and right translations (see \cite[I.3]{vig}). $\cH(G,K,e\cZ)$ and $\Pi$ form sheaves of $\Spec(e\cZ)$-modules, and following \cite[IV.1.2]{b_rum}, the map $P_K:\cH(G,K,e\cZ)\rightarrow \End_{e\cZ}(\Pi^K)$ which sends $h$ to $\Pi(h)$ is a morphism of sheaves. We will require the following:

\begin{lemma}
\label{irreducibilityandKfixedvectors}
Let $R$ be a commutative ring with unit, let $G$ be a locally profinite group with a cofinal system $\Omega$ of compact open subgroups whose pro-order is invertible in $R$, and let $V$ be a smooth $R[G]$-module. Then $R$ is simple as an $R[G]$-module if and only if, for any $K$ in $\Omega$, $V^K$ is either zero or simple as an $\cH(G,K,R)$-module.
\end{lemma}
\begin{proof}
Recall that $\cH(G,R)$ acts on $\Rep_R(G)$ and contains, for any $K$ in $\Omega$, a projector $e_K$ taking $V$ to the $\cH(G,K,R)$-module $V^K$ (\cite[I.4.4, I.4.5]{vig}). The functor $V\mapsto V^K$ is exact (\cite[I.4.6]{vig}). Therefore, we may use the proof of \cite[4.3 Corollary]{bh} verbatim.
\end{proof}

Continuing with the proof of Proposition \ref{univcowhittirreducibleatminimalprimes}, suppose $\Pi|_{\fp'}$ is reducible. Then by Lemma \ref{irreducibilityandKfixedvectors}, there exists a $K$ such that $(\Pi|_{\fp'})^K$ is nonzero and reducible. Since $(\Pi|_{\fp'})^K$ is a finite dimensional $\kappa(\fp')$ vector space, a proper $\cH(G,K,\kappa(\fp'))$-stable subspace $Y$ gives  $\{\phi\in \End_{\kappa(\fp')}(\Pi^K):\phi(Y)\subset Y\}$, which is a proper submodule of $\End_{\kappa(\fp')}(\Pi^K)$ containing the image of $P_K\otimes\kappa(\fp')$. The set of points $\fp$ where $(P_K)_{\fp'}$ fails to be surjective is contained in the support of the finitely generated $e\cZ$-module $\frac{\End(\Pi^K)}{\Img(P_K)}$, which is closed. For any such point $\fp$, $(\Pi|_{\fp'})^K=(\Pi^K)|_{\fp'}$ must then be reducible by Schur's lemma.  Now it is only left to show that in each irreducible component of $\Spec(e\cZ)$ there is at least one point where we have irreducibility (for that point then lives in an open neighborhood of irreducible points that contains the generic point, i.e. the minimal prime).

Suppose $e = e_{[L,\pi]}$ is the idempotent corresponding to the mod $\ell$ inertial equivalence class $[L,\pi]$ in the Bernstein decomposition of $\Rep_{W(k)}(G)$ (see \cite{h_whitt}). By \cite[Prop 11.1]{h_bern}, $e\cZ\otimes_{W(k)}\overline{\cK} \cong \prod_{M,\pi'}\cZ_{\overline{\cK},M,\pi'}$ where $M,\pi'$ runs over inertial equivalence classes of $\Rep_{\overline{\cK}}(G)$ whose mod $\ell$ inertial supercuspidal support equals $(L,\pi)$, and $\cZ_{\overline{\cK},M,\pi'}$ denotes the center of $\Rep_{\overline{\cK}}(G)_{M,\pi'}$. The ring $\cZ_{\overline{\cK},M,\pi'}$ is a Noetherian normal domain. Since $e\cZ$ is reduced and $\ell$-torsion free, none of its minimal primes contain $\ell$. Inverting $\ell$, this decomposition gives isomorphisms
$$\prod_{\fp\text{ minimal}}(e\cZ/\fp)\otimes_{W(k)}\overline{\cK}\cong e\cZ\otimes_{W(k)}\overline{\cK} \cong \prod_{M,\pi'}\cZ_{{\overline{\cK}},M,\pi'}.$$
But $(e\cZ/\fp)\otimes_{W(k)}\overline{\cK}$ and $\cZ_{{\overline{\cK}},M,\pi'}$ are domains, so cannot factor as direct products, and therefore neither decomposition is finer than the other. In particular, for each minimal prime there exists $M,\pi'$ such that $(e\cZ/\fp)\otimes_{W(k)}\overline{\cK}\cong \cZ_{\overline{\cK},M,\pi'}$. Hence the algebraic closure $\overline{\kappa(\fp)}$ of $\kappa(\fp)$ is $\Frac(\cZ_{\overline{\cK},M,\pi'})$.

Given such an $M,\pi'$, we have by \cite{bd} that $$\cZ_{\overline{\cK},M,\pi'}:=\cZ(\Rep_{\overline{\cK}}(G)_{M,\pi'}) \cong (\overline{\cK}[M/M^{\circ}]^H)^{W(\pi')},$$ where $M^{\circ}$ is the subgroup generated by all the compact subgroups (which equals the set of $m\in M$ with $\det m\in U_F$). The linear algebraic group over $\overline{\cK}$ of unramified characters of $M$ acts on $\Rep_{\overline{\cK}}(M)$, and can be identified with $\Spec(\overline{\cK}[M/M^{\circ}])$. $H$ denotes the finite subgroup stabilizing $\pi'$. The Weyl group of $G$ also acts on $\Rep_{\overline{\cK}}(G)$ and $W(\pi')$ is the subgroup stabilizing $\pi'$ up to twisting by unramified characters. See \cite[Section 3]{h_bern} for a nice summary. 

By \cite[Theorem 27]{b_rum}, if $\pi'$ is our given supercuspidal representation of $\cK$, then $i_P^G(\pi'\otimes \chi)$ is absolutely irreducible for $\chi$ a generic $\overline{\cK}$ point of $\overline{\cK}[M/M^{\circ}]$. Let $\fq$ be a point of $e\cZ$ lying under the point $\chi$. Since $\pi'\otimes\chi$ is cuspidal, $(i_P^G(\pi'\otimes\chi))^{(n)}$ is one-dimensional and therefore we have a map $e(\cInd\psi)\otimes_{e\cZ}\kappa(\fq)\rightarrow i_P^G(\pi'\otimes\chi)$ coming from reciprocity. Since $i_P^G(\pi'\otimes\chi)$ is absolutely irreducible this map is surjective. The kernel $K$ of this map must be zero by the following reasoning. By \cite[Cor 3.2.14]{eh} all the Jordan-Holder constituents of an essentially AIG representation over $\overline{\cK}$ have the same supercuspidal support, so the same is true for representations with essentially AIG dual. Therefore, if $K$ were nonzero it would have all Jordan-Holder constituents having the same supercuspidal support as $i_P^G(\pi'\otimes\chi)$, in particular those constituents would be irreducible and equivalent to $i_P^G(\pi'\otimes\chi)$. But then $K^{(n)}$ is nonzero, which contradicts the fact that $e(\cInd\psi)\otimes_{\cZ_{M,\pi'}}\kappa(\fq)\rightarrow i_P^G(\pi'\otimes\chi)$ is a $G$-surjection of Whittaker type representations. Hence $e(\cInd\psi)\otimes\kappa(\fq)$ is absolutely irreducible. 
\end{proof}

\subsection{Proof of the vanishing theorem}
\label{proofofvanishingthm}
This section is devoted to the proof of Theorem \ref{rikka2}. We denote $\psi_A$ by $\psi\otimes_{W(k)} A$, then $\cInd_N^G\psi_A \cong (\cInd_N^G\psi)\otimes_{W(k)}A$. 

There exists a primitive idempotent $e$ in $\cZ$ such that $eH\neq 0$. Moreover, there is some compact open subgroup $K$ such that $e_KeH=eH$, where $e_K$ is the projector $V\rightarrow V^K$. Letting $e' = e*e_K*e$, we have $e'H=eH\neq 0$.

Let $R:=e\cZ\otimes_{W(k)}A$. The $W(k)$-module $e'(\cInd\psi_A)\cong e'(\cInd\psi)\otimes_{W(k)}A$ carries the structure of an $R$-module, by considering it as an external tensor product. For convenience denote the $R$-module $e'(\cInd\psi)\otimes_{W(k)}A$ by $\fM$. 

\begin{lemma}
\label{torsionfree}
$\fM$ is finitely generated and torsion-free as an $R$-module. In particular, $\fM$ embeds in a free $R$-module.
\end{lemma}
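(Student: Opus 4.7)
The plan is to identify $e'(\cInd\psi)$ concretely, establish finite generation via admissibility, and establish torsion-freeness using the universal co-Whittaker structure together with an evaluation-functional argument; the embedding in a free $R$-module will then follow from standard commutative algebra over reduced Noetherian rings.

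First I would unwind what $e'(\cInd\psi)$ is: since $e^2=e$ is a central idempotent and $e_K$ is the projector onto $K$-fixed vectors, $e'=e\ast e_K\ast e$ acts on $\cInd\psi$ as the composition, yielding $e'(\cInd\psi)=(e\fW)^K$ with $\fW:=\cInd_N^G\psi$ and $e\fW:=e\cdot\fW$. Flatness of $A$ over $W(k)$ (since $A$ is $\ell$-torsion-free) then gives $\fM\cong (e\fW)^K\otimes_{W(k)}A$ as $R$-modules. For finite generation I would invoke that $e\fW=\fW\otimes_{\cZ}e\cZ$ is a co-Whittaker $e\cZ[G]$-module by Proposition~\ref{dominance} and hence admissible, so $(e\fW)^K$ is finitely generated over $e\cZ$; scalar extension to the Noetherian ring $R$ preserves this.

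The heart of the argument is torsion-freeness, which I would deduce from the stronger statement that $e\fW$ itself is $e\cZ$-torsion-free. The key observation is that $e\fW\subseteq \Ind_N^G\psi$ as functions on $G$, and for any $G$-stable submodule $V\subseteq \Ind_N^G\psi$ the vanishing $V^{(n)}=V_{N,\psi}=0$ forces $V=0$: the evaluation functional $v\mapsto v(1)$ satisfies $(nv)(1)=\psi(n)v(1)$ and thus factors through $V^{(n)}$, so if this quotient vanishes then $v(1)=0$ for every $v$, and by $G$-equivariance $v\equiv 0$. Given a non-zero-divisor $r\in e\cZ$, the kernel $(e\fW)[r]$ is $G$-stable (as $r$ is central); exactness of the derivative functor (valid because pro-orders of compact open subgroups of $N$ are invertible in $W(k)$) places $((e\fW)[r])^{(n)}$ inside $(e\fW)^{(n)}\cong e\cZ$, which is free of rank one by the Whittaker-type condition. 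This submodule is $r$-torsion in $e\cZ$, hence zero, so the observation yields $(e\fW)[r]=0$; this transports to torsion-freeness of the submodule $(e\fW)^K$.

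Finally, by \cite[Thm.~10.8]{h_bern} the ring $e\cZ$ is reduced and Noetherian, and over such a ring a finitely generated torsion-free module embeds in a finite free module: one embeds it in its total quotient ring (a finite product of residue fields at minimal primes) and clears denominators. Such an embedding $(e\fW)^K\hookrightarrow (e\cZ)^n$, after tensoring with the flat $W(k)$-algebra $A$, yields $\fM\hookrightarrow R^n$, giving both the ``in particular'' claim and torsion-freeness of $\fM$ over $R$. I expect the evaluation-functional trick to be the main conceptual ingredient; the step most worth double-checking is the identification $(e\fW)^{(n)}\cong e\cZ$, which comes from Proposition~\ref{dominance} combined with the Whittaker-type condition that $V_{N,\psi}$ be free of rank one.
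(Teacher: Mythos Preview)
Your proposal is correct, and the overall scaffolding matches the paper's: finite generation via admissibility of $e\fW$ over $e\cZ$ (the paper cites \cite[Prop~5.3]{h_whitt} directly rather than going through Proposition~\ref{dominance}, but this is cosmetic), torsion-freeness of $e'(\cInd\psi)$ over $e\cZ$, the fact that a finitely generated torsion-free module over a reduced Noetherian ring embeds in a finite free module, and finally base change along the flat map $e\cZ\rightarrow R$ to obtain $\fM\hookrightarrow R^r$.

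The one genuine difference is in the torsion-freeness step. The paper disposes of it in a single line, asserting that it ``follows from its torsion free-ness at characteristic zero primes''---implicitly localizing at the minimal primes of $e\cZ$ (all of characteristic zero, since $e\cZ$ is reduced and $\ell$-torsion-free) and appealing to classical facts there. Your argument is instead intrinsic: you use that any $G$-stable submodule of $\Ind_N^G\psi$ with vanishing $(n)$-th derivative must be zero (via the evaluation-at-identity functional), combined with exactness of $(-)^{(n)}$ and the identification $(e\fW)^{(n)}\cong e\cZ$ from the co-Whittaker property, to conclude that $(e\fW)[r]=0$ for any non-zero-divisor $r$. This is more self-contained and avoids any appeal to characteristic-zero representation theory; it also makes transparent exactly which structural feature of $e\fW$ is responsible for torsion-freeness. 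The paper's approach is terser but leaves the reader to reconstruct why torsion-freeness at minimal primes suffices. Your worry about the identification $(e\fW)^{(n)}\cong e\cZ$ is unfounded: it is exactly the Whittaker-type condition in the definition of co-Whittaker, and Proposition~\ref{dominance} supplies it.
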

\begin{proof}
Since $e(\cInd\psi)$ is admissible as an $e\cZ$-module \cite[Prop 5.3]{h_whitt}, $\fM$ is finitely generated as an $R$-module. 

Next, note that $e'(\cInd\psi)$ is torsion-free as an $e\cZ$-module. This follows from its torsion free-ness at characteristic zero primes. Since $A$ and $e\cZ$ are both reduced and flat over $W(k)$, the ring $R$ is reduced and flat over $W(k)$. Now, a module over a reduced ring is torsion-free if and only if it can be embedded in a free module \cite[1.5,1.7]{wiegand_modules}. Thus we focus on showing that $\fM$ can be embedded in a free $R$-module.

Since $e(\cInd\psi)$ is torsion-free over $e\cZ$ there is an embedding of $e\cZ$-modules $e'(\cInd\psi)\rightarrow (e\cZ)^r$ for some $r$. Since $W(k)\rightarrow A$ is flat, $e\cZ\rightarrow R$ is flat, since flatness is preserved under base-change. Now tensor this embedding with $R$ to get a map of $R$-modules $\fM \cong e'(\cInd\psi)\otimes_{e\cZ}R\rightarrow (e\cZ)^r\otimes_{e\cZ}R\cong R^r$, where the first isomorphism is the canonical one $$e'(\cInd\psi)\otimes_{W(k)}A\cong \Big(e'(\cInd\psi)\otimes_{e\cZ}e\cZ\Big)\otimes_{W(k)}A\cong e'(\cInd\psi)\otimes_{e\cZ}R.$$ But since flatness is preserved by base change, $A$ being flat over $W(k)$ implies $R$ flat over $e\cZ$. Hence, the map $\fM\rightarrow R^r$ is an embedding, so $\fM$ is torsion-free over $R$.
\end{proof}

\begin{lemma}
\label{Bexists}
The set $\{\fq\in\Spec(R):eH\in\fq \fM\}$ is contained in a closed subset $V$ of $\Spec(R)$ such that $V\neq \Spec(R)$. Moreover, this closed subset does not contain the generic fiber $\{\fq\in \Spec(R):\ell\notin\fq\}$.
\end{lemma}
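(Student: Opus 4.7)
The plan is to use Lemma \ref{torsionfree} to pass from the abstract condition ``$eH \in \fq\fM$'' to an explicit coordinate condition in a free module, where ideal-theoretic reasoning becomes straightforward. Since $\fM$ is finitely generated and torsion-free over $R$ by Lemma \ref{torsionfree}, and torsion-free modules over reduced rings embed into free modules, we obtain an injection $\phi: \fM \hookrightarrow R^r$ for some finite $r$ (taking $r$ finite is possible because $\fM$ is finitely generated, so the image lies in a finitely generated free submodule). Write $\phi(eH) = (r_1,\ldots,r_r) \in R^r$.

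The first key observation is that if $\fq \in \Spec(R)$ satisfies $eH \in \fq\fM$, then $\phi(eH) \in \phi(\fq\fM) = \fq\phi(\fM) \subseteq \fq R^r$, so each $r_i \in \fq$. Therefore
\[
\{\fq \in \Spec(R) : eH \in \fq\fM\} \subseteq V((r_1,\ldots,r_r)) =: V,
\]
which is closed in $\Spec(R)$.

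Next I would verify that $V$ is a proper closed subset. By construction $e'H = eH \neq 0$, and $\phi$ is injective, so $\phi(eH) \neq 0$ in $R^r$, i.e. some $r_i \neq 0$. To invoke reducedness of $R$, I would apply Lemma \ref{tensorproductreducedflat} to the pair $(e\cZ, A)$: by \cite[Thm 10.8]{h_bern} the ring $e\cZ$ is reduced and $\ell$-torsion free, and $A$ is so by hypothesis, hence $R = e\cZ \otimes_{W(k)} A$ is reduced and $\ell$-torsion free. Since $R$ is reduced, the nilradical is zero, so the nonzero ideal $(r_1,\ldots,r_r)$ is not contained in every prime, giving $V \neq \Spec(R)$.

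For the final claim, the $\ell$-torsion-freeness of $R$ gives that $\ell$ is a non-zero-divisor in $R$, so the localization map $R \to R[\ell^{-1}]$ is injective. Hence any nonzero $r_i$ has nonzero image in $R[\ell^{-1}]$, and $(r_1,\ldots,r_r)R[\ell^{-1}]$ is a nonzero ideal in the reduced ring $R[\ell^{-1}]$. Its vanishing locus is a proper closed subset of $\Spec(R[\ell^{-1}]) = \{\fq \in \Spec(R) : \ell \notin \fq\}$, so $V$ does not contain the generic fiber. There is no real obstacle here: all the work has been done in Lemma \ref{torsionfree} (the embedding into a free module) and Lemma \ref{tensorproductreducedflat} (reducedness of the tensor product); the present argument is a direct translation of nonvanishing of $eH$ into nonvanishing of an ideal.
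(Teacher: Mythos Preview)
Your proof is correct and follows essentially the same approach as the paper: embed $\fM$ into $R^r$ via Lemma \ref{torsionfree}, write $eH=(h_1,\dots,h_r)$, and take $V=V(h_1,\dots,h_r)$. The only difference is cosmetic: for the ``moreover'' clause the paper observes that some minimal prime of $R$ avoids a nonzero $h_i$ (and minimal primes of a reduced $\ell$-torsion-free ring automatically lie in the generic fiber), whereas you argue via the injection $R\hookrightarrow R[\ell^{-1}]$; both reach the same conclusion.
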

\begin{proof}
From Lemma \ref{torsionfree}, there is an embedding $\fM\subset R^r$, so $\fq\fM\subset \fq^r$. Thus if $eH = (h_1,\dots,h_r)$ is in $\fq\fM$, each $h_i$ is in $\fq$. Hence $\fq$ is in the closed set $V:= V(h_1)\cap\dots\cap V(h_r)$. But $V\neq \Spec(R)$ because some $h_i$ is nonzero (so there is some minimal prime not containing $h_i$, by reducedness).
\end{proof}

Thus there is some nonempty open subset $D\subset \Spec(R)$ in the generic fiber consisting of points $\fq$ such that $eH\notin \fq \fM$.

\begin{lemma}
\label{infinitepointsdense}
Let $K$ be an infinite field and let $B$ be any infinite subset of $K$. Then the set of points $(X_1 - b_1,\dots,X_n - b_n)$ such that $b_i\in B$ is dense in $\Spec(K[X_1,\dots,X_n])$.
\end{lemma}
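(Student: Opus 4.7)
The plan is to unwind what density means in the Zariski topology and reduce it to a classical fact about polynomials vanishing on products.

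Concretely, a subset $S \subseteq \Spec(K[X_1,\dots,X_n])$ is dense if and only if every nonempty basic open $D(f)$ meets $S$; and the maximal ideal $\fm_{b} := (X_1-b_1,\dots,X_n-b_n)$ lies in $D(f)$ precisely when $f(b_1,\dots,b_n)\neq 0$. So the proposition is equivalent to the statement that for every nonzero $f \in K[X_1,\dots,X_n]$ there exist $b_1,\dots,b_n \in B$ with $f(b_1,\dots,b_n) \neq 0$.

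I would prove this last statement by induction on $n$. For $n=1$, a nonzero polynomial in $K[X_1]$ has at most $\deg f$ roots in $K$, and since $B$ is infinite we can pick $b_1 \in B$ with $f(b_1) \neq 0$. For the inductive step, write
\[
f = \sum_{i=0}^{d} g_i(X_1,\dots,X_{n-1})\,X_n^i
\]
with $g_d \neq 0$ in $K[X_1,\dots,X_{n-1}]$. By the inductive hypothesis there exist $b_1,\dots,b_{n-1} \in B$ with $g_d(b_1,\dots,b_{n-1}) \neq 0$. Then $f(b_1,\dots,b_{n-1},X_n) \in K[X_n]$ is a nonzero polynomial of degree $d$, so the $n=1$ case produces $b_n \in B$ with $f(b_1,\dots,b_n) \neq 0$.

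There is no real obstacle here; the only content is the elementary observation that a nonzero polynomial cannot vanish on an infinite product grid, together with the identification of basic open sets in $\Spec(K[X_1,\dots,X_n])$ with non-vanishing loci. The hypothesis that $K$ is infinite is not actually used in the argument beyond what is provided by $B$ being infinite: all that matters is that $B$ itself is an infinite subset of $K$.
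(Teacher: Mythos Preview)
Your proof is correct and follows essentially the same approach as the paper: both reduce density to the statement that a nonzero polynomial cannot vanish on all of $B^n$, and both prove this by induction on $n$ using the fact that a nonzero univariate polynomial has only finitely many roots. The only cosmetic difference is that the paper organizes the inductive step geometrically (slicing by hyperplanes $X_n=b$ and arguing by contradiction that $X_n-b$ would divide $f$ for infinitely many $b$), whereas you argue directly via the leading coefficient in $X_n$.
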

\begin{proof}
We proceed by induction on $n$. If $n=1$, we can show that every principal open subset intersects the set of points $\{(X-b)\}$. If $f\in K[X]$ were nonzero, then $f$ could not be divisible by $(X-b)$ for infinitely many $b$, whence there are points $(X-b)$ in $D(f)$.

Suppose the result holds for $n-1$. We denote by $S$ the subset of points $(X_1 - b_1,\dots,X_n - b_n)$, and choose an arbitrary $f$ nonzero in $K[X_1,...,X_n]$ and consider $V=V(f)$ the set of prime ideals containing $f$. It suffices to show that $S$ cannot be contained in $V$. Consider the map $K[X_1,...,X_n] \rightarrow K[X_1,...,X_{n-1}]$ given by $X_n \mapsto b$ for some $b\in B$. This gives the closed immersion $H\rightarrow \mathbb{A}_K^n$ of the hyperplane $H:=\{X_n = b\}$. By the induction hypothesis the subset $T$ of points $(X_1-b_1,..., X_{n-1}-b_{n-1}, X_n - b)$ is dense in $H$. Suppose $V$ contains $S$, then $V\cap H\supset S\cap H \supset T$, meaning $V\cap H = H$. Since $b$ was arbitrary we've shown that $V$ contains every one of the distinct hyperplanes $\{X_n = b\}$ for $b\in B$. In particular this means each $X_n-b$ divides $f$, which is impossible.
\end{proof}

\begin{proposition}
\label{Wpointsdense}
Let $\cK$ be $\Frac{W(k)}$ and $e$ be a primitive idempotent of $\cZ$. There is a finite extension $\cK\subset\cK'$, depending only on $e$, with rings of integers $\cO'$ such that the set of points $\fp=\ker(e\cZ\xrightarrow{f} \cO')$ for some map $f:e\cZ\rightarrow \cO'$ is dense in $\Spec(e\cZ)[\frac{1}{\ell}]$.
\end{proposition}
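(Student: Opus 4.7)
The plan is to exploit the Bernstein--Deligne structure of $e\cZ$ after inverting $\ell$, so as to reduce the density assertion to the geometric input of Lemma \ref{infinitepointsdense} on an algebraic torus. The proof of Proposition \ref{univcowhittirreducibleatminimalprimes} already recalled the finite product decomposition $e\cZ \otimes_{W(k)} \overline{\cK} \cong \prod_j (\overline{\cK}[M_j/M_j^\circ]^{H_j})^{W(\pi'_j)}$, where each $M_j/M_j^\circ$ is a finitely generated free abelian group (so the inner algebra is a split Laurent polynomial ring) and $H_j$, $W(\pi'_j)$ are finite groups. First I would choose a finite Galois extension $\cK'/\cK$, depending only on $e$, over which all idempotents realizing this decomposition and all finite group actions are defined, so that
$$e\cZ \otimes_{W(k)} \cK' \cong \prod_j R_j,\qquad R_j \cong (\cK'[X_1^{\pm 1},\dots,X_{n_j}^{\pm 1}])^{G_j},$$
with $G_j$ a finite group acting on the split torus. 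Let $\cO'$ denote the ring of integers of $\cK'$.

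Next, for each $j$ I would apply Lemma \ref{infinitepointsdense} with $K=\cK'$ and $B=\cO'^\times$, noting that $\cO'^\times$ is infinite because $\cO'$ is a complete DVR of generic characteristic zero. This produces a Zariski-dense set of kernels of $\cK'$-algebra maps $\cK'[X_1^{\pm 1},\dots,X_{n_j}^{\pm 1}]\to\cO'$ (sending each $X_i$ into $\cO'^\times$). The quotient morphism $\Spec(\cK'[X_1^{\pm 1},\dots,X_{n_j}^{\pm 1}])\to\Spec(R_j)$ is continuous and surjective (it is finite), hence carries dense subsets to dense subsets; this yields a dense set of primes of $R_j$ realized as kernels of $\cK'$-algebra maps $R_j\to\cO'$.

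Finally, the morphism $\Spec(e\cZ\otimes_{W(k)}\cK')\to\Spec(e\cZ[\tfrac{1}{\ell}])$ induced by the finite faithfully flat extension $\cK\to\cK'$ is continuous and surjective, so it preserves density. Any $\cK'$-algebra map $R_j\to\cO'$ composes with $e\cZ\to e\cZ\otimes\cK'\twoheadrightarrow R_j$ to a $W(k)$-algebra map $f:e\cZ\to\cO'$, and $\ker(f)$ is the image under the displayed morphism of the kernel of $R_j\to\cO'$. Combining the three steps, the collection of such kernels $\ker(f)$ is dense in $\Spec(e\cZ)[\tfrac{1}{\ell}]$, as required.

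The main technical subtlety I anticipate is Step 1: verifying that the Bernstein-type decomposition together with its explicit Laurent polynomial invariant description can be realized over a \emph{finite} extension $\cK'$ of $\cK$. This requires tracking the continuous action of $\Gal(\overline{\cK}/\cK)$ on the finite set of components, on the finite groups $H_j$ and $W(\pi'_j)$, and on the characters through which these groups act, and then choosing $\cK'$ to split all of this finite data. Once the descent is in hand, the density argument reduces cleanly to Lemma \ref{infinitepointsdense} together with the functoriality of finite surjective morphisms.
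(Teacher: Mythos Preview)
Your proposal is correct and follows essentially the same route as the paper: reduce to the Bernstein--Deligne description of $e\cZ[\tfrac{1}{\ell}]$ as (a product of) invariants in a Laurent polynomial ring over a finite extension $\cK'$, apply Lemma \ref{infinitepointsdense} with $B=(\cO')^\times$ to obtain a dense set of $\cO'$-valued points on the torus, and push this density down through the finite surjection to $\Spec(R_j)$ and then through the faithfully flat base change $\Spec(e\cZ\otimes\cK')\to\Spec(e\cZ[\tfrac{1}{\ell}])$. The only noteworthy difference is in how Step 1 is handled: where you propose to descend the $\overline{\cK}$-decomposition by tracking the Galois action on the finite combinatorial data, the paper instead constructs the map $f_i(e\cZ\otimes\cK_i)\to(\cK_i[M/M^\circ]^H)^{W(\pi')}$ directly over the finite extension $\cK_i$ (via the action of the center on $i_P^G(\pi'\otimes\chi_X)$) and then invokes faithful flatness of $\cK_i\to\overline{\cK}$ to deduce it is an isomorphism from the known result over $\overline{\cK}$.
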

\begin{proof}
First, note that the proof in Lemma \ref{infinitepointsdense} carries over for polynomial rings with any number of the variables $X_i$ inverted. 

By \cite[Prop 11.1]{h_bern}, $e\cZ\otimes_{W(k)}\overline{\cK} \cong \prod_{M,\pi'}\cZ_{\overline{\cK},M,\pi'}$ where $\cZ_{\overline{\cK},M,\pi'}$ denotes the center of $\Rep_{\overline{\cK}}(G)_{M,\pi'}$. From \cite{bd} we know $\cZ_{\overline{\cK},M,\pi'}\cong (\overline{\cK}[M/M^{\circ}]^H)^{W(\pi')}$, notation being the same as in the proof of Proposition \ref{univcowhittirreducibleatminimalprimes}. Thus, there exists a complete system of primitive orthogonal idempotents $\{f_1,\dots,f_s\}$, where $s$ is the number of components in the decomposition of $e\cZ\otimes\overline{\cK}$, such that $\sum_if_i=1$ in $e\cZ\otimes_{W(k)}\overline{\cK}$, and such that $f_i(e\cZ\otimes_{W(k)}\overline{\cK}) \cong (\overline{\cK}[M/M^{\circ}]^H)^{W(\pi')}$ for some $M,\pi'$. We argue that this isomorphism is in fact defined over a finite extension of $\cK$. 

There is some finite extension $\cK_i$ of $\cK$ such that $f_i$ lives in $e\cZ\otimes \cK_i$. We now check that the construction in \cite[Prop 2.11]{bd} of the natural map $f_i(e\cZ\otimes_{W(k)}\cK_i)\rightarrow (\cK_i[M/M^{\circ}]^H)^{W(\pi')}$ of $\cK_i$-algebras can be carried out over the field $\cK_i$, and is compatible with extension of the base field. Since $M/M^{\circ}\cong \ZZ^r$ is a lattice, the ring $\cK_i[M/M^{\circ}]$ can be identified with a polynomial ring $\cK_i[X_1^{\pm1},\dots,X_{r_i}^{\pm1}]$ for some integer $r_i$ and indeterminates $X_1,\dots,X_{r_i}$. Let $\chi_X$ be the homomorphism $M/M^{\circ}\rightarrow \cK_i[X_1^{\pm1},\dots,X_{r_i}^{\pm1}]^{\times}$ which sends $(n_1,\dots,n_r)\in \ZZ^{r_i}$ to $X_1^{n_1}\cdots X_{r_i}^{n_{r_i}}$. This gives $i_P^G(\pi'\otimes\chi_X)$ the structure of a $\cK_i[X_1^{\pm1},\dots,X_{r_i}^{\pm1}]^H[G]$-module (recall that $H$ is the subgroup of unramified characters stabilizing $\pi'$). If $z$ is an element of $f_i(e\cZ\otimes_{W(k)}\cK_i)$, then $z$ defines an endomorphism in $\End_G(i_P^G(\pi'\otimes\chi_X))$, so we have described a map of rings $f_i(e\cZ\otimes_{W(k)}\cK_i)\rightarrow \End_G(i_P^G(\pi'\otimes\chi_X))$. We argue that the image $I$ of this map lives in the subring $(\cK_i[X_1^{\pm1},\dots,X_{r_i}^{\pm 1}]^H)^{W(\pi')}$ of $\End_G(i_P^G(\pi'\otimes\chi_X))$ consisting of polynomials fixed by the action of both $H$ and $W(\pi')$ ($W(\pi')$ acts by permuting the indeterminates $X_i$). Since $i_P^G(\pi'\otimes\chi)$ is irreducible for all $\chi$ in an open dense subset of $\overline{\cK}[M/M^{\circ}]^H$ (\cite[Thm 27]{b_rum}), it is absolutely irreducible for all $\chi$ in an open dense subset of $\cK_i[M/M^{\circ}]^H$. When $\fp\in\Spec(\cK_i[M/M^{\circ}]^H)$ is such that $i_P^G(\pi'\otimes\chi_X) \mod \fp$ is absolutely irreducible, $z$ acts by a scalar. Thus for an open dense subset of points $\fp$ we have $I_{\fp}\subset(\cK_i[M/M^{\circ}]^H)_{\fp}$, and since this is a closed condition, it holds for all points $\fp$. Then, since $\End_G(i_P^G(\pi'\otimes\chi_X))$ is finitely generated over $\cK_i[M/M^{\circ}]^H$, it follows that $I\subset \cK_i[M/M^{\circ}]^H$. Moreover, the action of $W(\pi')$ on the group of unramified characters translates to an action on $i_P^G(\pi'\otimes\chi_X)$ via permuting the $X_i$'s in $\cK_i[X_1^{\pm1},\dots,X_{r_i}^{\pm 1}]$. Since $z$ commutes with this action, we must have $I\subset (\cK_i[M/M^{\circ}]^H)^{W(\pi')}$.

By construction, after base changing from $\cK_i$ to $\overline{\cK}$, this map becomes the isomorphism $f_i(e\cZ\otimes_{W(k)}\overline{\cK}) \cong (\overline{\cK}[M/M^{\circ}]^H)^{W(\pi')}$ of \cite[Th\'{e}or\`{e}me 2.13]{bd}. Since $\cK_i\rightarrow \overline{\cK}$ is faithfully flat, this implies $f_ie\cZ\otimes_{W(k)}\cK_i\rightarrow (\cK_i[M/M^{\circ}]^H)^{W(\pi')}$ is an isomorphism. Let $\cK'$ be the smallest extension of $\cK$ containing all the extensions $\{\cK_1,\dots,\cK_s\}$. Since $e\cZ[\frac{1}{\ell}]\rightarrow e\cZ\otimes_{W(k)}\cK'$ is faithfully flat, the induced map on spectra is surjective. Since $e\cZ\otimes\cK'$ decomposes as a product of $f_i(e\cZ\otimes\cK')\cong(\cK'[X_1^{\pm1},\dots,X_{r_i}^{\pm1}]^H)^{W(\pi')}$, we therefore have integers $r_i$ such that there is a continuous surjection $\bigsqcup_{i=1}^s\Spec(\cK'[X_1^{\pm 1},\dots,X_{r_i}^{\pm 1}])\rightarrow \Spec(e\cZ[\frac{1}{\ell}])$. 

Let $\cO'$ be the ring of integers of $\cK'$. Lemma \ref{infinitepointsdense} tells us that the set of primes $(X_1-b_1,\dots,X_{r_i}-b_{r_i})$ for $b_j\in (\cO')^{\times}$ is dense in $\Spec(\cK'[X_1^{\pm 1},\dots,X_{r_i}^{\pm 1}])$. Therefore the set of prime ideals $\fp_{f_i}$ occurring as the kernel of a map $f_i:\cO'[X_1^{\pm 1},\dots,X_{r_i}^{\pm 1}]\rightarrow \cO'$ is dense in the generic fiber of $\Spec(\cO'[X_1^{\pm 1},\dots,X_{r_i}^{\pm 1}])$.

Thus for each $i$ the set of $\fp_{f_i}$ is dense in the generic fiber of the $i$'th component of the disjoint union. Since the image of a dense set under a surjective continuous map is dense, we get a dense set of points in the generic fiber of $\Spec(e\cZ)$, each of which is valued in $\cO'$. 
\end{proof}

Since the algebra $W(k)\rightarrow A$ is flat and finite type, the natural map $e\cZ\rightarrow R$ is flat and finite type. Let $\phi:\Spec(R)\rightarrow \Spec(e\cZ)$ be the map of spectra induced by $e\cZ\rightarrow R$. Since these rings are Noetherian, $\phi$ is open, so $\phi(D)$ forms an open subset of $\Spec(e\cZ)$. Moreover, since $D$ intersects the generic fiber of $\Spec(R)$, $\phi(D)$ intersects the generic fiber of $\Spec(e\cZ)$, and therefore contains a generic point of $\Spec(e\cZ)$. By definition, all points $\fp$ in $\phi(D)$ satisfy $eH\notin \fp\fM$.

By intersecting with the open set in Proposition \ref{univcowhittirreducibleatminimalprimes}, we get an open neighborhood of this generic point consisting of points $\fp\in\Spec(e\cZ)$ such that $eH\notin \fp\fM$ and $e(\cInd\psi)\otimes_{e\cZ}\kappa(\fp)$ is absolutely irreducible. 

Let $\cO'$ be the complete discrete valuation ring, which is finite over $W(k)$, appearing in the conclusion of Proposition \ref{Wpointsdense}. Proposition \ref{Wpointsdense} now allows us to conclude there exists an $\cO'$-valued point $f:e\cZ\rightarrow \cO'$ with $\fp:=\ker(f)\in \Spec(e\cZ)$ satisfying:
\begin{itemize}
\item[(i)] $eH\notin \fp\fM$
\item[(ii)] The fiber $e(\cInd\psi)\otimes_{e\cZ}\kappa(\fp)$ is absolutely irreducible.
\end{itemize}

We will now use this point $\fp$ to construct a Whittaker function as in Theorem \ref{rikka2}.

Define $\cO:=e\cZ/\fp\subset \cO'$. The ring $\cO$ is an $\ell$-torsion free $W(k)$-algebra which is an integral domain, occuring as an intermediate extension $W(k)\subset \cO\subset \cO'$. Since $W(k)\subset \cO'$ is a finite extension of complete DVR's, $\cO$ is a complete DVR, finite over $W(k)$. Let $g:e\cZ\rightarrow \cO$ be the surjective map given by reduction modulo $\fp$. Let $A'=\cO\otimes_{W(k)}A$ and let $g_A:R\rightarrow A'$ be the base change to $A$ of $g:e\cZ\rightarrow \cO$. Define $U\in \Rep_{\cO}(G)$, $U_A\in \Rep_{A'}(G)$ by
\begin{align*}
U:= e(\cInd\psi)\otimes_{e\cZ,g}\cO=\frac{e\cInd\psi}{\fp(e\cInd\psi)}&&
U_A:= e(\cInd\psi_A)\otimes_{R,g_A}A'=\frac{e\cInd\psi_A}{\fp(e\cInd\psi_A)}.
\end{align*}
Note that $U_A=U\otimes_{W(k)}A$.

Consider the maps $p:e(\cInd\psi)\rightarrow U$ and $p_A:e(\cInd\psi_A)\rightarrow U_A$ given by reduction modulo $\fp$. Then we have $p_A(eH)\neq 0$ by construction. Let $U_{A}^{\vee}$ be the smooth $A'$-linear dual of $U_{A}$ and $U^{\vee}$ be the smooth $\cO$-linear dual of $U$. Since $p_A(eH)\neq 0$ we can choose $v_{A}^{\vee}\in U_A^{\vee}$ such that $\langle v_{A}^{\vee},p_A(eH)\rangle\neq 0$ in $A'$.

Recall that each block of $\Rep_{W(k)}(G)$ corresponds to a primitive idempotent $e_{[L,\pi]}$, where $L$ is a standard Levi subgroup and $\pi$ is a supercuspidal $k[L]$-module, and $e_{[L,\pi]}$ projects any object $V$ onto its largest direct summand living in that block. The contragredient $\pi^{\vee}$ is also supercuspidal. We define $e_{[L,\pi]}^*:=e_{[L,\pi^{\vee}]}$.

\begin{lemma}
\label{bernsteinandduality}
Let $e$ be a primitive idempotent of $\cZ$. Then
\begin{itemize}
\item[(i)] for any $V\in \Rep_{W(k)}(G)$, $(eV)^{\vee} = e^*V^{\vee}$
\item[(ii)] given $\theta\in \cInd_N^G\psi$ and $\eta\in\Ind_N^G\psi^{-1}$, we have
$\langle e\theta,\eta\rangle = \langle \theta, e^*\eta\rangle$.
\end{itemize}
\end{lemma}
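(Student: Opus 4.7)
The strategy is to deduce (ii) from (i), so the real work lies in (i). The core input is that smooth contragredient $(-)^{\vee}$ on $\Rep_{W(k)}(G)$ is an exact contravariant functor which permutes the blocks of the Bernstein decomposition via $[L,\pi]\leftrightarrow[L,\pi^{\vee}]$. Over a characteristic zero field this is classical \cite{bd}; over $W(k)$ I would deduce it from Helm's description of blocks via mod-$\ell$ inertial supercuspidal support in \cite{h_bern}, combined with the facts that smooth duality sends supercuspidals to supercuspidals (swapping $\pi$ and $\pi^{\vee}$), preserves inertial equivalence classes, and is compatible with reduction modulo $\ell$.

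Given the block-permutation property, (i) is essentially formal. Exactness of $(-)^{\vee}$ applied to the decomposition $V = eV \oplus (1-e)V$ gives $V^{\vee} = (eV)^{\vee} \oplus ((1-e)V)^{\vee}$. The summand $(eV)^{\vee}$ lies in block $e^{*}$ by the block-permutation property, while $((1-e)V)^{\vee}$ is annihilated by $e^{*}$ for the same reason. This identifies $e^{*}V^{\vee} = (eV)^{\vee}$.

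For (ii), I would use that the integration pairing is $G$-invariant, hence induces $G$-equivariant (so $\cZ$-equivariant) injections
\[
\iota\colon \Ind_N^G\psi^{-1}\hookrightarrow(\cInd_N^G\psi)^{\vee},\quad \eta\mapsto\langle -,\eta\rangle,\qquad \alpha\colon \cInd_N^G\psi\to(\Ind_N^G\psi^{-1})^{\vee}.
\]
Applying (i) to $V=\cInd_N^G\psi$ identifies $e^{*}(\cInd_N^G\psi)^{\vee}$ with $(e\cInd_N^G\psi)^{\vee}$, i.e.\ with the functionals that vanish on $(1-e)\cInd_N^G\psi$. Since $\iota$ is $\cZ$-equivariant, $\iota(e^{*}\eta)=e^{*}\iota(\eta)$ lies in this subspace, so $\langle(1-e)\theta,e^{*}\eta\rangle=0$ and $\langle\theta,e^{*}\eta\rangle=\langle e\theta,e^{*}\eta\rangle$. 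Symmetrically, applying (i) to $V=\Ind_N^G\psi^{-1}$ (using $(e^{*})^{*}=e$) through $\alpha$ yields $\langle e\theta,\eta\rangle=\langle e\theta,e^{*}\eta\rangle$. Chaining these two equalities gives $\langle e\theta,\eta\rangle=\langle\theta,e^{*}\eta\rangle$.

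The main obstacle is justifying the block-permutation property in the $W(k)$-setting, rather than just over $\cK$: one must check that the characterization of blocks via mod-$\ell$ inertial supports is compatible with smooth duality, so that the involution $\pi\mapsto\pi^{\vee}$ on supercuspidals descends to the involution $e\mapsto e^{*}$ on the idempotents of $\cZ$. Once this is in place, both parts of the lemma follow from the formal manipulations above.
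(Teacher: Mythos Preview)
Your proposal is correct and follows essentially the same approach as the paper: for (i) both arguments reduce to the fact that smooth duality carries the block $e_{[L,\pi]}$ to $e_{[L,\pi^{\vee}]}$, which the paper justifies (as you anticipate) via the characterization of blocks by mod-$\ell$ inertial supercuspidal support of simple subquotients together with duality for parabolic induction. For (ii) the paper simply notes that the pairing gives a $G$-equivariant isomorphism $\Ind_N^G\psi^{-1}\isomto(\cInd_N^G\psi)^{\vee}$ and then applies (i); your slightly longer route through $\langle e\theta,e^*\eta\rangle$ via two $\cZ$-equivariant maps is just an unpacking of the same computation.
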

\begin{proof}
By definition, $e^*V^{\vee}$ (resp. $eV$) is the largest direct summand of $V^{\vee}$ (resp. of $V$) all of whose simple $W(k)[G]$-subquotients have mod-$\ell$ inertial supercuspidal support isomorphic to $(L,\pi^{\vee})$ (resp. $(L,\pi)$). All the simple subquotients of $(eV)^{\vee}$ occur as the duals of simple subquotients of $eV$. Thus by the duality theorem for parabolic induction, the simple subquotients of $(eV)^{\vee}$ have supercuspidal support $(L,\pi^{\vee})$. Since $(eV)^{\vee}$ a direct summand of $V^{\vee}$, and it is the largest with this property, we have $(eV)^{\vee}=e^*V^{\vee}$.

To prove the second part, recall that the pairing $\langle , \rangle$ on $\cInd\psi\times \Ind\psi^{-1}$ induces a $G$-equivariant isomorphism $\Ind\psi^{-1}\isomto (\cInd\psi)^{\vee}$, and therefore an isomorphism $e^*\Ind\psi^{-1}\isomto e^*(\cInd\psi)^{\vee} = (e\cInd\psi)^{\vee}$.
\end{proof}

We identify $e^*\Ind\psi_{A'}^{-1}$ with the $A'$-linear dual of $e(\cInd\psi_{A})$ and identify $e^*\Ind\psi_{\cO}^{-1}$ with the $\cO$-linear dual of $e(\cInd\psi)$. We formulate:
\begin{lemma}
The following diagram commutes:
$$\begin{array}[c]{ccc}
\Hom_{e\cZ[G]}(e\cInd\psi,U)\otimes_{W(k)}A&\longrightarrow&\Hom_{R[G]}(e\cInd\psi_{A},U_{A})\\
\downarrow&&\downarrow\\
\Hom_{\cO[G]}(U^{\vee},e^*\Ind\psi_{\cO}^{-1})\otimes_{W(k)}A&\longrightarrow&\Hom_{A'[G]}((U_{A})^{\vee}, e^*\Ind\psi_{A'}^{-1})
\end{array}$$
\end{lemma}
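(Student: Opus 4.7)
The plan is to chase a pure tensor $\phi \otimes a$ from the upper-left corner around both paths of the square and verify that the two resulting $A'[G]$-equivariant maps $(U_{A})^{\vee}\to e^*\Ind\psi_{A'}^{-1}$ coincide.

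First I would make the four arrows explicit. The horizontal arrows are the natural base-change-of-$\Hom$ maps; the top sends $\phi\otimes a$ to the $R[G]$-morphism $\phi_{A}:\theta\otimes b\mapsto \phi(\theta)\otimes ab$, and the bottom has the same shape. The vertical arrows are the transposition-of-maps isomorphisms supplied by Lemma \ref{bernsteinandduality}: $\phi^{\vee}$ is characterized by $\langle \theta,\, \phi^{\vee}(w^{\vee})\rangle = w^{\vee}(\phi(\theta))$ for $\theta\in e\cInd\psi$ and $w^{\vee}\in U^{\vee}$, with the right-hand vertical arrow being the analogous construction over $A'$.

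Second, I would recognise both compositions as the transpose of $\phi_{A}$ with respect to the pairing between $e\cInd\psi_{A}$ and $e^*\Ind\psi_{A'}^{-1}$ obtained by base-changing the $W(k)$-linear pairing of Lemma \ref{bernsteinandduality}(ii). Going right-then-down produces $(\phi_{A})^{\vee}$ directly from the definition. Going down-then-right produces $(\phi^{\vee})_{A}$, and evaluating the pairing on a decomposable element $\theta\otimes b$ reduces, by $W(k)$-bilinearity and compatibility of the pairing with base change, to the same defining identity as $(\phi_{A})^{\vee}$. The two maps therefore coincide on all of $(U_{A})^{\vee}$.

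The main subtlety is the implicit identification $(U_{A})^{\vee}\cong U^{\vee}\otimes_{W(k)}A$ used to make sense of $(\phi^{\vee})_{A}$ in the lower-right corner; this uses admissibility of $U$ together with the flatness of $A$ over $W(k)$ supplied by $\ell$-torsion freeness in the hypotheses of Theorem \ref{converse}, so that on each space of $K$-fixed vectors smooth duality commutes with tensoring by $A$. Once this naturality is in place, commutativity of the square is a purely formal consequence of the bilinearity of the pairing, and no further input is needed.
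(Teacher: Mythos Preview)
Your approach is correct and essentially the same as the paper's: both arguments make the four arrows explicit and then perform a diagram chase on a pure tensor, reducing commutativity to an equality of evaluations of the form $u^{\vee}(\phi(h))\otimes abc$. The paper carries this out by a direct element-by-element computation on $\phi_*(u^{\vee}\otimes b)(h\otimes c)$ versus $(\phi_*(u^{\vee})\otimes ab)(h\otimes c)$, whereas you phrase the same identity in terms of the transpose with respect to the pairing of Lemma~\ref{bernsteinandduality}; these are the same verification. Your explicit remark that the identification $(U_{A})^{\vee}\cong U^{\vee}\otimes_{W(k)}A$ relies on admissibility of $U$ and flatness of $A$ over $W(k)$ is a useful clarification that the paper simply asserts without justification.
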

\begin{proof}
Since $U^{\vee}\otimes_{\cO}A=U_{A}^{\vee}$ and $(e\cInd\psi_{\cO})\otimes_{W(k)}A=e\cInd\psi_{A'}$, the horizontal arrows are maps of $A$-modules given by sending $\phi\otimes 1$ to the map $[h\otimes a\mapsto \phi(h)\otimes a]$. The top horizontal map is injective because $U$ is finitely generated over $e\cZ[G]$. The downward arrows are defined by $\phi \mapsto \phi_*$ where $\phi_*$ takes a map to its precomposition with $\phi$.

We now show commutativity:
$$\begin{array}[c]{ccc}
p\otimes a&\longrightarrow&[\phi:h\otimes b\mapsto p(h)\otimes ab]\\
\downarrow&&\downarrow\scriptstyle{?}\\
p_*\otimes a&\longrightarrow&[u^{\vee}\otimes b\mapsto p_*(u^{\vee})\otimes ab].
\end{array}$$
We must check that $\phi_*(u^{\vee}\otimes b)$ and $p_*(u^{\vee})\otimes ab$ are equal as elements of $(e'\cInd\psi_A)^{\vee} \cong (e'\cInd\psi)\otimes A$. But given $h\in e'\cInd\psi$ and $c$ in $A$ we have $\phi_*(u^{\vee}\otimes b)(h\otimes c) = (u^{\vee}\otimes b)(p(h)\otimes ac) = u^{\vee}(p(h))\otimes abc$.
On the other hand we have $(p_*(u^{\vee})\otimes ab)(h\otimes c) = u^{\vee}(p(h))\otimes abc$, as desired.
\end{proof}

The map $p_A \in \Hom_{R[G]}(e\cInd\psi_A,U_A)$ is in the image of the top horizontal map since it is the base change $p\otimes 1$. Thus $(p_A)_*$ equals $p_*\otimes 1$. Since $v_A^{\vee}$ is in $U^{\vee}\otimes_{\cO}A$ we can expand it as $v_A^{\vee} = \sum_iv_i^{\vee}\otimes a_i$ with $v_i^{\vee}\in U^{\vee}$ and $a_i\in A$. Then we have
\begin{align*}
0\neq \langle v_A^{\vee},p_A(eH)\rangle &= \langle (p_A)_*(v_A^{\vee}),eH\rangle\\
&=\langle (p_*\otimes 1)(v_A^{\vee}),eH\rangle\\
&=\langle (p_*\otimes 1)(\sum_iv_i^{\vee}\otimes a_i),eH\rangle\\
&=\langle \sum_ip_*(v_i^{\vee})\otimes a_i, eH\rangle\\
&=\sum_i a_i\langle p_*(v_i^{\vee}),eH\rangle
\end{align*}
This implies that not all the terms $\langle p_*(v_i^{\vee}),eH\rangle$ are zero. Therefore $\langle p_*(v_i^{\vee}),eH\rangle\neq 0$ for some $i$. Since $p_*:U^{\vee}\rightarrow e^*\Ind\psi_{\cO}^{-1}$ is $\cO[G]$-linear, it is a (the) map to the Whittaker space of $U^{\vee}$, so $p_*(v_i^{\vee})$ defines an element of $\cW(U^{\vee},\psi^{-1})$. $U$ is a co-Whittaker $\cO[G]$-module by Proposition \ref{dominance}, as it equals $e(\cInd\psi)\otimes_{e\cZ,g}\cO$. By Lemma \ref{bernsteinandduality} we conclude that $\langle p_*(v_i^{\vee}),eH\rangle = \langle e^*p_*(v_i^{\vee}),H\rangle =\langle p_*(v_i^{\vee}),H\rangle$ is nonzero.

To show that $U$ satisfies all the requirements of Theorem \ref{rikka2}, and $p_*(v_i^{\vee})$ is the required Whittaker function, the only thing left to check is that $U$ is absolutely irreducible after inverting $\ell$. If $\varpi$ is a uniformizer of $\cO$, the fact that $e(\cInd\psi)\otimes_{e\cZ}\kappa(\fp)$ is absolutely irreducible precisely means $U[\frac{1}{\varpi}]$ is absolutely irreducible, which is true by construction. The map $U\rightarrow U[\frac{1}{\varpi}]$ is an embedding because both $\cO$ and $e(\cInd\psi)$ are $\ell$-torsion free. 

Hence the $W(k)[G]$-module $U$ and the Whittaker function $p_*(v_i^{\vee})$ satisfy the conclusion of Theorem \ref{rikka2}.

\bibliography{mybibliography}{}
\bibliographystyle{alpha}
\end{document}